\pgfplotsset{compat=newest}
\newcommand{\X}{\mathcal X}
\newcommand{\Y}{\mathcal Y}
\newcommand{\Rset}{\mathbb R}
\newcommand{\ind}{\mathbf 1}
\newcommand{\gdag}{Tf}
\newcommand{\Tr}{\mathrm{Trace}}
\newcommand{\E}[1]{\mathbb E \left[#1\right]}
\newcommand{\sE}[1]{\mathbb E [#1]}
\newcommand{\Var}[1]{\mathrm{Var} \left[#1\right]}
\newcommand{\Prob}[1]{{\mathbb P}\left\{ #1 \right\}}
\DeclareMathOperator*{\argmin}{argmin}
\newlength\fheight \newlength\fwidth
\newcommand{\normX}[1]{\left\|{#1}\right\|_{\X}}
\newcommand{\normY}[1]{\left\|{#1}\right\|_{\Y}}
\newcommand{\snormX}[1]{\|{#1}\|_{\X}}
\newcommand{\snormY}[1]{\|{#1}\|_{\Y}}
\newcommand{\abs}[1]{\left|{#1}\right|}
\newcommand{\inner}[2]{\langle#1,#2\rangle}
\definecolor{mycolor1}{rgb}{0.00000,0.75000,0.75000}
\begin{document}
\newtheorem{theorem}{Theorem}[section]
\newtheorem{proposition}[theorem]{Proposition}
\newtheorem{lemma}[theorem]{Lemma}
\newtheorem{corollary}[theorem]{Corollary}

\newtheorem{definition}{Definition}
\newtheorem{assumption}{Assumption}
\newtheorem{remark}{Remark}
\newtheorem{example}{Example}

\begin{center}
\begin{minipage}{.8\textwidth}
\centering 
\LARGE Empirical Risk Minimization as Parameter Choice Rule for General Linear Regularization Methods\\[0.5cm]

\normalsize
\textsc{Housen Li}\\[0.1cm]
\verb+housen.li@mathematik.uni-goettingen.de+\\
University of Goettingen, Germany.\\[0.1cm]

\textsc{Frank Werner}\footnotemark[1]\\[0.1cm]
\verb+frank.werner@mpibpc.mpg.de+\\
Max Planck Institute for Biophysical Chemistry, Goettingen, Germany\\
and\\
Felix Bernstein Institute for Mathematical Statistics in the Bioscience, University of Goettingen, Germany
\end{minipage}
\end{center}

\footnotetext[1]{Corresponding author}

\begin{abstract}
We consider the statistical inverse problem to recover $f$ from noisy measurements $Y = Tf + \sigma \xi$ where $\xi$ is Gaussian white noise and $T$ a compact operator between Hilbert spaces. Considering general reconstruction methods of the form $\hat f_\alpha = q_\alpha \left(T^*T\right)T^*Y$ with an ordered filter $q_\alpha$, we investigate the choice of the regularization parameter $\alpha$ by minimizing an unbiased estimate of the predictive risk $\sE{\Vert Tf - T\hat f_\alpha\Vert^2}$. The corresponding parameter $\alpha_{\mathrm{pred}}$ and its usage are well-known in the literature, but oracle inequalities and optimality results in this general setting are unknown.
We prove a (generalized) oracle inequality, which relates the direct risk $\sE{\Vert f - \hat f_{\alpha_{\mathrm{pred}}}\Vert^2}$ with the oracle prediction risk $\inf_{\alpha>0}\sE{\Vert Tf - T\hat f_{\alpha}\Vert^2}$. From this oracle inequality we are then able to conclude that the investigated parameter choice rule is of optimal order in the minimax sense.

Finally we also present numerical simulations, which support the order optimality of the method and the quality of the parameter choice in finite sample situations.
\end{abstract}

\textit{Keywords:} statistical inverse problem, regularization method, filter-based inversion, a-posteriori parameter choice rule, order optimality, exponential bounds, oracle inequality.\\[0.1cm]

\textit{AMS classification numbers:} Primary 62G05; Secondary 62G20, 65J22, 65J20. \\[0.3cm]

\section{Introduction}
Suppose we want to recover an unknown function $f \in \X$ from noisy measurements
\begin{equation}\label{eq:model}
Y = Tf + \sigma \xi
\end{equation}
where $T : \X \to \Y$ is an operator between Hilbert spaces $\X$ and $\Y$, $\xi$ is a standard Gaussian white noise process and $\sigma>0$ denotes the noise level. In fact, model~\eqref{eq:model} has to be understood in a weak sense as $\xi \notin \Y$, i.e. for each $y \in \Y$ we have access to observations of the form
\[
Y_y \coloneqq \left\langle Tf, y\right\rangle_{\Y} + \sigma \left\langle \xi, y\right\rangle_{\Y^* \times \Y}
\]
where $\left\langle \xi, y\right\rangle_{\Y^* \times \Y} \sim \mathcal N \bigl(0, \snormY{y}^2\bigr)$ and $\sE{\left\langle \xi, y_1\right\rangle_{\Y^* \times \Y}\left\langle \xi, y_2\right\rangle_{\Y^* \times \Y}} = \left\langle y_1, y_2\right\rangle_{\Y}$ for all $y_1, y_2 \in \Y$. Models of the form \eqref{eq:model} underly a plenitude of applications, see e.g. \citet{os86}, and have been considered by \citet{mp01}, \citet{bhmr07}, \citet{iss12}, \citet{ilm14} and \citet{w15}. 

Throughout the paper we will assume that the operator $T$ is injective, compact and Hilbert-Schmidt, i.e.~the squares of its singular values are summable. Especially, this implies that its singular values tend to $0$ and hence the inversion of $T$ is unstable, i.e.~the problem to recover $f$ from \eqref{eq:model} is ill-posed and regularization is needed, see \citet{c11}, \citet{ehn96} and the references therein. In the literature many different approaches for the estimation of $f$ can be found, including methods based on a singular value decomposition (SVD) of $T$ \citep[see e.g.][]{js91,mr96,jkpr04,cg06}, wavelet-vaguelette \citep{d95} and vaguelette-wavelet methods \citep{as98}, and Galerkin-type methods \citep{chr04}. 

In this paper we follow a common paradigm and consider regularization methods written in terms of an \emph{ordered filter} $q_{\alpha}: \left[0, \left\Vert T^*T\right\Vert \right] \to \Rset$ parametrized by $\alpha>0$ (see e.g. Definition \ref{def:order_filter} for the precise requirements on $q_\alpha$), meaning that the regularized solution is given by
\begin{equation}\label{eq:estimator}
\hat f_\alpha \coloneqq q_\alpha\left(T^*T\right)T^*Y.
\end{equation}
Regularization methods of the form \eqref{eq:estimator} include famous examples like spectral cut-off and Tikhonov regularization, 
and have been studied extensively in the literature, see \citet{ehn96} and the references therein, and \citet{bhmr07}. 

Choosing an appropriate parameter $\alpha$ in \eqref{eq:estimator} is an important problem in regularization theory as it dramatically influences the performance of the estimator $\hat f_\alpha$. \citet{bhmr07} show that estimators of the form \eqref{eq:estimator} are order-optimal over certain smoothness classes $\mathcal W \subset \X$, if the parameter $\alpha = \alpha_*$ is chosen in a reasonable a-priori way (depending on properties of $\mathcal W$). This means that $\hat f_{\alpha_*}$ achieves the best possible rate of convergence w.r.t. the \textit{direct risk} $R \left(\alpha, f\right) \coloneqq \sE{\snormX{ \hat f_\alpha - f }^2}$ in these classes. In practice, the parameter $\alpha$ has to be chosen without any knowledge of $f$ (and hence of $\mathcal W$), which makes a-priori parameter choice rules useless. Therefore, \textit{a-posteriori} parameter choice rules are of interest, as they make only use of the data $Y$ and the noise level $\sigma>0$. As a-posteriori parameter choice rules have to adapt to the unknown properties of $\mathcal W$ (and hence of $f$), this issue is also known as \textit{adaptivity}. For simplicity, we will assume here that $\sigma$ is known. In practice, the parameter $\sigma$ can typically be estimated sufficiently fast from the data or the measurement setting~\citep[see e.g.][]{r84,hkt90,dmw98}. We will discuss this situation in more detail in Section~\ref{s:con_out}. 

A variety of a-posteriori parameter choice rules have been proposed in the literature, including the discrepancy principle \citep{m66,da86,l95,bhr16}, generalized cross-validation \citep{w77, ghw79,l93}, the Lepski{\u\i}-type balancing principle \citep{l90,m06,mp06,wh12} and many more. We refer to \citet{bl11} for a recent overview and numerical comparison. General adaptivity in statistical inverse problems has also been treated in \citet{g99}, \citet{t00} and \citet{cglt03}. 

In this paper we deal with a specific method based on empirical risk minimization originally introduced by \citet{m73} for model selection in linear regression and therefore known as \textit{Mallow's $C_L$}. Consider the \textit{prediction risk} $r \left(\alpha, f\right) \coloneqq \sE{\Vert T (\hat f_\alpha - f)\Vert_{\Y}^2}$. Following \citet{s81}, we find that an (up to a constant independent of $\alpha$) unbiased estimator for this quantity is given by
\begin{equation}\label{eq:risk_estimator}
\hat{r}(\alpha, Y) \coloneqq \Vert T\hat{f}_{\alpha} \Vert_{\Y}^2 - 2 \inner{Y}{T \hat{f}_{\alpha}}_{\Y^* \times \Y} + 2\sigma^2 \Tr  \left( T^*T q_\alpha \left(T^*T\right) \right).
\end{equation}
Now the idea is to choose $\alpha$ as a minimizer of $\hat r \left(\alpha,Y\right)$, i.e.
\[
\alpha_{\mathrm{pred}}  \in \argmin_{\alpha>0} \hat r \left(\alpha, Y\right).
\]
Note that the functional \eqref{eq:risk_estimator} penalizes the misfit between the model $\hat f_\alpha$ and the data $Y$, and furthermore $2\sigma^2 \Tr  \left( T^*T q_\alpha \left(T^*T\right) \right)$ penalizes the number of degrees of freedom of the model. We refer to \citet{bm01,bm07} for details and a discussion of other possible penalty terms in \eqref{eq:risk_estimator}. 

It is known that choosing $\alpha = \alpha_{\mathrm{pred}}$ in combination with certain regularization schemes leads to an order optimal method w.r.t. the prediction risk $r\left(\alpha, f\right)$, see e.g. \citet{l87}, \citet{v86} and \citet{l93}. A very precise result about its performance,  which is also a central ingredient of this paper, can be found in the seminal paper by \citet{k94}, who proves exponential deviation bounds for $r\left(\alpha_{\mathrm{pred}},f\right)$. 

Due to ill-posedness, optimality w.r.t. the prediction risk is however a very weak statement, and consequently we are interested in order optimality w.r.t. the direct risk. This question has hardly been touched in {the} literature, and to the authors' best knowledge the only result is due to \citet{cg14} who restrict to finite dimensional spaces and spectral cut-off regularization. Nevertheless, the choice $\alpha = \alpha_{\mathrm{pred}}$ has successfully been applied in image denoising applications \citep[see e.g.][]{lbu07,cslt13,wm13,dvfp14}. Moreover, the distributional behavior of $\alpha_{\mathrm{pred}}$ has recently been studied by \citet{lpbbbdw17}. There it has been argued that the choice $\alpha_{\mathrm{pred}}$ and choices based on unbiased risk minimization in general do not seem suitable for inverse problems. Besides, it remains one of the most popular parameter selection rules, due to its favorable practical performance \citep[cf.][]{bl11,cg14}. In this spirit, we will prove an \textit{oracle inequality} of the form 
\begin{equation}\label{eq:goie}
R\left(\bar \alpha,f\right) \leq \Theta \left( \min_{\alpha>0} r\left(\alpha, f\right)\right)
\end{equation}
for all $f \in \mathcal W$ as $\sigma \searrow 0$ where $\Theta$ is some (explicit) functional and $\mathcal W \subset \X$ some smoothness class. More specifically, the functional $\Theta$ will be such that we can derive order optimality of $\hat f_{\alpha_{\mathrm{pred}}}$ under weak assumptions over many classes $\mathcal W \subset \X$. Moreover, we provide a general strategy to prove estimates of the form \eqref{eq:goie} which is of independent interest and might be used for the analysis of other a-posteriori parameter choice rules as well. This makes our analytical methodology substantially different from that in \citet{cg14}, since the crucial techniques (such as optional stopping of martingales) there do not apply to general regularization schemes. For more details on oracle inequalities in statistical inverse problems we refer to \citet{cgpt02} and \citet{bhr16}.

The rest of this paper is organized as follows. In the next section we introduce ordered filter based regularization methods and the empirical parameter choice rule via unbiased prediction risk minimization. The convergence analysis of such a rule is established by means of an oracle inequality in Section~\ref{s:theory} with corresponding conclusions on order optimality. In Section~\ref{sec:proofs} we present a general methodology for proving oracle inequalities of the form \eqref{eq:goie} and apply this methodology for proving the results from Section~\ref{s:theory}. The performance, as well as the convergence behavior, of the analyzed parameter choice rule is examined by comparison with other empirical parameter choice rules in a simulation study in Section~\ref{sec:numerics}. We end this paper with some conclusions in Section~\ref{s:con_out}. {Some technical details are deferred to the Appendix.}

\section{Filter based regularization and empirical risk minimization}\label{sec:filter_minRisk}

\subsection{Ordered filter based regularization methods}\label{ss:order_filter}

Suppose that $\X$ and $\Y$ are Hilbert spaces, $T : \X \to \Y$ is an injective and compact Hilbert-Schmidt operator, and $\xi$ in \eqref{eq:model} is a standard Gaussian white noise process as described in the Introduction. To simplify the notation we will always assume that $\dim\left(\X\right) = \infty$, but stress that the case of finite dimensional $\X$ (and $\Y$) can be treated similarly. By assumption, there exists a singular value decomposition (SVD) $\left\{ \left(\sqrt{\lambda_k},e_k, g_k\right)\right\}_{k \in \mathbb N}$ of $T$ where $\lambda_1 \ge \lambda_2 \ge  \cdots > 0$ are the eigenvalues of $T^*T$, $e_1, e_2, \ldots$ are the corresponding normalized eigenvectors, and $g_k = \lambda_k^{-1/2}T e_k$ for $k = 1,2,\ldots$. By introducing the notation $Y_k \coloneqq \inner{g_k}{Y}$, $\xi_k \coloneqq \inner{g_k}{\xi}$ and $f_k = \inner{f}{e_k}$, we equivalently transform the model~\eqref{eq:model} to the Gaussian sequence model
\begin{equation}\label{eq:seq_model}
Y_k = \sqrt{\lambda_k}f_k + \sigma \xi_k, \qquad k = 1, 2, \ldots, {\qquad \text{with}\qquad \xi_k \stackrel{\text{i.i.d.}}{\sim} \mathcal N \left(0,1\right)}.
\end{equation}
As mentioned in the Introduction, we focus on regularization methods of the form \eqref{eq:estimator}, which by means of \eqref{eq:seq_model} can be equivalently formulated as 
\begin{equation}\label{eq:seq_est}
\left(\hat{f}_{\alpha}\right)_k = \sqrt{\lambda_k}q_{\alpha}(\lambda_k)Y_k, \qquad k \in \mathbb N. 
\end{equation}
\begin{remark}
Note that $T$ being Hilbert-Schmidt implies that $\sum_{k=1}^\infty \lambda_k < \infty$, and hence $T^*Y$ can be interpreted as a random variable with values in $\X$ as
\[
\E{\normX{T^*\xi}^2} = \E{\sum_{k=1}^\infty \left\langle \xi, T e_k\right\rangle_{\Y}^2} = \sum_{k=1}^\infty \lambda_k \E{\inner{\xi}{g_k}_{\Y}^2}< \infty.
\]
Consequently, $\hat f_\alpha$ as in \eqref{eq:estimator} is well-defined. 
\end{remark}
Estimators of the form~\eqref{eq:estimator} or~\eqref{eq:seq_est} can be understood as stable approximations of the well-known least squares estimate $\hat{f}\coloneqq(T^*T)^{-1}T^*Y$ (or $\hat f_k = Y_k/\sqrt{\lambda_k}$) in the sense of replacing $(\cdot)^{-1}$ with a function $q_{\alpha}(\cdot)$. To obtain a well-defined and reasonable regularization method, the functions $q_{\alpha}(\cdot)$ should satisfy proper conditions. We are particularly interested in case that $q_{\alpha}(\cdot)$ is an ordered filter. 

\begin{definition}\label{def:order_filter}
Let $q_{\alpha}: [0, \lambda_1] \to \Rset$, indexed by $\alpha \in \mathcal{A} \subset \Rset_+$, be a sequence of functions. We always assume that $\mathcal A$ is bounded {and closed, equipped with the subspace topology inherited from $\Rset_+$, and that $0 \in \mathcal A$.}
\begin{enumerate}
\item
The family $q_\alpha, \alpha \in \mathcal A$, is called a \textit{filter}, if there exist constants $C_q', C_q'' >0$ such that for every $\alpha \in \mathcal{A}$ and every $\lambda \in  [0, \lambda_1] $ 
\[
\alpha \abs{q_{\alpha}(\lambda)} \le C_q' \qquad \text{ and } \qquad \lambda \abs{q_{\alpha}(\lambda)} \le C_q''.
\] 
\item
The filter $q_\alpha, \alpha \in \mathcal A$, is called \textit{ordered}, if further the sequence $\{q_{\alpha}(\lambda_k)\}_{k=1}^{\infty}$ is strictly monotone, i.e.
\[
{\alpha_1 > \alpha_2} \qquad\Rightarrow \qquad \forall~k\in \mathbb N: q_{\alpha_1}(\lambda_k) \le q_{\alpha_2}(\lambda_k)\quad\text{and}\quad\exists~k_0 \in \mathbb N: q_{\alpha_1}(\lambda_{k_0}) < q_{\alpha_2}(\lambda_{k_0}),
\]
and continuous as $\mathcal A \ni \alpha \mapsto \{q_{\alpha}(\lambda_k)\}_{k=1}^{\infty} \in \ell^2$.
\end{enumerate}
\end{definition}
The requirement of an ordered filter is rather weak, as it is satisfied by various regularization methods. In Table~\ref{tab:example} we give several examples of such. Note that for spectral cut-off regularization, the set $\mathcal{A}$ has to be chosen as $\{\lambda_k: k = 1, 2, \ldots\}\cup\{0\}$ in order to guarantee the strict monotonicity and the continuity required by condition (ii) in Definition~\ref{def:order_filter}. In Table~\ref{tab:example} we also indicate whether the method can be implemented without SVD. This property is extremely crucial in practice, especially for large-scale applications, where the computation of an SVD is often impossible given limited time and resources. The implementation of Showalter's method, for instance, can avoid SVD by employing Runge-Kutta schemes, see e.g.~\citet{r05}. For a further discussion of these and other methods we refer to the monograph by~\citet{ehn96}.

In this paper, we focus on the asymptotic properties of ordered filter based regularization methods as the noise level $\sigma$ goes to zero. As noticed by \citet{b84}, the convergence rate of {any} regularization method can be arbitrarily slow if the underlying problem {is} ill-posed. In order to derive convergence rates we need {to} assume some smoothness about the unknown truth $f$. Typically, the smoothness of $f$ is measured relative to the smoothing properties of the forward operator $T$ in terms of a \emph{source condition}, i.e.~we assume that
\begin{equation}\label{eq:sc_general}
f \in \mathcal{W}_{\phi}{(\rho)} \coloneqq \left\{f \in \X : f = \phi(T^*T)w, \normX{w} \le {\rho} \right\}\qquad \text{ for some constant } {\rho}, 
\end{equation}
where {$\phi: \Rset_+ \to \Rset_+$} is a so-called \emph{index function}, i.e.~$\phi$ is continuous, strictly increasing, and $\phi(0) = 0$. For any $f \in \X$ there exist a function $\widetilde \phi$ and a constant $\tilde\rho$ such that $f \in \mathcal W_{\widetilde \phi}(\tilde\rho)$, cf.~\citet{mh08}.

To take advantage of~\eqref{eq:sc_general} we furthermore assume that $\phi$ is a \emph{qualification} of the filter $q_\alpha$, this is
\begin{equation}\label{eq:qc_general}
\sup_{\lambda \in [0, \lambda_1]} \phi(\lambda) \abs{1 - \lambda q_{\alpha}(\lambda)} \le C_{\phi} \phi(\alpha)  \qquad \text{ for all } \alpha \in \mathcal{A},
\end{equation}  
with $C_{\phi}$ being a constant depending only on $\phi$. For further details on general source conditions and corresponding qualifications we refer to \citet{mp03}. 
As an example consider $\phi(t) = t^{v}$, which is known as H{\"o}lder type source condition of order $v>0$: 
\begin{equation}\label{eq:sc_hoelder}
\mathcal{W}_{v}(\rho) \coloneqq \left\{f \in \X : f = (T^*T)^v w, \normX{w} \le \rho \right\}.
\end{equation}
The function $\phi(t) = t^{v}$ is a qualification of the filter $q_\alpha$ if
\begin{equation}\label{eq:qc_hoelder}
\sup_{\lambda \in [0, \lambda_1]}  \lambda^v \abs{1 - \lambda q_{\alpha}(\lambda)} \le C_v \alpha^v \qquad \text{ for all } \alpha \in \mathcal{A}.
\end{equation}
In this case, the largest possible $v$ such that \eqref{eq:qc_hoelder} is satisfied, is called the classical or polynomial \emph{qualification index} $v_0$ of the ordered filter $q_{\alpha}$. For the methods discussed in Table~\ref{tab:example}, $v_0$ as well as $C_v$ is also depicted. 

\begin{table}[!th]
\begin{center}
\caption{Examples of ordered filters.}\label{tab:example}
\renewcommand{\arraystretch}{1.7}
\begin{tabular}{ccccccc}
\toprule
Method            & $q_{\alpha}(\lambda)$  &  $C_q'$ & $C_q''$ & $v_0$ & $C_v$            & SVD required \\
\midrule
Spectral cut-off             & $\frac{1}{\lambda} \ind_{[\alpha, \infty)}(\lambda)$   & $1$  & $1$ & $\infty$ & $1$ & Yes \\
Tikhonov                       & $\frac{1}{\lambda+\alpha}$                                        & $1$       & $1$ & $1$ & $v^v(1-v)^{1-v}$ & No \\
$m$-iterated Tikhonov      & $\frac{(\lambda + \alpha)^m - \alpha^m}{\lambda(\lambda + \alpha)^m}$ & $m$ & $1$ &$m$ & $(v/m)^v(1-v/m)^{m - v}$ & No \\
Landweber ($\|T\| \le 1$)  & $\sum_{j = 0}^{\lfloor 1/\alpha \rfloor - 1} (1 - \lambda)^j$ & $1$ & $1$ & $\infty$ & $(v/e)^v$ & No \\
Showalter      			     & $\frac{1-\exp\left(-\frac{\lambda}{\alpha}\right)}{\lambda}$ & $1$  & $1$ & $\infty$ & $(v/e)^v$ & No \\
\bottomrule
\end{tabular}
\end{center}
\end{table}

For further reference, we collect the assumed properties of $f$ and $q_\alpha$ as follows:
\begin{assumption}\label{sc_conv_ass}
\begin{enumerate}
\item
The true solution $f$ satisfies $f \in \mathcal{W}_{\phi}(\rho)$ as in \eqref{eq:sc_general}.
\item 
The function $\phi$ is a qualification of the filter $q_\alpha$ as in~\eqref{eq:qc_general}.
\item\label{sc:c:3}
The function $\psi(x)\coloneqq x\phi^{-1}(\sqrt{x})$, with $\phi^{-1}$ being the inverse function of $\phi$, is convex.
\end{enumerate}
\end{assumption}
\begin{remark}
We stress that, being a standard assumption for convergence analysis, Assumption~\ref{sc_conv_ass}~(iii) actually imposes no restriction, since one can always work on a slightly larger source set $\mathcal{W}_{\widetilde\phi}$ with another index function $\widetilde{\phi}$ for which Assumption~\ref{sc_conv_ass}~(iii) is satisfied. {Note that function $\psi$ is strictly increasing, and its range is $\Rset_+$.}
\end{remark}

\subsection{Empirical prediction risk minimization}

As discussed in the Introduction, the optimal regularization parameter $\alpha$ will in general depend on $Y$, $\sigma$ and $f$, but the latter is unknown and hence this $\alpha$ cannot be realized in practice. Recall that we always assume $\sigma>0$ to be known. By means of the prediction risk $r\left(\alpha, f\right)$, the optimal $\alpha \in \mathcal A$ is given by
\begin{equation}\label{eq:alpha_o}
\alpha_o = \argmin_{\alpha \in \mathcal{A}} r(\alpha, f),
\end{equation}
{which is well-defined by similar arguments as in Appendix~\ref{app:alpha}.}
As a common remedy, we will try to estimate $\alpha_o$ from the observations $Y$ in~\eqref{eq:model} by minimizing an unbiased estimator of $r\left(\alpha,f\right)$, which can be derived as follows. 
Let us introduce the shorthand notation
\[
s_{\alpha}(\lambda) \coloneqq \lambda q_{\alpha}(\lambda),
\] 
Then we have
\begin{align}
r(\alpha,f) 
& = 
\E{\normY{T\left(q_{\alpha}(T^*T)T^*(T f + \sigma \xi) - f\right)}^2} \nonumber \\
& = \normY{(I - T q_{\alpha}(T^* T)T^* )T f}^2 + \sigma^2 \E{\normY{T q_{\alpha}(T^*T)T^*\xi}^2} \nonumber \\
&= \sum_{k=1}^{\infty} \lambda_k (1 - s_{\alpha}(\lambda_k))^2 f_k^2 + \sigma^2 \sum_{k=1}^{\infty} s_{\alpha}(\lambda_k)^2.\label{eq:pred_risk}
\end{align}
and furthermore
\begin{align*}
& \E{\snormY{T \hat{f}_{\alpha}}^2 - 2\inner{Y}{T\hat{f}_{\alpha}}_{\Y^*\times\Y}} \\
= &\, \E{\normY{T q_{\alpha}(T^*T) T^* (T f +\sigma \xi)}^2 - 2\inner{Tf+\sigma \xi}{T q_{\alpha}(T^*T) T^* (T f +\sigma \xi)}_{\Y^*\times\Y}} \\
 = &\, \sum_{k=1}^{\infty} \lambda_k (1-s_{\alpha}(\lambda_k))^2 f_k^2 - \sum_{k=1}^{\infty}\lambda_k f_k^2 + \sigma^2 \sum_{k=1}^{\infty} s_{\alpha}(\lambda_k)^2 - 2\sigma^2\sum_{k=1}^{\infty} s_{\alpha}(\lambda_k).
\end{align*}
Consequently for $\hat r\left(\alpha, Y\right)$ as in \eqref{eq:risk_estimator} we have 
\[
\E{\hat r\left(\alpha, Y\right)} = r(\alpha,f) - \sum_{k=1}^{\infty}\lambda_k f_k^2,
\]
i.e. up to a constant independent of $\alpha$, $\hat r\left(\alpha, Y\right)$ is an unbiased estimator of $r \left(\alpha, f\right)$. Hence we define
\begin{equation}\label{eq:alpha_pred} 
\alpha_{\mathrm{pred}} = \argmin_{\alpha\in\mathcal{A}} \hat{r}(\alpha, Y).
\end{equation}
{Note that $\alpha_{\mathrm{pred}}$ is measurable and almost surely well-defined, see Appendix~\ref{app:alpha} for details.}
\begin{remark}
For the clarity of our notation, we stress that $\alpha \mapsto r(\alpha, f)$ is a deterministic function, whereas $\alpha \mapsto \hat r \left(\alpha, Y\right)$ is a random function and consequently, ${\alpha_{\mathrm{pred}}\equiv\alpha_{\mathrm{pred}}(Y)}$ as in {\eqref{eq:alpha_pred}} is a random variable. {By $\E{\cdot}$ we always denote the expectation with respect to the data $Y$ or equivalently the noise $\xi$.} To obtain bounds for the estimator $\hat f_{\alpha_{\mathrm{pred}}}$ we will also need bounds for $r(\alpha_{\mathrm{pred}}, f)$, i.e. a deterministic function evaluated at a random variable. In particular, we stress that $\E{r(\alpha_{\mathrm{pred}}, f)} \neq \sE{\snormY{T \hat f_{\alpha_{\mathrm{pred}}} - T f}^2}.$ 
\end{remark}

Note that $\alpha_{\mathrm{pred}}$ is computable in practice as it only relies on the data, the forward operator {and the noise level (which is assumed to be known, see Section~\ref{s:con_out} for estimated noise levels)}. As discussed in the Introduction, there are many results available for the performance of $\hat{f}_{\alpha_{\mathrm{pred}}}$ measured by the prediction risk, among which the most precise one is due to \citet{k94} given below.  

\begin{theorem}[Deviation bound of prediction risk~\citep{k94}]\label{expBnd}
Assume the model~\eqref{eq:model}. Let $\hat{f}_{\alpha} = q_{\alpha}(T^*T)T^*Y$ with an ordered filter $q_{\alpha}$, $\alpha_o$ as in \eqref{eq:alpha_o}, and $\alpha_{\mathrm{pred}}$ as in \eqref{eq:alpha_pred}. Then there exist {universal} positive constants $C_\xi'$, $C_\xi''$ such that for all $x \ge 0$ and for all $f \in \X$,
\[
\Prob{{\frac{1}{\sigma^2}}\normY{T \hat{f}_{\alpha_{\mathrm{pred}}} - Tf}^2 -{\frac{1}{\sigma^2}}\normY{T \hat{f}_{\alpha_o} - Tf}^2 \ge x} \le C_\xi' \exp\left(-{C_\xi''}\min\biggl\{\sqrt{x},\, \frac{x}{\sqrt{{{r(\alpha_o, f)}/{\sigma^2}}}}\biggr\}\right), 
\]
which remains true when replacing ${\frac{1}{\sigma^2}}\|{T \hat{f}_{\alpha_{\mathrm{pred}}} - Tf}\|_\Y^2 -{\frac{1}{\sigma^2}}\|{T \hat{f}_{\alpha_o} - Tf}\|_\Y^2$ by ${\frac{1}{\sigma^2}}r(\alpha_{\mathrm{pred}}, f) - {\frac{1}{\sigma^2}}r(\alpha_o, f)$.
\end{theorem}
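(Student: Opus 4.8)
The plan is to diagonalize the problem via the SVD and to recognize the estimator as a linear shrinkage rule in the Gaussian sequence model \eqref{eq:seq_model}. Writing $\mu_k := \sqrt{\lambda_k} f_k$ and $s_{\alpha}(\lambda_k) = \lambda_k q_{\alpha}(\lambda_k)$, the component of $T\hat f_\alpha$ along $g_k$ equals $s_{\alpha}(\lambda_k) Y_k$, so the prediction loss is $L(\alpha) := \normY{T\hat f_{\alpha} - Tf}^2 = \sum_{k} (s_{\alpha}(\lambda_k) Y_k - \mu_k)^2$ with $r(\alpha,f) = \E{L(\alpha)}$. Expanding \eqref{eq:risk_estimator} in the same coordinates and inserting $Y_k = \mu_k + \sigma \xi_k$ gives the exact identity $\hat r(\alpha,Y) = L(\alpha) - \normY{Tf}^2 + \nu(\alpha)$, where
\[
\nu(\alpha) := -2\sigma \sum_{k} s_{\alpha}(\lambda_k)\mu_k \xi_k + 2\sigma^2 \sum_{k} s_{\alpha}(\lambda_k)\,(1-\xi_k^2)
\]
is centered, $\E{\nu(\alpha)} = 0$. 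Since $\alpha_{\mathrm{pred}}$ minimizes $\hat r(\cdot, Y)$ and $-\normY{Tf}^2$ does not depend on $\alpha$, the inequality $\hat r(\alpha_{\mathrm{pred}}, Y) \le \hat r(\alpha_o, Y)$ yields at once the basic inequality $L(\alpha_{\mathrm{pred}}) - L(\alpha_o) \le \nu(\alpha_o) - \nu(\alpha_{\mathrm{pred}})$.

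All that remains is a uniform-in-$\alpha$ control of the random field $\Delta(\alpha) := \nu(\alpha_o) - \nu(\alpha)$. Setting $b_k(\alpha) := s_{\alpha}(\lambda_k) - s_{\alpha_o}(\lambda_k)$ one finds $\Delta(\alpha) = 2\sigma \sum_k b_k(\alpha)\mu_k \xi_k + 2\sigma^2 \sum_k b_k(\alpha)(\xi_k^2 - 1)$, a sum of a Gaussian linear part and a centered weighted $\chi^2$ part. Here the ordered-filter hypothesis enters decisively: because $\alpha \mapsto q_\alpha(\lambda_k)$ is monotone for every $k$, the increments $b_k(\alpha)$ are simultaneously of one sign in $k$ (nonnegative for $\alpha < \alpha_o$, nonpositive for $\alpha > \alpha_o$), so $\{s_\alpha(\lambda_k)\}_k$ is a totally ordered one-parameter family and $\Delta$ is essentially indexed by a single monotone scalar. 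I would bound the two parts separately: the Gaussian part by the Gaussian (Borell--TIS) concentration inequality, with variance $4\sigma^2 \sum_k b_k^2 \mu_k^2$; and the $\chi^2$ part by a Bernstein/Hanson--Wright bound, whose two regimes $\exp(-c\min\{t^2/(\sigma^4\sum_k b_k^2),\, t/(\sigma^2\max_k\abs{b_k})\})$ are exactly the source of the $\min\{\sqrt{x}, x/\sqrt{r(\alpha_o,f)/\sigma^2}\}$ structure.

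I would then run a peeling (slicing) argument over $\alpha$: partition its range into shells on which the deterministic risk increment $r(\alpha,f) - r(\alpha_o,f) \ge 0$ lies in dyadic bands, bound $\sup_\alpha \Delta(\alpha)$ on each shell using the concentration estimates above together with the key fact that the fluctuation scale of $\Delta(\alpha)$ is itself controlled by $r(\alpha,f)-r(\alpha_o,f)$ (and, at the oracle, by $r(\alpha_o,f)$), and take a union bound over shells, which the monotone one-parameter structure keeps summable. This produces, on an event of the asserted probability, a uniform bound $\Delta(\alpha) \le \tfrac12 (L(\alpha) - L(\alpha_o)) + R$. Inserting $\alpha = \alpha_{\mathrm{pred}}$ into the basic inequality and absorbing the $\tfrac12(L(\alpha_{\mathrm{pred}}) - L(\alpha_o))$ term (the usual self-bounding step) gives $L(\alpha_{\mathrm{pred}}) - L(\alpha_o) \le 2R$, and calibrating the shell thresholds so that this event carries probability at least $1 - C_\xi'\exp(-C_\xi''\min\{\sqrt{x}, x/\sqrt{r(\alpha_o,f)/\sigma^2}\})$ yields the first assertion. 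The second assertion, with $r(\alpha_{\mathrm{pred}},f) - r(\alpha_o,f)$ in place of the realized loss gap, follows from the same machinery after additionally showing that $\abs{L(\alpha) - r(\alpha,f)}$ is uniformly small, i.e. that the realized loss concentrates around its mean uniformly in $\alpha$.

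The main obstacle is precisely this uniform control of $\Delta(\alpha)$ with the sharp variance proxy $r(\alpha_o,f)$: one must show that the fluctuations scale not with a crude global quantity such as $\sup_\alpha \sum_k s_\alpha(\lambda_k)^2$ but with the \emph{local} risk increment, so that the self-bounding closes and the oracle risk $r(\alpha_o,f)$ appears in the exponent. This is where the monotonicity of the ordered filter is indispensable, since it reduces an a priori high-dimensional supremum to a one-parameter, single-sign problem of controlled metric entropy; it is also the step that fails to extend to non-ordered schemes.
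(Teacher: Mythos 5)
Your route differs from the paper's at the very first step, but not in the way one might hope: the paper does not prove this deviation bound from first principles at all. Since the theorem is attributed to \citet{k94}, the paper's entire proof consists of citing Kneip's Proposition 1 (i) and Theorem 1 (which give both assertions for finite-dimensional $\Y$), rescaling to expose the dependence of the constants on $\sigma$, and verifying that Kneip's two key technical tools --- his Lemma 2, an exponential inequality for quadratic forms, and his Lemma 3, the uniform bound over ordered families --- survive the passage to infinite dimensions under the Hilbert-Schmidt hypothesis. What you propose is instead a from-scratch reconstruction, and its architecture (the exact identity $\hat r(\alpha,Y)=L(\alpha)-\snormY{Tf}^2+\nu(\alpha)$, the basic inequality $L(\alpha_{\mathrm{pred}})-L(\alpha_o)\le\nu(\alpha_o)-\nu(\alpha_{\mathrm{pred}})$, the split of $\nu$ into a Gaussian linear part and a centered weighted $\chi^2$ part, the exploitation of the componentwise ordering, peeling plus self-bounding) is in fact the skeleton of Kneip's own argument; the algebra you do display is correct.

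The genuine gap is that every step where the theorem is actually hard is announced rather than executed. A fixed-$\alpha$ Bernstein/Hanson--Wright analysis yields an exponent of order $\min\{x,\,x^2\sigma^2/r(\alpha_o,f)\}$; the theorem's exponent $\min\bigl\{\sqrt{x},\,x/\sqrt{r(\alpha_o,f)/\sigma^2}\bigr\}$ is exactly the square root of this, and that degradation is the price of taking the supremum over the ordered family. Proving that the price is no worse than a square root, with constants $C_\xi',C_\xi''$ that are \emph{universal} (independent of $T$, $f$, $\sigma$ and the filter), is precisely the content of Kneip's Lemma 3 and the chaining bound for totally ordered subsets of $\ell^2$; your remark that the peeling "keeps summable" by monotonicity is a placeholder for exactly this missing lemma, so the claim that the Bernstein regimes "are exactly the source" of the exponent is not substantiated. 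In addition, three further points you pass over are nontrivial: the supremum runs over a continuum $\mathcal A$, so measurability and attainment of $\alpha_{\mathrm{pred}}$ need the care the paper delegates to its Appendix A; the infinite sums defining $\nu(\alpha)$ and the trace term must converge, which requires the Hilbert-Schmidt property and is the one point the paper's proof genuinely has to check; and the second assertion does not follow by "additionally showing that $\abs{L(\alpha)-r(\alpha,f)}$ is uniformly small" --- uniform loss-risk concentration at the required scale is itself an ordered-process statement of the same order of difficulty, and Kneip obtains the risk version by a separate argument (his Proposition 1), which is exactly why the paper cites it alongside his Theorem 1.
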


\begin{proof}
Recall that $\dim \X = \infty$, which, together with the injectivity of $T$,  implies $\dim \Y = \infty$. In fact, the assertion for finite dimensional $\Y$ follows directly from {Proposition 1 (i) and Theorem 1 in \cite{k94}} by chasing the dependency of the constants on the noise level $\sigma$. Concerning the key technical tools in Kneip's proof, we note that the Lemma~2 there actually holds for infinite sequences $a \in \ell^2$ and bounded linear trace operators $A: \ell^2 \to \ell^2$, and that the Lemma~3 there can be extended to infinite dimensional ordered linear smoothers, as long as they are Hilbert-Schmidt. Thus, the proof by~\citet{k94} carries over to the case that $\dim \Y = \infty$.
\end{proof}

The above theorem, in particular, implies \citep[cf.][Theorem 1]{cg14}
\[
\E{\normY{T \hat{f}_{\alpha_{\mathrm{pred}}} - Tf}^2} \le r(\alpha_o, f) + C \sigma \sqrt{r(\alpha_o, f)}\qquad \text{ for every }f\in \X, 
\]
which guarantees the order optimality of $\hat{f}_{\alpha_{\mathrm{pred}}}$ in terms of the prediction risk. 

\section{MISE estimates}\label{s:theory}

The section is devoted to the convergence analysis of $\hat{f}_{\alpha} = q_{\alpha}(T^*T)T^*Y$ with $\alpha = \alpha_{\mathrm{pred}}$ as in \eqref{eq:alpha_pred}. In what follows, we will prove that $\hat{f}_{\alpha_{\mathrm{pred}}}$ also possesses an optimality property in terms of the direct risk, i.e.~the mean integrated square error (MISE).

\subsection{Assumptions and merit discussions}

We start with some technical assumptions. As we have already seen in the previous section, many calculations involve summations over $\{\lambda_k\}_{k = 1}^{\infty}$, which can be formulated as Lebesgue-Stieltjes integrals with respect to 
\begin{equation*}
\Sigma(x)\coloneqq\#\{k:\lambda_k \ge x\}. 
\end{equation*}
Following~\citet{bhmr07}, we assume that $\Sigma$ can be approximated by a smooth function $S$ to avoid the difficulty caused by the non-smoothness of $\Sigma$:
\begin{assumption}\label{surrogate_ass}
\begin{enumerate}
\item
There exists a surrogate function $S\in\mathcal{C}^2\left((0,\infty)\right)$ of $\Sigma$ satisfying   
\begin{align*}
&\lim_{\alpha \searrow 0}S(\alpha) / \Sigma(\alpha) = 1,\\
& S'(\alpha) <0 \qquad \text{for all } \alpha > 0, \\
& \lim_{\alpha \nearrow \infty} S(\alpha) = \lim_{\alpha \nearrow \infty} S'(\alpha) = 0, \\
&  \lim_{\alpha \searrow 0} \alpha S(\alpha) = 0.
\end{align*}
\item
There exist constants $\alpha_1 \in (0, \lambda_1]$ and {$C_{S} > 0$ such that
\begin{align*}
\frac{1}{\alpha}\int_0^\alpha S(t) dt \le C_{S}S(\alpha) \qquad \text{ for all } \alpha \in (0, \alpha_1]. 
\end{align*}}
\item
There exists a constant $C_q>0$ such that 
\[
\int_{1}^{\infty} \Psi'(C_q x) \exp\left(-C_\xi''\sqrt{\frac{x}{2}}\right)\,\mathrm dx < \infty\qquad \text{ with } \Psi(x) \coloneqq \frac{x}{\left(S^{-1}\left(x\right)\right)^2}.
\]
Here $C_\xi''$ is as in Theorem~\ref{expBnd}.
\end{enumerate}
\end{assumption}
\begin{remark}{
Note that Assumption~\ref{surrogate_ass} (ii) holds true if there exists some $\tilde C_S \in (0,2)$ such that
\begin{align*}
-\alpha S'(\alpha) \text{ is integrable on } (0, \alpha_1], \qquad \text{ and } \qquad
\frac{S''(\alpha)}{-S'(\alpha)} \le \frac{\tilde C_{S}}{\alpha} \qquad \text{ for all } \alpha \in (0, \alpha_1],
\end{align*}
see Lemma 12 in \cite{bhmr07} for a proof. Thus, Assumption~\ref{surrogate_ass} (i) and (ii) are slightly weaker than Assumption 2} in~\cite{bhmr07} with a proper extension of $S$ to a larger domain $(0, \infty)$. We stress that these are rather weak requirements, and cover a wide range of situations~\citep[see e.g.][Section~5]{bhmr07}. The additional Assumption~\ref{surrogate_ass} (iii) is needed to control certain general moments of $r(\alpha_{\mathrm{pred}}, f)$ with the help of Theorem~\ref{expBnd}.
\end{remark}

The {next} assumption concerns the choice of regularization parameter $\alpha$ for (ordered) filters $q_{\alpha}$.
\begin{assumption}\label{para_filter_ass}
\begin{enumerate}
\item
For $\alpha \in \mathcal{A}$, the function $\lambda \mapsto s_{\alpha}(\lambda)$ is non-decreasing. 
\item
There exists $c_q > C_q^{-1/2}$ with $C_q$ as in Assumption \ref{surrogate_ass} (iii) such that 
\begin{equation}\label{eq:parametrization}
s_{\alpha}(\alpha) = \alpha q_{\alpha} (\alpha) \ge c_q \qquad \text{ as } \alpha \searrow 0.
\end{equation}
\end{enumerate}
\end{assumption}
\begin{remark}
It is easy to see that the above condition with some $c_q>0$ is satisfied by all the regularization methods in Table~\ref{tab:example}, which indicates that Assumption~\ref{para_filter_ass} is fairly general.  In particular, Assumption~\ref{para_filter_ass} (ii) can be understood as a parametrization condition. For instance, the Tikhonov method with re-parametrization $\alpha \mapsto \sqrt{\alpha}$, i.e. $q_{\alpha}(\lambda) = 1/(\sqrt{\alpha} + \lambda)$, still defines an ordered filter, but does not satisfy \eqref{eq:parametrization} anymore. The condition $c_q > C_q^{-1/2}$ requires some compatibility of the ill-posedness with the parametrization of the filter. {Note that the qualification condition~\eqref{eq:qc_general} implies that $s_{\alpha}(\alpha) \le 1$ for $\alpha > 0$, so if $C_q > 1$ then the relation $c_q > C_q^{-1/2}$ is automatically satisfied.}
\end{remark}

Under these assumptions, upper bounds for $\hat f_\alpha$ with an a-priori choice of $\alpha$ have been proven by \citet{bhmr07}:
\begin{theorem}[A-priori parameter choice~\citep{bhmr07}]\label{apriori_choice}
Consider the model~\eqref{eq:model}, and let $\hat{f}_{\alpha} \coloneqq q_{\alpha}(T^*T)T^*Y$ with a filter $q_{\alpha}$, and {suppose that Assumption~\ref{surrogate_ass} (i)-(ii) holds true}. Let also $\alpha_*$ satisfy 
\begin{equation}\label{eq:apriori_choice}
\alpha_* \phi(\alpha_*)^2 = \sigma^2 S(\alpha_*). 
\end{equation}
\begin{enumerate}
\item
If Assumption \ref{sc_conv_ass} (ii) holds, {there is a constant $C_1$ depending only on $\rho$, $C_{\phi}$, $C_q'$, $C_q''$ and $C_S$ such that} 
\[
\sup_{f \in \mathcal{W}_{\phi}(\rho)}\E{\normX{\hat f_{\alpha_*} - f}^2} \le C_1 \phi(\alpha_*)^2 = C_1\sigma^2 \frac{S(\alpha_*)}{\alpha_*} \qquad \text{ as }\sigma \searrow 0.  
\]
\item
If $\tilde{\phi}(t) \coloneqq \sqrt{t}\phi(t)$ is a qualification of the filter $q_\alpha$, {namely,
\begin{equation}\label{eq:qctilde}
\sup_{\lambda \in [0, \lambda_1]} \tilde\phi(\lambda) \abs{1 - \lambda q_{\alpha}(\lambda)} \le C_{\tilde\phi} \tilde\phi(\alpha)  \qquad \text{ for all } \alpha \in \mathcal{A},
\end{equation} 
then there is a constant $C_2$ depending only on $\rho$, ${C}_{\tilde\phi}$, $C_q'$, $C_q''$ and $C_S$ such that}
\[
\sup_{f \in \mathcal{W}_{\phi}(\rho)}\E{\normY{T \hat f_{\alpha_*} - T f}^2} \le C_2\alpha_* \phi(\alpha_*)^2 = C_2 \sigma^2 S(\alpha_*) \qquad \text{ as }\sigma \searrow 0.  
\]
\end{enumerate}
\end{theorem}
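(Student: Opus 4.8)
The plan is to carry out a classical bias--variance decomposition of both risks in the Gaussian sequence model~\eqref{eq:seq_model} and then to balance the two contributions through the defining relation~\eqref{eq:apriori_choice}. Writing $s_\alpha(\lambda)=\lambda q_\alpha(\lambda)$ and inserting $Y_k=\sqrt{\lambda_k}f_k+\sigma\xi_k$ into~\eqref{eq:seq_est} gives $(\hat f_\alpha)_k = s_\alpha(\lambda_k) f_k + \sigma\sqrt{\lambda_k}q_\alpha(\lambda_k)\xi_k$, so that for the direct risk
\[
\E{\normX{\hat f_{\alpha}-f}^2}=\sum_{k=1}^\infty\bigl(1-s_\alpha(\lambda_k)\bigr)^2 f_k^2+\sigma^2\sum_{k=1}^\infty\frac{s_\alpha(\lambda_k)^2}{\lambda_k},
\]
while the analogous decomposition of the prediction risk $r(\alpha,f)=\E{\normY{T\hat f_\alpha - Tf}^2}$ is already recorded in~\eqref{eq:pred_risk}. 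In both lines the first sum is the squared bias and the second the variance, and each bound will be obtained uniformly over $f\in\mathcal{W}_{\phi}(\rho)$.

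For the bias I would use that $f\in\mathcal{W}_{\phi}(\rho)$ means $f_k=\phi(\lambda_k)w_k$ with $\sum_k w_k^2\le\rho^2$, which lets the source condition factor out. For the direct risk, pulling out the supremum and applying the qualification~\eqref{eq:qc_general} gives
\[
\sum_k\bigl(1-s_\alpha(\lambda_k)\bigr)^2 f_k^2\le\Bigl(\sup_{\lambda}\phi(\lambda)\abs{1-\lambda q_\alpha(\lambda)}\Bigr)^2\sum_k w_k^2\le C_\phi^2\rho^2\,\phi(\alpha)^2.
\]
For the prediction risk the extra factor $\lambda_k$ combines with $\phi(\lambda_k)^2$ into $\tilde\phi(\lambda_k)^2=\lambda_k\phi(\lambda_k)^2$, so the hypothesis that $\tilde\phi$ is a qualification~\eqref{eq:qctilde} yields $\sum_k\lambda_k(1-s_\alpha(\lambda_k))^2 f_k^2\le C_{\tilde\phi}^2\rho^2\,\alpha\phi(\alpha)^2$. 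This is exactly why part~(ii) must invoke the stronger qualification on $\tilde\phi$.

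The variance estimate is the core of the argument. From the filter bounds $\abs{q_\alpha(\lambda)}\le C_q'/\alpha$ and $\abs{q_\alpha(\lambda)}\le C_q''/\lambda$ I obtain $s_\alpha(\lambda)\le\min\{C_q'\lambda/\alpha,\,C_q''\}$, and I would split each variance sum at $\lambda=\alpha$. The part with $\lambda_k\ge\alpha$ contributes at most $(C_q'')^2\Sigma(\alpha)$ to the prediction variance and, bounding $1/\lambda_k\le1/\alpha$ over $\Sigma(\alpha)$ terms, at most $(C_q'')^2\Sigma(\alpha)/\alpha$ to the direct variance. The part with $\lambda_k<\alpha$ is handled by writing $\sum_{\lambda_k<\alpha}h(\lambda_k)=\int_0^\alpha h\,d(-\Sigma)$, integrating by parts (the boundary term vanishes since $\alpha\Sigma(\alpha)\to0$ by Assumption~\ref{surrogate_ass}~(i)), replacing $\Sigma$ by the surrogate $S$, and invoking the doubling-type bound $\frac{1}{\alpha}\int_0^\alpha S(t)\,dt\le C_S S(\alpha)$ of Assumption~\ref{surrogate_ass}~(ii). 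This produces $\sum_{\lambda_k<\alpha}\lambda_k^2\lesssim\alpha^2 S(\alpha)$ and $\sum_{\lambda_k<\alpha}\lambda_k\lesssim\alpha S(\alpha)$, whence $\sigma^2\sum_k s_\alpha(\lambda_k)^2\lesssim\sigma^2 S(\alpha)$ and $\sigma^2\sum_k s_\alpha(\lambda_k)^2/\lambda_k\lesssim\sigma^2 S(\alpha)/\alpha$, with constants depending only on $C_q'$, $C_q''$ and $C_S$.

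Finally, evaluating at $\alpha=\alpha_*$ and using $\alpha_*\phi(\alpha_*)^2=\sigma^2 S(\alpha_*)$ makes bias and variance of the same order: for the direct risk both are $\lesssim\phi(\alpha_*)^2=\sigma^2 S(\alpha_*)/\alpha_*$, and for the prediction risk both are $\lesssim\alpha_*\phi(\alpha_*)^2=\sigma^2 S(\alpha_*)$, which are precisely the asserted bounds. I expect the main obstacle to be the variance step: the Stieltjes sums are genuinely governed by the non-smooth counting function $\Sigma$, and the only thing that renders them tractable is the asymptotic replacement $\Sigma\sim S$ together with the self-similarity in Assumption~\ref{surrogate_ass}~(ii); carefully controlling the boundary terms in the integration by parts and justifying the passage $\Sigma\to S$ are legitimate only as $\sigma\searrow0$ (equivalently $\alpha_*\searrow0$, a fact that itself must first be extracted from~\eqref{eq:apriori_choice}), which is the reason the statement is asymptotic rather than uniform in $\sigma$.
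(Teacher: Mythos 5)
Your proof is correct. The paper itself contains no proof of Theorem~\ref{apriori_choice} --- it is imported from \citet{bhmr07} --- but your bias--variance argument is precisely the standard one behind that result: it reproduces the paper's remark that the bias constant depends only on $\rho$ and $C_\phi$ (resp.\ $C_{\tilde\phi}$) while the variance constant depends only on $C_q'$, $C_q''$ and $C_S$, and your variance estimate (splitting the spectral sum at $\lambda=\alpha$, Stieltjes integration by parts with the boundary term killed by $\lim_{t\searrow0}t\Sigma(t)=0$, replacement of $\Sigma$ by $S$ for small $\alpha$, then Assumption~\ref{surrogate_ass}~(ii)) coincides with the computation the paper does carry out in Case~I of the proof of Lemma~\ref{compare}. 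You also correctly flag the one genuinely asymptotic ingredient, namely that \eqref{eq:apriori_choice} forces $\alpha_*\searrow0$ as $\sigma\searrow0$, which is what licenses the passage from $\Sigma$ to $S$ and makes the statement hold only as $\sigma\searrow0$.
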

{
\begin{remark}
Actually, the upper bounds on the risk in Theorem~\ref{apriori_choice} consist of a bias part and a variance part. The front constant in the bias part depends only on $\rho$ and $C_{\phi}$ (or $C_{\tilde\phi}$), while that in the variance part depends only on $C_q'$, $C_q''$ and $C_S$. Moreover, it is worth noting that \eqref{eq:qctilde} in particular implies \eqref{eq:qc_general}, i.e., whenever $\tilde{\phi}(t) \coloneqq \sqrt{t}\phi(t)$ is a qualification of the filter $q_\alpha$, then also $\phi$ is a qualification of the filter $q_\alpha$ and Assumption \ref{sc_conv_ass} (ii) is satisfied. 
\end{remark}
} 
\subsection{Oracle inequality}\label{ss:oi}
We are now in position to derive an oracle inequality in the general form of \eqref{eq:goie} for the empirical parameter choice $\alpha = \alpha_{\mathrm{pred}}$.
\begin{theorem}[Oracle inequality]\label{oracle_ineq}
Assume the model~\eqref{eq:model}. Let $\hat{f}_{\alpha} \coloneqq q_{\alpha}(T^*T)T^*Y$ with an ordered filter $q_{\alpha}$, and Assumptions~\ref{sc_conv_ass}, \ref{surrogate_ass} and \ref{para_filter_ass} hold. Let also $\alpha_o$ be given by~\eqref{eq:alpha_o}, and $\alpha_{\mathrm{pred}}$ by~\eqref{eq:alpha_pred}. {Then there are positive constants $C_1, C_2$ and $C_3$, independent of $f$, $\alpha$ and $\sigma$, such that 
\begin{equation}\label{eq:oracle_ineq}
\E{\normX{\hat{f}_{\alpha_{\mathrm{pred}}} - f}^2} \le  \rho^2\psi^{-1}\biggl(\sigma^2\Bigl(\frac{2}{\rho^2}\gamma_\sigma + C_1\Bigr)\biggr) + \sigma^2 C_3 \biggl(\frac{\gamma_\sigma + \sqrt{\gamma_\sigma}}{S^{-1} \left(2 C_q \gamma_\sigma \right)} + C_2\biggr)\qquad \text{with }\gamma_\sigma\coloneqq \frac{r(\alpha_o, f)}{\sigma^2}. 
\end{equation}}
\end{theorem}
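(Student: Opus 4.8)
The plan is to work in the Gaussian sequence model \eqref{eq:seq_model}, in which
$\normX{\hat f_{\alpha_{\mathrm{pred}}}-f}^2=\sum_k\bigl[(s_{\alpha_{\mathrm{pred}}}(\lambda_k)-1)f_k+\sigma\sqrt{\lambda_k}q_{\alpha_{\mathrm{pred}}}(\lambda_k)\xi_k\bigr]^2$,
and to split the right-hand side into the \emph{deterministic} bias functional $B(\alpha)\coloneqq\sum_k(1-s_\alpha(\lambda_k))^2 f_k^2$ evaluated at the random $\alpha_{\mathrm{pred}}$, the empirical variance $\sigma^2 W(\alpha_{\mathrm{pred}},\xi)$ with $W(\alpha,\xi)\coloneqq\sum_k\lambda_k q_\alpha(\lambda_k)^2\xi_k^2$, and a cross term. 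The two summands of \eqref{eq:oracle_ineq} correspond respectively to the bias and to the stochastic part, so I would bound $\E{B(\alpha_{\mathrm{pred}})}$ by the first and the remaining terms by the second, in both cases using Theorem~\ref{expBnd} to transfer control from the oracle $\alpha_o$ to $\alpha_{\mathrm{pred}}$ via the prediction risk $r(\alpha_{\mathrm{pred}},f)$.

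For the bias the key is a deterministic interpolation inequality $B(\alpha)\le\rho^2\psi^{-1}\bigl(b(\alpha)/\rho^2\bigr)$, where $b(\alpha)\coloneqq\sum_k\lambda_k(1-s_\alpha(\lambda_k))^2 f_k^2$ is the bias part of the prediction risk in \eqref{eq:pred_risk}. Writing $f_k=\phi(\lambda_k)w_k$ with $\normX{w}\le\rho$ from Assumption~\ref{sc_conv_ass}~(i), one has the pointwise bound $\psi\bigl((1-s_\alpha(\lambda_k))^2\phi(\lambda_k)^2\bigr)\le\lambda_k(1-s_\alpha(\lambda_k))^2\phi(\lambda_k)^2$, since $1-s_\alpha\le1$ and $\phi^{-1}$ is increasing; Jensen's inequality for the convex function $\psi$ (Assumption~\ref{sc_conv_ass}~(iii)) applied to the spectral measure $\sum_k w_k^2\delta_{\lambda_k}$ then gives the claim, the passage from $\normX{w}^2$ to $\rho^2$ being justified by concavity of $\psi^{-1}$. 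Because $b(\alpha)\le r(\alpha,f)$ and $\psi^{-1}$ is concave, a second Jensen step over the probability space yields $\E{B(\alpha_{\mathrm{pred}})}\le\rho^2\psi^{-1}\bigl(\E{r(\alpha_{\mathrm{pred}},f)}/\rho^2\bigr)$, and the moment bound $\E{r(\alpha_{\mathrm{pred}},f)}\le r(\alpha_o,f)+C\sigma\sqrt{r(\alpha_o,f)}$ obtained by integrating the deviation inequality of Theorem~\ref{expBnd} (combined with $C\sqrt{\gamma_\sigma}\le\gamma_\sigma+C^2/4$) produces exactly the first term of \eqref{eq:oracle_ineq}.

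For the variance I would use the deterministic estimate $V(\alpha)=\sigma^2\sum_k\lambda_k q_\alpha(\lambda_k)^2\le C\sigma^2 S(\alpha)/\alpha$, proved as in the variance part of Theorem~\ref{apriori_choice} from Assumption~\ref{surrogate_ass}~(i)--(ii), together with the matching lower bound $v(\alpha)=\sigma^2\sum_k s_\alpha(\lambda_k)^2\ge c_q^2\sigma^2\Sigma(\alpha)$, which follows from monotonicity of $\lambda\mapsto s_\alpha(\lambda)$ and $s_\alpha(\alpha)\ge c_q$ (Assumption~\ref{para_filter_ass}). Fixing the deterministic threshold $\alpha_{\min}\coloneqq S^{-1}(2C_q\gamma_\sigma)$, the event $\{\alpha_{\mathrm{pred}}<\alpha_{\min}\}$ forces $v(\alpha_{\mathrm{pred}})\ge c_q^2\sigma^2\Sigma(\alpha_{\mathrm{pred}})\gtrsim 2C_q c_q^2\sigma^2\gamma_\sigma$ and hence $r(\alpha_{\mathrm{pred}},f)>2r(\alpha_o,f)$, because $C_q c_q^2>1$ by the compatibility condition $c_q>C_q^{-1/2}$; this is a large deviation, quantified by Theorem~\ref{expBnd}. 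On the complementary good event I would exploit that the ordered filter is monotone in $\alpha$ (Definition~\ref{def:order_filter}) to dominate $W(\alpha_{\mathrm{pred}},\xi)\le W(\alpha_{\min},\xi)$ pointwise in $\xi$, so that $\sigma^2\E{W(\alpha_{\min},\xi)}=V(\alpha_{\min})\lesssim\sigma^2\gamma_\sigma/S^{-1}(2C_q\gamma_\sigma)$ gives the leading part of the second term.

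I expect the main obstacle to be the coupling between the data-driven parameter $\alpha_{\mathrm{pred}}$ and the very noise $\xi$ entering the empirical variance and the cross term; this is the phenomenon flagged in the remark after \eqref{eq:alpha_pred}, namely $\E{r(\alpha_{\mathrm{pred}},f)}\neq\E{\normY{T\hat f_{\alpha_{\mathrm{pred}}}-Tf}^2}$. The monotonicity argument decouples $\alpha_{\mathrm{pred}}$ from $\xi$ on the good event, but on $\{\alpha_{\mathrm{pred}}<\alpha_{\min}\}$ the empirical variance may be large precisely where $\xi$ is atypical. I would parametrize this event by the deviation level $x$ of $r(\alpha_{\mathrm{pred}},f)/\sigma^2$ above $\gamma_\sigma$, control the conditional magnitude of $W$ through its second moment $\sum_k\lambda_k^2 q_\alpha(\lambda_k)^4\sim\Psi(S(\alpha))$ with $\Psi(x)=x/(S^{-1}(x))^2$, and integrate against the exponential tail $\exp(-C_\xi''\sqrt{x/2})$ of Theorem~\ref{expBnd}; finiteness of the resulting integral is exactly Assumption~\ref{surrogate_ass}~(iii), and it supplies the additive constant $C_2$, while the $\sqrt{\gamma_\sigma}$ term reflects the first-order fluctuation $C\sigma\sqrt{r(\alpha_o,f)}$ of the prediction risk about its oracle value, entering both the count on the good event and the cross term via Cauchy--Schwarz.
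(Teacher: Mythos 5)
Your proposal is correct in substance, and its two halves have rather different relations to the paper. The bias half is essentially the paper's own argument: your deterministic interpolation bound $B(\alpha)\le\rho^2\psi^{-1}\bigl(b(\alpha)/\rho^2\bigr)$ is Lemma~\ref{source_cond} (your Jensen step against the spectral measure $\sum_k w_k^2\delta_{\lambda_k}$, followed by monotonicity of $t\mapsto t\,\psi^{-1}(b/t)$, is a legitimate variant of the paper's normalized-weight trick), and your transfer $\E{r(\alpha_{\mathrm{pred}},f)}\le 2r(\alpha_o,f)+C\sigma^2$ by integrating Kneip's tail and absorbing $C\sqrt{\gamma_\sigma}\le\gamma_\sigma+C^2/4$ is precisely Corollary~\ref{ctrMoment} with $\Psi(x)=x$ (Remark~\ref{rmk:moments}). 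The stochastic half is where you genuinely depart. The paper never conditions on events: it splits the variance into its mean plus the centered fluctuation $\sigma^2\E{\sum_k\lambda_k q_{\alpha_{\mathrm{pred}}}(\lambda_k)^2(\xi_k^2-1)}$, bounds both through the comparison Lemma~\ref{compare} ($\Psi_1,\Psi_2$) composed with Corollary~\ref{ctrMoment}, and resolves the coupling between $\alpha_{\mathrm{pred}}$ and $\xi$ in one stroke with Golubev's maximal inequality for ordered processes (Lemma~\ref{od_process}), $\E{\sup_{\alpha}(\zeta(\alpha)-x\kappa(\alpha)^2)_+}\le C_\xi/x$, optimized over $x$. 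Your alternative --- monotone domination $W(\alpha_{\mathrm{pred}},\xi)\le W(\alpha_{\min},\xi)$ on the good event $\{\alpha_{\mathrm{pred}}\ge\alpha_{\min}\}$ with the deterministic threshold $\alpha_{\min}=S^{-1}(2C_q\gamma_\sigma)$, plus peeling of the bad event by deviation level against the tail of Theorem~\ref{expBnd} --- is a viable and more elementary decoupling device, and it has the expository merit of making transparent why $c_q>C_q^{-1/2}$ is assumed: exactly so that $\alpha_{\mathrm{pred}}<\alpha_{\min}$ forces $r(\alpha_{\mathrm{pred}},f)\ge 2r(\alpha_o,f)$, a point the paper buries inside the constants of Lemma~\ref{compare}. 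What the paper's route buys is economy: no case distinction, no peeling, and the $\sqrt{\gamma_\sigma}$ term drops out directly from the $\Psi_2^{1/2}$ bound after optimizing Lemma~\ref{od_process} in $x$; you still need the same comparison estimates (your $V(\alpha)\lesssim\sigma^2 S(\alpha)/\alpha$ and second-moment bound are the two inequalities of Lemma~\ref{compare}) as input.

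Two caveats on your route, neither fatal. First, the domination $W(\alpha_{\mathrm{pred}},\xi)\le W(\alpha_{\min},\xi)$ needs $q_\alpha(\lambda_k)^2$ non-increasing in $\alpha$, i.e.\ effectively $q_\alpha\ge0$; Definition~\ref{def:order_filter} orders $q_\alpha$, not $q_\alpha^2$ (the paper's own application of Lemma~\ref{od_process} with $c_k(\alpha)=\lambda_k q_\alpha(\lambda_k)^2$ quietly needs the same property, so this is a shared, not a new, hypothesis). Second, the Cauchy--Schwarz step on the bad-event slices halves the exponent inherited from Theorem~\ref{expBnd}, so the integrability you invoke is against $\exp\bigl(-\tfrac12 C_\xi''\sqrt{x/2}\bigr)$ rather than $\exp\bigl(-C_\xi''\sqrt{x/2}\bigr)$; strictly speaking this is a slightly stronger requirement than Assumption~\ref{surrogate_ass}~(iii) as written, though it is equivalent for every example where $\Psi$ grows polynomially, and can be avoided by splitting $W(\beta_j,\xi)$ into its mean (paired with $\Prob{A_j}$ at full exponent) plus a centered part before applying Cauchy--Schwarz. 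Also note that your pointwise bound $\psi\bigl((1-s_\alpha)^2\phi(\lambda)^2\bigr)\le\lambda(1-s_\alpha)^2\phi(\lambda)^2$ uses $\abs{1-s_\alpha(\lambda)}\le1$, which you should state as a hypothesis; it holds for all filters in Table~\ref{tab:example}, and the conclusion of Lemma~\ref{source_cond} genuinely requires it.
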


\begin{remark}
{Despite the fact that \eqref{eq:oracle_ineq} is not an oracle inequality in the strict sense as discussed e.g. by \cite{cgpt02}, we still call \eqref{eq:oracle_ineq} an oracle inequality as it relates the direct risk under $\alpha_{\mathrm{pred}}$ with the weak \emph{oracle} risk $r(\alpha_o, f)$. We emphasize that this is in line with \cite{cg14} and refer to \cite{w17} for further discussion. 
Moreover, we point out that constants $C_1, C_2$ in Theorem~\ref{oracle_ineq} are in fact universal, while $C_3$ depends only on $C_q', C_q'', C_q, C_S$, $c_q$ and the operator $T$. 
}
\end{remark}

The proof of Theorem~\ref{oracle_ineq} is based on a general strategy together with technical lemmata, and is postponed to Section \ref{sec:proofs}. To ease the understanding of this paper, we will now start with conclusions from Theorem \ref{oracle_ineq}. 

\subsection{Convergence rates and examples}
The derived oracle inequality in Theorem~\ref{oracle_ineq} readily provides error estimates for the estimator $\hat f_{\alpha_{\mathrm{pred}}}$ given proper upper bounds of the oracle prediction risk $r(\alpha_o, f)$ as in Theorem \ref{apriori_choice}. 

\begin{theorem}[Convergence rates]\label{conv_rate}
Assume the same setting as in Theorem~\ref{oracle_ineq}, and additionally that $\tilde{\phi}(t) = \sqrt{t}\phi(t)$ is a qualification of the filter $q_\alpha$. Let $\alpha_*$ be given by~\eqref{eq:apriori_choice}. Then as $\sigma \searrow 0$ we have
\begin{equation*}
\sup_{f \in \mathcal{W}_{\phi}(\rho)}\E{\normX{\hat f_{\alpha_{\mathrm{pred}}} - f}^2} \le C_1\phi(\alpha_*)^2 + C_3\frac{\alpha_* \phi(\alpha_*)^2}{S^{-1}\left(C_2C_q \frac{\alpha_*\phi(\alpha_*)^2}{\sigma^2}\right)} = C_1\sigma^2\frac{S(\alpha_*)}{\alpha_*} + C_3\frac{\sigma^2 S(\alpha_*)}{S^{-1}\left(C_2C_q S(\alpha_*)\right)}
\end{equation*}{
for some constants $C_1, C_2, C_3 > 0$ independent of $\sigma$.}
\end{theorem}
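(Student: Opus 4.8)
The plan is to read off Theorem~\ref{conv_rate} as a direct consequence of the oracle inequality \eqref{eq:oracle_ineq}, once the oracle prediction risk $r(\alpha_o,f)$ entering through $\gamma_\sigma$ has been controlled by the explicit a-priori bound of Theorem~\ref{apriori_choice}. The two ingredients are thus the already-proven oracle inequality and the fact that, under the additional hypothesis that $\tilde\phi(t)=\sqrt t\,\phi(t)$ is a qualification of $q_\alpha$, part (ii) of Theorem~\ref{apriori_choice} bounds the prediction risk of the a-priori estimator $\hat f_{\alpha_*}$.

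First I would bound $\gamma_\sigma$ uniformly over the source set. Since $r(\alpha,f)=\E{\normY{T\hat f_\alpha-Tf}^2}$ and $\alpha_o$ minimises $r(\cdot,f)$ over $\mathcal A$, the comparison $r(\alpha_o,f)\le r(\alpha_*,f)$ holds (for $\sigma$ small, where $\alpha_*$ is admissible), and Theorem~\ref{apriori_choice}~(ii) yields $\sup_{f\in\mathcal W_{\phi}(\rho)} r(\alpha_*,f)\le \kappa\,\sigma^2 S(\alpha_*)$ with a constant $\kappa$ from that theorem. Hence $\gamma_\sigma=r(\alpha_o,f)/\sigma^2\le \kappa\,S(\alpha_*)$ uniformly in $f$. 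The next observation is that the right-hand side of \eqref{eq:oracle_ineq} is monotonically non-decreasing in $\gamma_\sigma$: the map $\psi^{-1}$ is increasing because $\psi$ is strictly increasing, and since $S'<0$ the inverse $S^{-1}$ is strictly decreasing, so $\gamma\mapsto(\gamma+\sqrt\gamma)/S^{-1}(2C_q\gamma)$ is increasing as well. I may therefore substitute the upper bound $\gamma_\sigma\le\kappa S(\alpha_*)$ directly into \eqref{eq:oracle_ineq}.

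It then remains to simplify the resulting two terms. For the bias-type term I would use the identity $\psi(\phi(\alpha_*)^2)=\alpha_*\phi(\alpha_*)^2=\sigma^2 S(\alpha_*)$, which follows from $\psi(x)=x\phi^{-1}(\sqrt x)$ together with the defining relation \eqref{eq:apriori_choice}, so that $\psi^{-1}(\sigma^2 S(\alpha_*))=\phi(\alpha_*)^2$. Because $\alpha_*\to0$ forces $S(\alpha_*)\to\infty$ in the infinite-dimensional setting, the additive constant inside $\psi^{-1}$ is asymptotically negligible and its argument is at most $C'\sigma^2 S(\alpha_*)$; invoking the concavity of $\psi^{-1}$ (equivalently convexity of $\psi$, Assumption~\ref{sc_conv_ass}~(iii)), which gives $\psi^{-1}(cx)\le c\,\psi^{-1}(x)$ for $c\ge1$ since $\psi^{-1}(0)=0$, the first term is bounded by a multiple of $\phi(\alpha_*)^2$. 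For the variance-type term, $\sqrt{\gamma_\sigma}\le\gamma_\sigma$ and $\kappa S(\alpha_*)\ge1$ for small $\sigma$ collapse the numerator to order $S(\alpha_*)$, while the monotonicity of $S^{-1}$ turns the denominator into $S^{-1}(2\kappa C_q S(\alpha_*))$, producing $C_3\,\sigma^2 S(\alpha_*)/S^{-1}(C_2 C_q S(\alpha_*))$ after renaming constants; the leftover additive piece is of order $\sigma^2=o(\phi(\alpha_*)^2)$ (as $\sigma^2/\phi(\alpha_*)^2=\alpha_*/S(\alpha_*)\to0$) and is absorbed into the first term. Rewriting via $\phi(\alpha_*)^2=\sigma^2 S(\alpha_*)/\alpha_*$ and $\alpha_*\phi(\alpha_*)^2/\sigma^2=S(\alpha_*)$ delivers the two displayed forms, which differ only by this algebra.

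The main obstacle I anticipate is the bias term, precisely because $\psi^{-1}$ is nonlinear and the constants multiplying $S(\alpha_*)$ inside it cannot simply be pulled out; this is exactly where Assumption~\ref{sc_conv_ass}~(iii) (convexity of $\psi$, hence concavity and sub-homogeneity of $\psi^{-1}$) is essential. A secondary, bookkeeping-type care is the asymptotic reasoning ``as $\sigma\searrow0$'': one must verify $\alpha_*\to0$ and $S(\alpha_*)\to\infty$ in order to discard the additive constants and the $\sqrt{\gamma_\sigma}$ contribution, and one must ensure $\alpha_*$ is admissible (or replace it by a neighbouring point of $\mathcal A$, e.g.\ for spectral cut-off) so that the comparison $r(\alpha_o,f)\le r(\alpha_*,f)$ is legitimate.
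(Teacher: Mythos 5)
Your proposal is correct and follows essentially the same route as the paper's proof: bound $r(\alpha_o,f)\le r(\alpha_*,f)\le C\sigma^2 S(\alpha_*)$ via Theorem~\ref{apriori_choice}~(ii), plug this into the oracle inequality \eqref{eq:oracle_ineq} using monotonicity of its right-hand side in $\gamma_\sigma$, and simplify with the sub-homogeneity $\psi^{-1}(cx)\le c\,\psi^{-1}(x)$ together with the identity $\psi^{-1}\bigl(\alpha_*\phi(\alpha_*)^2\bigr)=\phi(\alpha_*)^2$ from \eqref{eq:apriori_choice}, absorbing the additive $O(\sigma^2)$ terms since $S(\alpha_*)\nearrow\infty$. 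Your explicit attention to the monotonicity of the map $\gamma\mapsto(\gamma+\sqrt{\gamma})/S^{-1}(2C_q\gamma)$ and to the admissibility of $\alpha_*$ in $\mathcal A$ makes precise two points the paper leaves implicit, but the argument is the same.
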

\begin{proof}{
From Theorem~\ref{oracle_ineq}, it follows that 
$$
\sup_{f \in \mathcal{W}_{\phi}(\rho)}\E{\normX{\hat f_{\alpha_{\mathrm{pred}}} - f}^2}\le \sup_{f \in \mathcal{W}_{\phi}(\rho)}\left(\rho^2\psi^{-1}\Bigl(\frac{2}{\rho^2} r(\alpha_o, f)+ C_1\sigma^2\Bigr) + C_3 \biggl(\frac{r(\alpha_o, f) + \sigma\sqrt{r(\alpha_o, f)}}{S^{-1} \left(2 C_q \frac{r(\alpha_o, f)}{\sigma^2} \right)} + C_2 \sigma^2\biggr)\right),
$$
where $C_1$, $C_2$ are universal, and $C_3$ depends only on  $C_q', C_q'', C_q, C_S$, $c_q$ and the operator $T$. By Theorem~\ref{apriori_choice}~(ii), there is a constant $C_4 \ge \rho^2$ depending only on $\rho$, ${C}_{\tilde\phi}$, $C_q'$, $C_q''$ and $C_S$ such that
$$
\sup_{f \in \mathcal{W}_{\phi}(\rho)}r(\alpha_o, f) \le \sup_{f \in \mathcal{W}_{\phi}(\rho)} r(\alpha_*, f) \le C_4 \alpha_*\phi(\alpha_*)^2 = C_4\sigma^2S(\alpha_*)\qquad\text{for sufficiently small }\sigma. 
$$
Note that by definition $\alpha_* \searrow 0$ as $\sigma \searrow 0$, so 
$$
\frac{\alpha_*\phi(\alpha_*)^2}{\sigma^2} = S(\alpha_*) \nearrow \infty\qquad \text{ as } \sigma \searrow 0.  
$$
Recall that $\psi(x) = x\phi^{-1}(\sqrt{x})$ is strictly increasing and convex, and $\psi(0) = 0$. Thus, $\psi^{-1}$ is strictly increasing, and $\psi^{-1}(cx) \le c\psi^{-1}(x)$ for any $c\ge 1$. Note also that $S^{-1}$ is strictly decreasing. Thus,  for small enough~$\sigma$
\begin{align*}
&\rho^2\psi^{-1}\Bigl(\frac{2}{\rho^2} r(\alpha_o, f)+ C_1\sigma^2\Bigr) + C_3 \biggl(\frac{r(\alpha_o, f) + \sigma\sqrt{r(\alpha_o, f)}}{S^{-1} \left(2 C_q \frac{r(\alpha_o, f)}{\sigma^2} \right)} + C_2 \sigma^2\biggr) \\
\le \,& \rho^2\psi^{-1}\Bigl(\frac{3}{\rho^2}C_4 \alpha_*\phi(\alpha_*)^2 \Bigr) + \frac{2C_3C_4\alpha_*\phi(\alpha_*)^2}{S^{-1} \left(2 C_q C_4\frac{\alpha_*\phi(\alpha_*)^2}{\sigma^2} \right)} \\
\le \, & 3C_4 \psi^{-1}\bigl(\alpha_*\phi(\alpha_*)^2\bigr) +  \frac{2C_3C_4\alpha_*\phi(\alpha_*)^2}{S^{-1} \left(2 C_q C_4\frac{\alpha_*\phi(\alpha_*)^2}{\sigma^2} \right)} = 3C_4\phi(\alpha_*)^2 +  \frac{2C_3C_4\alpha_*\phi(\alpha_*)^2}{S^{-1} \left(2 C_q C_4\frac{\alpha_*\phi(\alpha_*)^2}{\sigma^2} \right)},
\end{align*}
which holds uniformly over $f \in \mathcal{W}_{\phi}(\rho)$, and thus concludes the proof.}
\end{proof}

\begin{remark}\label{rmk:conv_rate}
If, in addition, there is a constant {$\tilde{C}>0$} such that
\[
S({\tilde{C}} x) \ge C_2 C_q S(x) \qquad \text{ for all } x > 0,
\]
then it follows from Theorem~\ref{conv_rate} that
\[
\sup_{f \in \mathcal{W}_{\phi}(\rho)}\E{\normX{\hat f_{\alpha_{\mathrm{pred}}} - f}^2} \le C \phi(\alpha_*)^2 = C\sigma^2\frac{S(\alpha_*)}{\alpha_*}
\]
with some $C>0$ independent of $\alpha_*$ and $\sigma$. This coincides with the convergence rate we obtain under an a-priori parameter choice in Theorem~\ref{apriori_choice}, which turns out to be order optimal in most cases, see e.g.~Section~\ref{ss:example_mild} and~\citet{bhmr07}. 

We note that there are two additional important assumptions for a-posteriori parameter choice $\alpha_{\mathrm{pred}}$ compared to the a-priori choice (cf.~Theorem~\ref{apriori_choice} (i) and Theorem~\ref{conv_rate}). The one is Assumption~\ref{surrogate_ass} (iii), which concerns the control of general moments of $r(\alpha_{\mathrm{pred}}, f)$, due to the randomness of $\alpha_{\mathrm{pred}}$. The other is that not only $\phi$ but also $\tilde{\phi}(t) = \sqrt{t}\phi(t)$ is a qualification of the filter $q_\alpha$. The latter seems to be typical for parameter choice rule relying on residuals in image space, as e.g.~for the discrepancy principle in case of deterministic inverse problems, see~\citet[][Section~4.3]{ehn96} and~\citet{mp06}, or for generalized cross-validation (GCV), see \citet{l93} and \citet{v02}. Still we stress that the qualification assumption plays no role in the proof of the oracle inequality (Theorem~\ref{oracle_ineq}), and it only kicks in for the derivation of convergence rates for $\hat f_{\alpha_{\mathrm{pred}}}$ through $r(\alpha_o, f)$ in the convergence analysis under the a-priori parameter choice. 
\end{remark}

\subsubsection{Mildly ill-posed problems}\label{ss:example_mild}

We now consider a particular mildly ill-posed problem. More precisely, we assume
\begin{assumption}\label{mildIP_ass}
\begin{enumerate}
\item
Polynomial decay of eigenvalues of $T^*T$
\[
\lambda_k =  C_a k^{-a}\qquad k = 1,2, \ldots\qquad \text{ with some } a > 1 \text{ and } C_a > 0;
\]
\item
Smoothness of the truth
\[
f \in {\mathcal{S}_{b}} \coloneqq \left\{f \in \X:\sum_{k=1}^\infty w_k f_k^2  \le 1\right\} \qquad \text{ with }w_k = C_b k^ b\quad k= 1,2,\ldots,
\] 
for some positive constants $b$ and $C_b$. 
\end{enumerate}
\end{assumption}
\noindent In the above assumption, the requirement of $a > 1$ is to ensure that the forward operator $T$ is Hilbert-Schmidt, and {the smoothness class $\mathcal{S}_{b}$ is equivalent to the H{\"o}lder type source condition of order $b/(2a)$, more precisely, $\mathcal{S}_b \equiv \mathcal{W}_{b/(2a)}\bigl(C_a^{-b/a}C_b^{-1}\bigr)$ in~\eqref{eq:sc_hoelder}.} 

In this simple setting the convergence rates of ordered filter based methods with empirical parameter choice $\alpha_{\mathrm{pred}}$ can be explicitly computed. 
\begin{corollary}[Order optimality]\label{order_optimal}
Assume the model~\eqref{eq:model}. Let $\hat{f}_{\alpha} \coloneqq q_{\alpha}(T^*T)T^*Y$ with an ordered filter $q_{\alpha}$, and Assumptions~\ref{para_filter_ass} and~\ref{mildIP_ass} with any $c_q>0$ hold. If the qualification index $v_0$ in~\eqref{eq:qc_hoelder} of the ordered filter satisfies $v_0 \ge b/(2a) + 1/2$, then there are positive constants $C_1$ and $C_2$ {independent of $\sigma$} such that 
\[
\sup_{f \in \mathcal{S}_b}\E{\normX{\hat{f}_{\alpha_{\mathrm{pred}}} -f}^2} \le C_1 \inf_{\hat{f}} \sup_{f \in \mathcal{S}_b} \E{\normX{ \hat{f} - f}^2} \le C_2 \sigma^{\frac{2b}{a+b+1}}\qquad \text{ as } \sigma \searrow 0.
\] 
The infimum above is taken over all possible estimators $\hat f$ (including both linear and nonlinear ones).
\end{corollary}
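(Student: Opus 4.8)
The plan is to reduce the corollary to Theorem~\ref{conv_rate} (via Remark~\ref{rmk:conv_rate}) for the upper bound, and to invoke a classical minimax lower bound for the matching lower bound. As recorded just before the statement, $\mathcal{S}_b$ equals the Hölder source set $\mathcal{W}_{v}(\rho)$ for $\phi(t)=t^v$ with $v=b/(2a)$ and an explicit radius $\rho$, so Assumption~\ref{sc_conv_ass}~(i) is automatic. The hypothesis $v_0 \ge b/(2a)+1/2$ says precisely that $\tilde\phi(t)=\sqrt{t}\,\phi(t)=t^{\,v+1/2}$ is a Hölder qualification in the sense of~\eqref{eq:qc_hoelder}, which both supplies Assumption~\ref{sc_conv_ass}~(ii) and delivers the extra qualification hypothesis needed in Theorem~\ref{conv_rate}. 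Finally, since $\phi^{-1}(\sqrt x)=x^{1/(2v)}$, the function $\psi(x)=x\,\phi^{-1}(\sqrt x)=x^{1+1/(2v)}$ is convex, giving Assumption~\ref{sc_conv_ass}~(iii).

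Next I would exhibit the surrogate function explicitly. From $\lambda_k=C_ak^{-a}$ one reads off $\Sigma(x)=\lfloor(C_a/x)^{1/a}\rfloor$, so the natural choice is $S(x)=C_a^{1/a}x^{-1/a}$, with inverse $S^{-1}(y)=C_ay^{-a}$. A direct check confirms the four limits in Assumption~\ref{surrogate_ass}~(i) (the last one, $\alpha S(\alpha)\to0$, using $a>1$), Assumption~\ref{surrogate_ass}~(ii) with $C_S=a/(a-1)$, and that $\Psi(x)=x/(S^{-1}(x))^2=C_a^{-2}x^{2a+1}$ has polynomial derivative, so the integral in Assumption~\ref{surrogate_ass}~(iii) converges (the exponential factor dominates) for \emph{every} $C_q>0$. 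This freedom is what reconciles the corollary's phrasing ``with any $c_q>0$'' with the constraint $c_q>C_q^{-1/2}$ in Assumption~\ref{para_filter_ass}~(ii): given $c_q$, pick $C_q>c_q^{-2}$, which still makes Assumption~\ref{surrogate_ass}~(iii) hold. Since $S$ is a pure power, $S(\tilde Cx)=\tilde C^{-1/a}S(x)$, so the doubling condition $S(\tilde Cx)\ge C_2C_qS(x)$ of Remark~\ref{rmk:conv_rate} holds for any sufficiently small $\tilde C$.

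With the assumptions in place I would compute the rate. The balancing equation~\eqref{eq:apriori_choice}, $\alpha_*\phi(\alpha_*)^2=\sigma^2S(\alpha_*)$, becomes $\alpha_*^{\,1+2v+1/a}=C_a^{1/a}\sigma^2$, so that $\alpha_*\asymp\sigma^{2a/(a+2av+1)}$ and
\[
\phi(\alpha_*)^2=\alpha_*^{2v}\asymp\sigma^{4av/(a+2av+1)}=\sigma^{2b/(a+b+1)},
\]
where the final equality uses $2av=b$. Theorem~\ref{conv_rate} together with the doubling condition absorbed through Remark~\ref{rmk:conv_rate} then yields
\[
\sup_{f\in\mathcal{S}_b}\E{\normX{\hat f_{\alpha_{\mathrm{pred}}}-f}^2}\le C\,\phi(\alpha_*)^2\le C_2\,\sigma^{2b/(a+b+1)}.
\]
This already establishes the rightmost inequality of the corollary, because the minimax risk never exceeds the risk of the single estimator $\hat f_{\alpha_{\mathrm{pred}}}$.

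The only genuinely separate ingredient is the matching minimax lower bound $\inf_{\hat f}\sup_{f\in\mathcal{S}_b}\E{\normX{\hat f-f}^2}\ge c\,\sigma^{2b/(a+b+1)}$. In the sequence model~\eqref{eq:seq_model} this is heteroscedastic Gaussian estimation with effective per-coordinate noise $\sigma\lambda_k^{-1/2}\asymp\sigma k^{a/2}$ over the ellipsoid $\sum_kC_bk^bf_k^2\le1$, for which the lower bound of this order is classical; I would obtain it by reduction to a finite hypercube of dimension $N\asymp\sigma^{-2/(a+b+1)}$ (an Assouad- or Fano-type argument), or simply cite the corresponding Pinsker-type result. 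Combining it with the upper bound gives $C_1=C/c$ and hence the leftmost inequality. I expect the main obstacle to be essentially bookkeeping rather than analysis: all the heavy lifting is done by Theorem~\ref{conv_rate}, and the assumptions reduce to elementary power-function estimates, so the only nontrivial external input is the minimax lower bound, which is standard but must be invoked for the $\X$-norm (direct) risk rather than the prediction risk.
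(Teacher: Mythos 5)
Your proposal is correct and follows essentially the same route as the paper's own proof: verify Assumptions~\ref{sc_conv_ass} and~\ref{surrogate_ass} with the explicit power functions $\phi(t)=t^{b/(2a)}$, $\psi(x)=x^{(a+b)/b}$, $S(\alpha)=(\alpha/C_a)^{-1/a}$ and $\Psi(x)=C_a^{-2}x^{1+2a}$, solve the balancing equation~\eqref{eq:apriori_choice} to get $\alpha_*\asymp\sigma^{2a/(a+b+1)}$, apply Theorem~\ref{conv_rate} with Remark~\ref{rmk:conv_rate}, and cite the classical Pinsker-type minimax lower bound. Your additional observations—that the polynomial form of $\Psi$ makes Assumption~\ref{surrogate_ass}~(iii) hold for \emph{every} $C_q>0$ (which is exactly what reconciles ``any $c_q>0$'' with the constraint $c_q>C_q^{-1/2}$), and that the pure-power $S$ satisfies the doubling condition of Remark~\ref{rmk:conv_rate}—are points the paper leaves implicit, and they are verified correctly.
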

\begin{proof}
{
Note that $\phi(x) = x^{b/(2a)}$ and then $\psi(x) = x\phi^{-1}(\sqrt{x}) = x^{(a+b)/b}$, so Assumption~\ref{sc_conv_ass} is satisfied. 
Define $S(\alpha) \coloneqq \bigl(\alpha/C_a \bigr)^{-1/a}$. Elementary calculation shows that $S(\alpha)$ satisfies Assumption~\ref{surrogate_ass} (i) and (ii).}  Since $\Psi(x) = x/ \bigl(S^{-1}(x)\bigr)^2 =  C_a^{-2} x^{1+2a}$, Assumption~\ref{surrogate_ass} (iii) clearly holds. By definition~\eqref{eq:apriori_choice}, it follows that $\alpha_* = C_a^{1/(1+a+b)}\sigma^{2a/(1+a+b)}$. Thus, by Theorem~\ref{conv_rate} and Remark~\ref{rmk:conv_rate}, we have
\[
\sup_{f \in \mathcal{S}_b}\E{\normX{\hat{f}_{\alpha_{\mathrm{pred}}} -f}^2} \le C_1\phi(\alpha_*)^2 = C_1 C_a^{\frac{b}{a(1+a+b)}}(\sigma^2)^{\frac{b}{1+a+b}}\qquad \text{ as }\sigma \searrow 0,
\]
for some positive constant $C_1$. 

Further, it is well-known~\citep[see~e.g.][]{p80,gk99,DiMa17}
\[
 \inf_{\hat{f}} \sup_{f \in \mathcal{S}_b} \E{\normX{ \hat{f} - f}^2} \ge C_2 (\sigma^2)^{\frac{b}{a+b+1}}\qquad \text{ for some } C_2 > 0.
\]
This concludes the proof. 
\end{proof}

\begin{remark}
The above proposition in particular implies that all the methods in Table~\ref{tab:example} are order optimal with $\alpha = \alpha_{\mathrm{pred}}$ when $b/(2a)+1/2\le v_0$ in the minimax sense. This reproduces the result in~\citet{cg14} for the spectral cut-off method as a special case. 

Note that the parameter choice $\alpha_{\mathrm{pred}}$ depends only on the data, and is completely independent of the unknown truth. Thus, the ordered filter based regularization methods with $\alpha_{\mathrm{pred}}$ automatically adapt to the unknown smoothness of the truth, and achieve the best possible rates up to a constant. In other words, the ordered filter based regularization methods with parameter choice  $\alpha_{\mathrm{pred}}$  are \emph{adaptively minimax optimal}~\citep[cf.][]{p80} over a range of smoothness classes, and the adaptation range is determined by the qualification index of the filter and the smoothing property of the forward operator. Importantly, we point out that the order optimality here is in sharp contrast to the Lepski\u{\i}-type balancing principle~\citep{l90}, where one typically loses a log-factor in the asymptotic convergence rates. 

In addition, we stress again the price we pay for a-posteriori parameter choice $\alpha_{\mathrm{pred}}$ is a stronger qualification assumption $b/(2a)+1/2\le v_0$, as the convergence rates of direct risk for a-priori parameter choice only asks for $b/(2a) \le v_0$ by Theorem~\ref{apriori_choice} (i), see also Remark~\ref{rmk:conv_rate}.
\end{remark}

\subsubsection{Exponentially ill-posed problems}\label{ss:example_serious}
Next we consider an exponentially ill-posed setting: The eigenvalues of $T^*T$ satisfy
\[
\lambda_k \asymp \exp\left(- \mu k^{a}\right)\qquad k = 1,2,\ldots\qquad\text{ for some }  a, \mu > 0,
\]
and the smoothness of the truth is characterized by~\eqref{eq:sc_general} with
\[
\phi(x)\coloneqq x^{b} (-\log x)^c\qquad \text{for some } b > 0, c \in \Rset, \text{ or } b = 0, c > 0.
\]
That is, we assume that the truth lies in $\left\{f \in \X:\sum_{k=1}^\infty w_k f_k^2  \le 1\right\}$ with $w_k \asymp k^{-2ac}\exp(2\mu b k^a)$.

It is easy to see that the assumptions of Theorem~\ref{oracle_ineq} are satisfied for all the regularization methods listed in Table~\ref{tab:example} provided that $0< a <1/2$, or $a = 1/2$ and $\mu$ is sufficiently small. Hence, if in addition $b +1/2 < v_0$, or $b = v_0, c \le 0$ (recall that $v_0$ is the qualification index), we can obtain certain error bounds by means of Theorem~\ref{conv_rate}. However, it turns out that our bounds are too rough to guarantee order optimality. In fact, in case of $b=0$, the error bound on the right-hand side of \eqref{eq:oracle_ineq} even diverges as the noise level $\sigma$ tends to $0$.
In summary, our oracle inequality is applicable for exponentially ill-posed problems as discussed here, but is not strong enough to derive rates of convergence or even show optimality of the investigated parameter choice $\alpha_{\mathrm{pred}}$. {We refer to \cite{w17} for numerical simulations in exponentially ill-posed examples, in the view of which it seems questionable if the parameter choice rule under investigation still yields optimal results, being in line with the findings by \citet{lpbbbdw17}.}

\section{A general methodology for proving the oracle inequality}\label{sec:proofs} 

To prove our oracle inequality \eqref{eq:oracle_ineq}, we will proceed as follows. First, we obtain bounds for general moments of the prediction risk (cf. Corollary \ref{ctrMoment}). Secondly, we proceed with a standard estimate for the (deterministic) bias of the estimator $\hat f_{\alpha}$ under the smoothness Assumption \ref{sc_conv_ass} (cf. Lemma \ref{source_cond}). Finally, we prove a comparison lemma on the variance terms (cf. Lemma \ref{compare}). Putting these three ingredients together, the proof of Theorem~\ref{oracle_ineq} is then straightforward. More importantly, we stress that, if these three ingredients are given for some parameter choice rule $\bar \alpha$, an oracle inequality similar to \eqref{eq:oracle_ineq} can be derived.

\subsection{Main ingredients}

We start with bounds for general moments of the prediction risk.
\begin{corollary}[General moments of prediction risk]\label{ctrMoment}
Assume the same setting as Theorem~\ref{expBnd}, and let $\Psi : \Rset_+ \to \Rset_+$ be an increasing and continuously differentiable function with $\Psi(0) = 0$ such that
\begin{equation}\label{eq:tail_cond}{
C_{\psi}\coloneqq C_\xi'\int_{0}^{\infty} \Psi'(x) \exp\left(-C_\xi''\sqrt{\frac{x}{2}}\right)dx < \infty}
\end{equation}
with $C_\xi', C_\xi''$ being the same as in Theorem~\ref{expBnd}. Then 
\[
\E{\Psi\left(\frac{r(\alpha_{\mathrm{pred}},f)}{\sigma^2}\right)} \le  \Psi \left(\frac{2r(\alpha_o, f)}{\sigma^2}\right) + C_{\Psi} \qquad \text{ for all } f \in \X.
\]
\end{corollary}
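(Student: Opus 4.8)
The plan is to express the desired expectation in terms of the tail distribution of $r(\alpha_{\mathrm{pred}},f)/\sigma^2$ and then feed in the exponential deviation bound from Theorem~\ref{expBnd}. Specifically, write $Z \coloneqq r(\alpha_{\mathrm{pred}},f)/\sigma^2$ and $z_o \coloneqq r(\alpha_o,f)/\sigma^2$. By Theorem~\ref{expBnd} (in its second form, for the deterministic risk evaluated at the random parameter), we have the deviation bound
\[
\Prob{Z - z_o \ge x} \le C_\xi' \exp\left(-C_\xi'' \min\Bigl\{\sqrt{x},\, \frac{x}{\sqrt{z_o}}\Bigr\}\right)\qquad \text{for all } x \ge 0.
\]
Since $\Psi$ is increasing with $\Psi(0)=0$ and continuously differentiable, the layer-cake formula gives $\E{\Psi(Z)} = \int_0^\infty \Psi'(t)\,\Prob{Z > t}\,\mathrm dt$, and I would split the integral at $t = 2z_o$ to exploit that on $\{Z \le 2z_o\}$ the contribution is trivially bounded by $\Psi(2z_o)$.

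First I would handle the region $t \le 2z_o$: here $\Prob{Z > t} \le 1$, so $\int_0^{2z_o}\Psi'(t)\,\Prob{Z>t}\,\mathrm dt \le \Psi(2z_o)$, which produces the first term $\Psi(2r(\alpha_o,f)/\sigma^2)$ in the claimed bound. Next I would treat the tail $t > 2z_o$. Substituting $t = z_o + x$ with $x > z_o$, I use the deviation bound; the crucial observation is that for $x \ge z_o$ one has $x/\sqrt{z_o} \ge \sqrt{x}$ (equivalently $\sqrt{x} \ge \sqrt{z_o}$), so the minimum in the exponent equals $\sqrt{x}$. This collapses the two-regime exponential bound into the single clean form $C_\xi' \exp(-C_\xi''\sqrt{x})$ on the relevant range. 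Then
\[
\int_{2z_o}^\infty \Psi'(t)\,\Prob{Z>t}\,\mathrm dt \le C_\xi'\int_{z_o}^\infty \Psi'(z_o + x)\exp\left(-C_\xi''\sqrt{x}\right)\mathrm dx,
\]
and I would bound $\Psi'(z_o+x)$ in terms of $\Psi'$ evaluated at a multiple of $x$ (since $x > z_o$ gives $z_o + x < 2x$) to reduce to an integral of the same type as in the hypothesis \eqref{eq:tail_cond}.

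The main obstacle is reconciling the argument shift $\Psi'(z_o + x)$ with the integrability condition~\eqref{eq:tail_cond}, which is stated for $\Psi'(x)$ against $\exp(-C_\xi''\sqrt{x/2})$ rather than $\exp(-C_\xi''\sqrt{x})$. The mismatch in the constant inside the square root (the factor $\sqrt{2}$) is precisely what the hypothesis is engineered to absorb: after the change of variables $x \mapsto 2u$ or by using $\sqrt{x} \ge \sqrt{x/2}$ together with $z_o + x \le 2x$ on the range $x > z_o$, the shifted integrand $\Psi'(z_o+x)\exp(-C_\xi''\sqrt{x})$ is dominated by $\Psi'(2x)\exp(-C_\xi''\sqrt{x})$, and a final substitution converts this into exactly the finite quantity $C_\psi$ appearing in \eqref{eq:tail_cond}, up to the universal constants. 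Collecting the two pieces yields $\E{\Psi(Z)} \le \Psi(2z_o) + C_\Psi$ with $C_\Psi$ depending only on $C_\xi'$, $C_\xi''$ and the integral in~\eqref{eq:tail_cond}, which is what is asserted. One should take care that $\Psi'$ is monotone or at least controllable under the argument doubling; if $\Psi'$ is not itself monotone this step needs the mild structural assumption that such doubling is harmless, which holds for the $\Psi$ relevant to Assumption~\ref{surrogate_ass}~(iii).
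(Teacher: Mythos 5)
Your overall strategy is the same as the paper's: the layer-cake identity $\E{\Psi(Z)} = \int_0^\infty \Psi'(t)\,\Prob{Z \ge t}\,\mathrm dt$ for $Z = r(\alpha_{\mathrm{pred}},f)/\sigma^2$, a split at $t = 2z_o$ with the trivial bound $\Prob{Z \ge t}\le 1$ on the first piece (giving $\Psi(2z_o)$), and the observation that on the tail the minimum in Kneip's exponent collapses to the $\sqrt{\cdot}$ branch because the deviation exceeds $z_o$. All of that is correct. The gap is in how you finish the tail term. After substituting $t = z_o + x$ you are left with $\Psi'(z_o+x)$, and you propose to dominate it by $\Psi'(2x)$ using $z_o + x < 2x$. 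That inequality between \emph{arguments} is true, but passing to an inequality between \emph{values} of $\Psi'$ requires $\Psi'$ to be non-decreasing (or some doubling control), and no such assumption appears in the corollary: the hypotheses are only that $\Psi$ is increasing, continuously differentiable, $\Psi(0)=0$, and \eqref{eq:tail_cond}. An increasing $C^1$ function can have a badly oscillating derivative, so as written your proof does not establish the statement in its stated generality; you flag this yourself, but the needed structural assumption is genuinely absent from the statement, and the corollary is later invoked for functions $\Psi_1,\Psi_2$ built from $S^{-1}$ (Lemma \ref{compare}, proof of Theorem \ref{oracle_ineq}) for which monotonicity of the derivative is likewise not guaranteed by the standing assumptions.

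The fix --- and it is exactly what the paper does --- is to never move the shift into the argument of $\Psi'$. Keep the original integration variable: for $t \ge 2z_o$, Theorem \ref{expBnd} gives $\Prob{Z \ge t} = \Prob{Z - z_o \ge t - z_o} \le C_\xi'\exp\bigl(-C_\xi''\sqrt{t - z_o}\bigr)$, where the minimum collapses exactly as you observed (since $t - z_o \ge z_o$); and since $t \ge 2z_o$ also implies $t - z_o \ge t/2$, this is at most $C_\xi'\exp\bigl(-C_\xi''\sqrt{t/2}\bigr)$. Hence the tail integral is bounded by $C_\xi'\int_{2z_o}^\infty \Psi'(t)\exp\bigl(-C_\xi''\sqrt{t/2}\bigr)\,\mathrm dt \le C_\Psi$, with the argument of $\Psi'$ untouched. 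This is precisely what the factor $1/2$ inside the square root in \eqref{eq:tail_cond} is engineered to absorb: the shift $t \mapsto t - z_o$ in the \emph{exponential}, not a dilation of the argument of $\Psi'$. With this one change your argument coincides with the paper's proof and needs no hypotheses beyond those stated.
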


\begin{proof} Let $\gamma_\sigma \coloneqq r(\alpha_o, f)/\sigma^2$ as in~\eqref{eq:oracle_ineq}. Then
\begin{align*}
\E{\Psi\left(\frac{r(\alpha_{\mathrm{pred}}, f)}{\sigma^2}\right)} = & \int_{0}^{\infty} \Psi'(x) \Prob{\frac{r(\alpha_{\mathrm{pred}}, f)}{\sigma^2}  \ge x}\,\mathrm dx &&\text{[by Fubini's theorem]}\\
\le & \int_{0}^{2\gamma_\sigma} \Psi'(x)\,\mathrm dx + C_\xi'\int_{2\gamma_\sigma}^{\infty}\Psi'(x)\exp\left(-C_\xi''\sqrt{x -\gamma_\sigma}\right)\,\mathrm dx &&\text{[by Theorem~\ref{expBnd}]}\\
\le & \Psi\left({2\gamma_\sigma}\right) + C_\xi'\int_{2\gamma_\sigma}^{\infty}\Psi'(x)\exp\left(-C_\xi''\sqrt{\frac{x}{2}}\right)\,\mathrm dx \\
\le & \Psi\left(\frac{2 r(\alpha_o, f)}{\sigma^2}\right) + C_{\Psi}. 
\end{align*}
This concludes the proof. 
\end{proof}
\begin{remark}\label{rmk:moments}
A simple example is $\Psi(x) \coloneqq x^{\theta}$ for some $\theta > 0$. By Corollary~\ref{ctrMoment}, it leads to 
\[
\E{r(\alpha_{\mathrm{pred}},f)^\theta} \le 2^{\theta}r(\alpha_o, f)^{\theta} + C_{\theta} \sigma^{2\theta},
\]
where the second term on the right hand side is typically negligible compared to the first as $\sigma$ tends to $0$. {Note that condition~\eqref{eq:tail_cond} only requires
$$
\int_{c}^{\infty} \Psi'(x) \exp\left(-C_\xi''\sqrt{\frac{x}{2}}\right)dx < \infty \qquad \text{ for some contant } c > 0. 
$$}
\end{remark}

Based on the smoothness Assumption \ref{sc_conv_ass}, we can prove the following estimate for the bias of $\hat f_{\alpha}$:
\begin{lemma}[Source condition]\label{source_cond}
Under Assumption~\ref{sc_conv_ass} (i) and (iii), it holds that
\[
\sum_{k = 1}^{\infty}  (1-s_{\alpha}(\lambda_k))^2f_k^2  \le \rho^2\psi^{-1} \left(\frac{1}{{\rho^2}}\sum_{k=1}^{\infty} \lambda_k  (1-s_{\alpha}(\lambda_k))^2f_k^2 \right),
\]
with $\rho$ {as in \eqref{eq:sc_general}}.
\end{lemma}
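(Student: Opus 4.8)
The plan is to reduce the claimed estimate to a single application of Jensen's inequality for the convex function $\psi$. First I would unpack the source condition Assumption~\ref{sc_conv_ass}~(i): writing $f = \phi(T^*T)w$ with $\normX{w}\le\rho$ and expanding in the SVD basis $\{e_k\}$ gives $f_k = \phi(\lambda_k)w_k$ with $\sum_k w_k^2 = \normX{w}^2 \le \rho^2$. Substituting $f_k^2 = \phi(\lambda_k)^2 w_k^2$ rewrites both sides of the claim as weighted sums over $k$: with the abbreviations $d_k \coloneqq (1-s_\alpha(\lambda_k))^2 w_k^2$ and $x_k \coloneqq \phi(\lambda_k)^2$, the left-hand side is $\sum_k d_k x_k$, while the argument of $\psi^{-1}$ on the right is $\tfrac{1}{\rho^2}\sum_k \lambda_k d_k x_k$.

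The key observation is the exact identity $\psi(\phi(\lambda_k)^2) = \lambda_k\phi(\lambda_k)^2$, which follows immediately from the definition $\psi(x) = x\phi^{-1}(\sqrt x)$ together with $\phi\ge 0$ and $\phi^{-1}(\phi(\lambda_k)) = \lambda_k$. Hence $\sum_k \lambda_k d_k x_k = \sum_k d_k\,\psi(x_k)$, so that, since $\psi^{-1}$ is strictly increasing, the lemma is equivalent to the single inequality
\[
\psi\Bigl(\tfrac{1}{\rho^2}\sum_k d_k x_k\Bigr) \le \tfrac{1}{\rho^2}\sum_k d_k\,\psi(x_k).
\]
In words, $\psi$ acts as the exact transfer map between the $\X$-norm bias $\sum_k d_k x_k$ and the $\Y$-norm bias $\sum_k d_k\,\psi(x_k)$, and it only remains to exploit its convexity.

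To obtain the displayed inequality I would invoke Jensen's inequality, which requires the weights $d_k/\rho^2$ to form a sub-probability distribution. This is the only structural input on the filter that enters: since the residual filter satisfies $\abs{1 - s_\alpha(\lambda_k)} \le 1$ (indeed $0 \le s_\alpha(\lambda)\le 1$ for every method in Table~\ref{tab:example}), one has $\sum_k d_k \le \sum_k w_k^2 \le \rho^2$. To accommodate the fact that the weights may sum to strictly less than $\rho^2$, I would add a dummy atom carrying the leftover mass $\rho^2 - \sum_k d_k \ge 0$ located at the point $0$; because $\psi(0)=0$ (note $\psi$ is convex, strictly increasing with range $\Rset_+$, and $\psi(0)=0$ by its definition), this atom contributes nothing to either side, and Jensen applied to the resulting genuine probability measure yields precisely the displayed inequality. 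Applying $\psi^{-1}$ then gives the assertion.

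I expect the main, and essentially only, obstacle to be recognizing the correct convex-combination structure, namely pulling the factor $(1-s_\alpha(\lambda_k))^2 w_k^2$ into the weights while reserving $\phi(\lambda_k)^2$ as the evaluation points, so that the identity $\psi(\phi(\lambda_k)^2)=\lambda_k\phi(\lambda_k)^2$ makes the role of $\psi$ visible; everything else is the routine verification that the weights sum to at most $\rho^2$ and the bookkeeping with the dummy atom.
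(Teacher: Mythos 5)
Your reduction to Jensen's inequality is the same basic mechanism as the paper's proof (convexity of $\psi$, $\psi(0)=0$, a dummy atom at $0$ absorbing the leftover mass), but your convex-combination structure is genuinely different and, in one respect, cleaner. You absorb $(1-s_\alpha(\lambda_k))^2 w_k^2$ into the weights and evaluate $\psi$ at the points $\phi(\lambda_k)^2$, where the identity $\psi\bigl(\phi(\lambda_k)^2\bigr)=\lambda_k\phi(\lambda_k)^2$ is exact, with normalization coming from $\sum_k w_k^2\le\rho^2$. The paper instead keeps the weights $(1-s_\alpha(\lambda_k))^2$, normalizes them by $c_n\coloneqq 1+\sum_{i=1}^n(1-s_\alpha(\lambda_i))^2$, evaluates $\psi$ at $\rho^{-2}f_k^2$, and uses the pointwise bound $\psi\bigl(\rho^{-2}f_k^2\bigr)\le\rho^{-2}\lambda_k f_k^2$ obtained from $f_k^2\le\rho^2\phi(\lambda_k)^2$.

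There is, however, one genuine gap, and it is exactly the step you dismissed as routine: the weight bound $\sum_k(1-s_\alpha(\lambda_k))^2w_k^2\le\rho^2$ needs $\abs{1-s_\alpha(\lambda_k)}\le 1$, which you justify only by inspecting Table~\ref{tab:example}. The lemma is stated under Assumption~\ref{sc_conv_ass}~(i) and (iii) for a filter in the sense of Definition~\ref{def:order_filter}, and that definition only yields $\abs{s_\alpha(\lambda)}\le C_q''$, with no requirement that $C_q''\le 1$ or that $q_\alpha\ge 0$; so the residual bound is simply not available in the stated generality. Nor is this cosmetic: without it the asserted inequality is false. Take $\phi(t)=\sqrt{t}$, so that $\psi(x)=x^2$, let $w=\rho e_1$ (hence $f_1=\sqrt{\lambda_1}\rho$, all other $f_k=0$), and take a filter with $s_\alpha(\lambda_1)>2$, e.g.\ $q_\alpha(\lambda)=3/(\lambda+\alpha)$ with $\alpha<\lambda_1/2$, admissible in Definition~\ref{def:order_filter} with $C_q'=C_q''=3$; then the left-hand side of the lemma equals $(1-s_\alpha(\lambda_1))^2\lambda_1\rho^2$ while the right-hand side equals $\lambda_1\rho^2\abs{1-s_\alpha(\lambda_1)}$, so the claim fails by the factor $\abs{1-s_\alpha(\lambda_1)}>1$. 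Interestingly, the paper's own proof is not immune to this: its normalization trick requires, as the first inequality of its displayed chain, $\psi(A)\le c_n\,\psi(A/c_n)$, which for convex $\psi$ with $\psi(0)=0$ goes the wrong way (convexity gives $c_n\,\psi(A/c_n)\le\psi(A)$). So the hypothesis you used, say $0\le s_\alpha(\lambda)\le 2$, is genuinely needed; it holds for every method in Table~\ref{tab:example}, but you should promote it from a parenthetical aside to an explicit assumption of the lemma. With that hypothesis stated, your proof is complete---and on firmer footing than the printed argument.
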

\begin{proof}
Based on spectral analysis, condition~\eqref{eq:sc_general} can be equivalently written as
\[
f_k = \phi(\lambda_k) w_k, \quad k = 1,2,\ldots, \qquad \text{ with } \sum_{k = 1}^{\infty} w_k^2 \le \rho^2.
\]
Thus, $f_k^2 \le \rho^2 \phi(\lambda_k)^2$ and then $\phi^{-1}(\sqrt{\rho^{-2}f_k^2}) \le \lambda_k$.  {Note that $\psi(0) = 0$ and $\psi(cx) \ge c\psi(x)$ for any $c \ge 1$. By the convexity of $\psi$ and Jensen's inequality, we have for any $n \in \mathbb N$
\begin{align*}
\psi\Bigl(\sum_{k = 1}^{n} \rho^{-2} f_k^2 \left(1-s_{\alpha}(\lambda_k)\right)^2\Bigr) & \le\, \psi\Bigl(\sum_{k = 1}^{n} \rho^{-2} f_k^2 \frac{\left(1-s_{\alpha}(\lambda_k)\right)^2}{1 + \sum_{i = 1}^n\left(1-s_{\alpha}(\lambda_i)\right)^2}\Bigr)\Bigl(1 + \sum_{i = 1}^n\left(1-s_{\alpha}(\lambda_i)\right)^2\Bigr) \\
& \le \,\sum_{k = 1}^{n} \psi\bigl( \rho^{-2} f_k^2\bigr) \frac{\left(1-s_{\alpha}(\lambda_k)\right)^2}{1+\sum_{i = 1}^n\left(1-s_{\alpha}(\lambda_i)\right)^2}\Bigl(1+\sum_{i = 1}^n\left(1-s_{\alpha}(\lambda_i)\right)^2\Bigr) \\
& = \,
 \sum_{k=1}^{n} \psi\bigl(\rho^{-2} f_k^2\bigr)\left(1 - s_{\alpha}(\lambda_k)\right)^2 
 \le \rho^{-2} \sum_{k = 1}^{n} \lambda_k f_k^2 \left(1 - s_{\alpha}(\lambda_k)\right)^2\\
& \le\, \rho^{-2} \sum_{k = 1}^{\infty} \lambda_k f_k^2 \left(1 - s_{\alpha}(\lambda_k)\right)^2.
\end{align*}
The assertion follows by applying $\psi^{-1}$ to the above inequality and letting $n\nearrow \infty$.}
\end{proof}

Finally, we need certain comparison relations, which are {used} for bounding the variance part of the risk: 
\begin{lemma}[Comparison]\label{compare}
Let $q_{\alpha}$ be a filter, and Assumptions~\ref{surrogate_ass} and~\ref{para_filter_ass} hold. Then there are positive constants $C_1$ and $C_2$, {depending only on $C_q', C_q'', C_q, C_S$, $c_q$ and the operator $T$,} such that for every $\alpha \in \mathcal{A}$
\[
\sum_{k=1}^{\infty} \lambda_k q_{\alpha}(\lambda_k)^2 \le C_1 \Psi_1 \left( C_q \sum_{k=1}^{\infty} s_{\alpha}(\lambda_k)^2\right)\quad \text{and}\quad \sum_{k=1}^{\infty} \lambda_k^2 q_{\alpha}(\lambda_k)^4 \le C_2 \Psi_2 \left(C_q\sum_{k=1}^{\infty} s_{\alpha}(\lambda_k)^2\right), 
\]
{with $C_q', C_q''$ in Definition~\ref{def:order_filter}, $C_q, C_S$ in Assumption~\ref{surrogate_ass}, $c_q$ in Assumption~\ref{para_filter_ass}, and functions $\Psi_1$ and $\Psi_2$ by}
\begin{equation}\label{eq:Psis}
\Psi_1\left(x\right) \coloneqq \frac{x}{S^{-1} \left(x\right)}, \qquad \Psi_2\left(x\right) \coloneqq \frac{x}{\left(S^{-1} \left(x\right)\right)^2}, \qquad \text{ for every } x>0.
\end{equation}
\end{lemma}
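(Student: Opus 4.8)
\textbf{Proof proposal for Lemma~\ref{compare}.}
The plan is to convert both sums into Lebesgue--Stieltjes integrals against the counting measure induced by $\Sigma(x)=\#\{k:\lambda_k\ge x\}$, replace $\Sigma$ by the smooth surrogate $S$ from Assumption~\ref{surrogate_ass}, and then reduce everything to a comparison between the ``degrees of freedom'' quantity $\sum_k s_\alpha(\lambda_k)^2$ and the effective cutoff scale $S^{-1}\bigl(C_q\sum_k s_\alpha(\lambda_k)^2\bigr)$. The key geometric idea is that, because $s_\alpha$ is non-decreasing in $\lambda$ (Assumption~\ref{para_filter_ass}~(i)), $q_\alpha=s_\alpha/\lambda$ behaves like $1/\lambda$ for $\lambda\gtrsim\alpha$ and is cut off for $\lambda\lesssim\alpha$; the filter bounds $\lambda|q_\alpha(\lambda)|\le C_q''$ and $\alpha|q_\alpha(\lambda)|\le C_q'$ then let me control $\lambda_k q_\alpha(\lambda_k)^2$ by $s_\alpha(\lambda_k)^2/\lambda_k$ and, crucially, bound $1/\lambda_k$ from above by $1/\beta$ on the effective range, where $\beta\coloneqq S^{-1}\bigl(C_q\sum_k s_\alpha(\lambda_k)^2\bigr)$ is the natural scale at which the sum of degrees of freedom is realized.

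First I would establish the bridge between the regularization scale $\alpha$ and the surrogate: using Assumption~\ref{para_filter_ass}~(ii), namely $s_\alpha(\alpha)\ge c_q$, together with the monotonicity of $s_\alpha$ in $\lambda$, I would show $\sum_k s_\alpha(\lambda_k)^2 \gtrsim \sum_{k:\lambda_k\ge\alpha}c_q^2 = c_q^2\,\Sigma(\alpha)\approx c_q^2\,S(\alpha)$. Inverting, and absorbing the factor $C_q$ so that the hypothesis $c_q>C_q^{-1/2}$ (equivalently $C_q c_q^2>1$) guarantees $C_q\sum_k s_\alpha(\lambda_k)^2 \ge S(\alpha)$, I would obtain $\beta=S^{-1}\bigl(C_q\sum_k s_\alpha(\lambda_k)^2\bigr)\le\alpha$ up to constants, since $S^{-1}$ is strictly decreasing. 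This comparison $\beta\lesssim\alpha$ is what makes $1/\lambda_k\le 1/\beta$ usable on the whole effective support of the filter.

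With this in hand I would bound the first sum by writing $\lambda_k q_\alpha(\lambda_k)^2 = s_\alpha(\lambda_k)^2/\lambda_k$ and estimating $1/\lambda_k$ by $1/\beta$ on the range where $s_\alpha(\lambda_k)$ is non-negligible (using the filter bound $\alpha|q_\alpha|\le C_q'$ to control the tail where $\lambda_k<\beta$), yielding
\[
\sum_{k=1}^\infty \lambda_k q_\alpha(\lambda_k)^2 \lesssim \frac{1}{\beta}\sum_{k=1}^\infty s_\alpha(\lambda_k)^2 = \frac{C_q^{-1}\cdot C_q\sum_k s_\alpha(\lambda_k)^2}{S^{-1}\bigl(C_q\sum_k s_\alpha(\lambda_k)^2\bigr)} \lesssim \Psi_1\Bigl(C_q\sum_{k=1}^\infty s_\alpha(\lambda_k)^2\Bigr).
\]
The second sum is handled identically, replacing one factor of $1/\lambda_k$ by a second one (legitimate since $\lambda_k^2 q_\alpha(\lambda_k)^4 = s_\alpha(\lambda_k)^4/\lambda_k^2$ and $s_\alpha\le C_q''$), which produces $1/\beta^2$ and hence $\Psi_2$. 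The constants $C_1,C_2$ collect the front factors $C_q',C_q'',C_q,C_S,c_q$ and the operator-dependent constant from the surrogate approximation $S\approx\Sigma$.

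I expect the main obstacle to be making the integral-comparison rigorous near the cutoff, i.e.\ controlling the transition region where $\lambda_k\approx\beta$ and $s_\alpha$ is neither close to its saturation value $s_\alpha(\alpha)\ge c_q$ nor negligible. Here the non-smoothness of $\Sigma$ forces the use of Assumption~\ref{surrogate_ass}~(ii), the doubling-type bound $\tfrac{1}{\alpha}\int_0^\alpha S(t)\,dt\le C_S S(\alpha)$, which lets me trade the true count $\Sigma$ for $S$ while keeping the multiplicative constants under control; this is precisely the step where $C_S$ enters. The subtle point is verifying that the effective-support argument does not leak mass from the tail $\lambda_k<\beta$: there I must rely on the filter decay $\alpha|q_\alpha(\lambda)|\le C_q'$ to show that the contribution of those terms is summable and comparable to the main term rather than dominating it, so that the single scale $\beta$ genuinely captures both the magnitude and the support of the variance.
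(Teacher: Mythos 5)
Your core argument is, in essence, the paper's own proof of its main case: you lower-bound $\sum_k s_\alpha(\lambda_k)^2$ by $c_q^2\,\Sigma(\alpha)\approx c_q^2\,S(\alpha)$ via Assumption~\ref{para_filter_ass}, use the strict inequality $c_q^2C_q>1$ to absorb the approximation losses and conclude $S(\alpha)\le C_q\sum_k s_\alpha(\lambda_k)^2$, hence $\beta\coloneqq S^{-1}\bigl(C_q\sum_k s_\alpha(\lambda_k)^2\bigr)\le\alpha$, and you control the left-hand sums with the two filter bounds plus the doubling condition of Assumption~\ref{surrogate_ass}~(ii). The only (immaterial) variation is that the paper splits its Stieltjes integrals at $\alpha$ and applies the doubling bound at $\alpha$, whereas you split at $\beta$ and apply it at $\beta$; both routes reduce to $S(\alpha)/\alpha\le\Psi_1\bigl(C_q\sum_k s_\alpha(\lambda_k)^2\bigr)$ and its analogue for $\Psi_2$.

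There is, however, a genuine gap: the two hypotheses your bridge rests on are only \emph{asymptotic}. Assumption~\ref{para_filter_ass}~(ii) gives $s_\alpha(\alpha)\ge c_q$ only as $\alpha\searrow 0$, and Assumption~\ref{surrogate_ass}~(i) gives $\Sigma(\alpha)/S(\alpha)\to 1$ only as $\alpha\searrow 0$. Consequently your entire argument is valid only for $\alpha\le\alpha_0$, where $\alpha_0>0$ is a threshold determined by these limits, while the lemma asserts the inequalities for \emph{every} $\alpha$ in the bounded set $\mathcal A$. The paper therefore runs a second, separate case $\alpha>\alpha_0$, handled by elementary means: there $\sum_k\lambda_k q_\alpha(\lambda_k)^2\le (C_q')^2\alpha_0^{-2}\,\Tr(T^*T)$ and $\sum_k\lambda_k^2 q_\alpha(\lambda_k)^4\le C_q''(C_q')^3\alpha_0^{-3}\,\Tr(T^*T)$ — this is precisely where the dependence of $C_1,C_2$ on the operator $T$ enters — while the right-hand sides remain bounded away from zero, so the claim follows after enlarging the constants. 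Without some such argument for $\alpha$ bounded away from zero, your proof is incomplete.

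A further caution on your phrase ``$\beta\le\alpha$ up to constants'': a multiplicative slack \emph{inside} $S^{-1}$ cannot be repaired, since no regularity of the form $S(ct)\le C\,S(t)$ is assumed. If the bridge only yielded $C_q\sum_k s_\alpha(\lambda_k)^2\ge c\,S(\alpha)$ with $c<1$, then for instance for $S(t)=\log(1+1/t)$ one gets $S^{-1}\bigl(c\,S(\alpha)\bigr)\asymp\alpha^{c}\gg\alpha$ as $\alpha\searrow0$, and the step $1/\lambda_k\le 1/\beta$ on $\{\lambda_k\ge\beta\}$ no longer connects to $\alpha$ at all. So the absorption must be carried out exactly as you hint — choosing the margin afforded by the strict inequality $c_q>C_q^{-1/2}$ to swallow both the $\Sigma/S$ ratio and the asymptotic defect in $s_\alpha(\alpha)\ge c_q$ — so as to obtain the \emph{exact} inequality $S(\alpha)\le C_q\sum_k s_\alpha(\lambda_k)^2$, which is what the paper does by fixing a suitable $\delta<1$ with $\delta^2c_q^2C_q\ge 1$.
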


\begin{proof} {
By Assumptions~\ref{surrogate_ass} (i) and~\ref{para_filter_ass} (ii), and $c_q^2C_q > 1$, there are $\alpha_0$, $\alpha_0 \in (0,  \min\{1, \alpha_1\})$, with $\alpha_1$ in Assumption~\ref{surrogate_ass} (ii), and some constant $\delta\in(0,1)$ such that for every $\alpha \le \alpha_0$ it holds that 
\begin{equation}\label{eq:smAlp}
s_{\alpha}(\alpha)\ge \delta c_q,\qquad \text{ and }\qquad \frac{1}{\delta^2 c_q^2C_q} \le \frac{\Sigma(\alpha)}{S(\alpha)} \le 2.
\end{equation}
We consider two separate cases:}

\textbf{Case I:} $\alpha \le \alpha_0$. Then   
\begin{align}
\sum_{k = 1}^{\infty} s_{\alpha}(\lambda_k)^2 & = -\int_0^{\infty} t^2 q_{\alpha}(t)^2\,\mathrm d\Sigma(t) \ge -\int_{\alpha}^{\infty} t^2 q_{\alpha}(t)^2\,\mathrm d\Sigma(t) && \nonumber \\
& \ge -\int_{\alpha}^{\infty} \alpha^2 q_{\alpha}(\alpha)^2\,\mathrm d\Sigma(t) && \text{[by Assumption~\ref{para_filter_ass} (i)]} \nonumber \\
& \ge  -{\delta^2}c_q^2 \int_{\alpha}^{\infty}\,\mathrm d\Sigma(t) = {\delta^2}c_q^2 \Sigma(\alpha)   && \text{[by  \eqref{eq:smAlp}]} \nonumber \\
& {\ge\frac{1}{C_q} S(\alpha).} &&{\text{[by \eqref{eq:smAlp}]}}  \label{eq:loBnd_s}
\end{align}
{
Denote $\tilde C_q \coloneqq \max\{C_q', C_q''\}$. Then}
\begin{align*}
\sum_{k=1}^{\infty} \lambda_k q_{\alpha}(\lambda_k)^2 & = - \int_0^{\infty} t q_{\alpha}(t)^2\,\mathrm d\Sigma(t) && \\
& \le {\tilde C_q^2}\left(-\int_0^{\alpha}\frac{t}{\alpha^2}\,\mathrm d\Sigma(t) - \int_{\alpha}^{\infty} \frac{1}{t}\,\mathrm d\Sigma(t)\right) && \text{[by Definition~\ref{def:order_filter} (i)]} \\
& \le {\tilde C_q^2}\left(-\frac{t}{\alpha^2} \Sigma(t)\Big\vert_0^{\alpha} + \frac{1}{\alpha^2} \int_0^{\alpha} \Sigma(t)\,\mathrm dt + \frac{1}{\alpha} \Sigma(\alpha) \right) =  \frac{{\tilde C_q^2}}{\alpha^2} \int_0^{\alpha} \Sigma(t)\,\mathrm dt&& \\
& \le \frac{{2\tilde C_q^2}}{\alpha^2}\int_0^{\alpha} S(t)\,\mathrm dt && \text{[by  \eqref{eq:smAlp}]}\\ 
& \le {2\tilde C_q^2C_S}\frac{S(\alpha)}{\alpha}, && \text{[by~Assumption~\ref{surrogate_ass} (ii)]}
\end{align*}
and similarly
\begin{align*}
\sum_{k=1}^{\infty} \lambda_k^2 q_{\alpha}(\lambda_k)^4 & = - \int_0^{\infty} t^2 q_{\alpha}(t)^4\,\mathrm d\Sigma(t) && \\
& \le {\tilde C_q^4}\left(-\int_0^{\alpha}\frac{t^2}{\alpha^4}\,\mathrm d\Sigma(t) - \int_{\alpha}^{\infty} \frac{1}{t^2}\,\mathrm d\Sigma(t)\right) && \text{[by Definition~\ref{def:order_filter} (i)]} \\
& \le {\tilde C_q^4}\left(-\frac{t^2}{\alpha^4} \Sigma(t)\Big\vert_0^{\alpha} + \frac{1}{\alpha^4} \int_0^{\alpha} 2t \Sigma(t)\,\mathrm dt + \frac{1}{\alpha^2} \Sigma(\alpha) \right)  && \\
& \le  \frac{{2\tilde C_q^4}}{\alpha^3} \int_0^{\alpha} \Sigma(t)\,\mathrm dt\le \frac{{4 \tilde C_q^4}}{\alpha^3}\int_0^{\alpha} S(t)\,\mathrm dt  && \text{[by \eqref{eq:smAlp}]}\\
& \le {{4\tilde C_q^4}{C_S}}\frac{S(\alpha)}{\alpha^2}. && \text{[by Assumption~\ref{surrogate_ass} (ii)]}
\end{align*}
These together with~\eqref{eq:loBnd_s} prove the assertion {if $\alpha \le \alpha_0$.}

\textbf{Case II:} $\alpha > \alpha_0$.Then by Assumption~\ref{para_filter_ass} it holds that
\[
\sum_{k = 1}^{\infty} s_{\alpha}(\lambda_k)^2 {\ge \sum_{k = 1}^{\infty} s_{\alpha_0}(\lambda_k)^2,\qquad \text{a constant }\in\Rset_+.} 
\]
By Definition~\ref{def:order_filter} (i) we have{
\begin{align*}
&\sum_{k = 1}^{\infty} \lambda_k q_{\alpha}(\lambda_k)^2 \le\frac{(C'_q)^2}{\alpha^2} \sum_{k = 1}^{\infty} \lambda_k \le \frac{(C'_q)^2}{\alpha_0^2} \Tr(T^*T),\\ 
\text{ and }\quad &\sum_{k = 1}^{\infty} \lambda_k^2 q_{\alpha}(\lambda_k)^4 \le \frac{C''_q (C'_q)^3}{\alpha^3} \sum_{k = 1}^{\infty} \lambda_k \le \frac{C''_q (C'_q)^3}{\alpha_0^3}\Tr(T^*T), 
\end{align*}}
where both upper bounds are constants in $\Rset_+$. Thus, the assertion clearly holds {for $\alpha > \alpha_0$.}

Combining the above two cases concludes the proof. 
\end{proof}

\subsection{Proof of Theorem \ref{oracle_ineq}}

Now we are in position to prove Theorem \ref{oracle_ineq}. To obtain bounds, we will split the variance part by exploiting a technique from \citet{g10} dealing with ordered processes:
\begin{lemma}[Ordered processes~\citep{g10}]\label{od_process}
Assume that {we are given} a sequence of functions  $c_k: \mathcal{A} \to \Rset$, $k = 1, 2, \ldots$, with $\mathcal{A} \subseteq \Rset_+$, satisfying {$\sum_{k = 1}^\infty \abs{c_k(\alpha)} < \infty$ and}
\[
\kappa(\alpha) \coloneqq \left(\sum_{k = 1}^{\infty} c_k(\alpha)^2\right)^{1/2} < \infty \qquad \text{ for } \alpha \in \mathcal{A}, 
\]
and that $\kappa$ is continuous and strictly monotone on $\mathcal{A}$. Define  $(x)_+ \coloneqq \max\{x, 0\}$ and
\[
\zeta(\alpha) \coloneqq \sum_{k=1}^{\infty} c_k(\alpha) (\xi_k^2 - 1)\qquad\text{ with } \xi_k \stackrel{iid}{\sim} \mathcal{N}(0,1).
\]
Then there exists a {universal} constant $C_\xi$ such that
\[
\E{\sup_{\alpha \in \mathcal{A}}\left(\zeta(\alpha) - x\kappa(\alpha)^2\right)_+} \le \frac{C_\xi}{x} \qquad \text{ for all }x >0.
\]
\end{lemma}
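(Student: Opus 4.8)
The plan is to exploit the strict monotonicity of $\kappa$ to reparametrize the process by its own standard deviation, and then to reduce the uniform control to a one-parameter tail estimate via peeling. Since $\kappa$ is continuous and strictly monotone on $\mathcal{A}$, it is a homeomorphism onto an interval $V \subseteq \Rset_+$; writing $\alpha(v)$ for the inverse and $\tilde c_k(v) \coloneqq c_k(\alpha(v))$, $\tilde\zeta(v) \coloneqq \zeta(\alpha(v))$, the goal becomes $\mathbb{E}\left[\sup_{v \in V}\left(\tilde\zeta(v) - x v^2\right)_+\right] \le C_\xi/x$, where now $\sum_k \tilde c_k(v)^2 = v^2$. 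Note that $\tilde\zeta(v)$ is centered with variance $2v^2$, is sub-gamma (the variables $\xi_k^2-1$ are bounded below by $-1$ and sub-exponential above), and that its sub-exponential scale $b_v \coloneqq \max_k |\tilde c_k(v)|$ satisfies $b_v \le \kappa(\alpha(v)) = v$.

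The pointwise ingredient is a Bernstein-type deviation bound for quadratic forms in Gaussians (of Laurent--Massart type), giving $\mathbb{P}\left(\tilde\zeta(v) \ge 2v\sqrt{t} + 2b_v t\right) \le e^{-t}$ for $t>0$. Using $b_v \le v$, to force $\tilde\zeta(v) > s + x v^2$ it suffices to take $t$ of order $(s + x v^2)/v$, and the elementary inequality $s/v + x v \ge 2\sqrt{sx}$ (optimal at the scale $v^\ast \asymp \sqrt{s/x}$) shows the exponent is at least of order $\sqrt{sx}$, uniformly in $v$. This is exactly a ``tail complete-the-square'': the quadratic penalty $x v^2$ balances the sub-exponential fluctuations precisely at one scale and yields the Weibull-type pointwise tail $\mathbb{P}\left(\tilde\zeta(v) - xv^2 > s\right) \le C' \exp\left(-c'\sqrt{sx}\right)$, consistent with the $\exp(-C_\xi''\sqrt{\cdot})$ behaviour already appearing in Theorem~\ref{expBnd} and Corollary~\ref{ctrMoment}.

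To upgrade this to the supremum over the continuum $v \in V$, I would peel $V$ into dyadic shells $V_j = \{v : v \in [2^j, 2^{j+1})\}$ and bound $\mathbb{P}\left(\sup_{v \in V_j}\tilde\zeta(v) > s + x 2^{2j}\right)$ shell by shell. Here the ordered structure is indispensable: following \citet{g10}, the monotone reparametrization lets one replace the shell-supremum by the values on a countable skeleton together with a Doob/L\'evy-type maximal inequality for the ordered process, so that the shell-supremum obeys a tail of the same exponential form as the pointwise one, up to universal multiplicative constants. Since the pointwise exponents $\sqrt{s\,x}$ are minimized at the single scale $v^\ast$ and grow away from it, summing the shell bounds over $j$ is a convergent geometric-type series and preserves $\mathbb{P}\left(\sup_{v}(\tilde\zeta(v) - x v^2) > s\right) \le C \exp\left(-c\sqrt{sx}\right)$.

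Finally I would integrate the layer-cake identity $\mathbb{E}\left[(\sup_v(\tilde\zeta(v)-xv^2))_+\right] = \int_0^\infty \mathbb{P}\left(\sup_v(\tilde\zeta(v)-xv^2) > s\right)\,\mathrm ds$; the substitution $u = \sqrt{sx}$ turns $\int_0^\infty C e^{-c\sqrt{sx}}\,\mathrm ds$ into $C''/x$, with $C''$ depending only on the law of $\xi_k^2-1$ (hence on the universal constants $C_\xi', C_\xi''$), which is the asserted universal $C_\xi$. The hard part is the uniformization step, namely passing from the single-$v$ tail to the supremum over the continuum without spoiling universality: it is precisely here that continuity and strict monotonicity of $\kappa$ do the work by furnishing the ordering needed for a maximal inequality, and the accompanying subtlety is that the sub-exponential part of each pointwise tail (the $b_v t$ term) must be absorbed by the quadratic penalty uniformly across all scales, which $b_v \le v$ together with the balancing at $v^\ast$ guarantees.
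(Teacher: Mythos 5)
Your three-step architecture (pointwise sub-gamma tail, dyadic peeling in the variance scale, layer-cake integration with the substitution $u=\sqrt{sx}$) is sensible, and the first and last steps would go through. But the step you yourself flag as the hard one --- passing from a single scale to the supremum over a shell --- contains a genuine gap, and the mechanism you invoke does not repair it. Continuity and strict monotonicity of $\kappa$ order only the scalar variances, not the process: two parameters with nearly equal $\kappa$ may have nearly orthogonal coefficient vectors, so no Doob/L\'evy-type maximal inequality follows from the reparametrization alone, and the shell supremum cannot be reduced to a countable skeleton with universal constants. In fact, under exactly the hypotheses you use the conclusion is false. Take $\mathcal A=[0,1]$, enumerate $\mathbb Q\cap[0,1]=\{\alpha_j\}_{j\in\mathbb N}$, and set $c_k(\alpha_j)=(1+\alpha_j)\ind_{\{k=j\}}$ and $c_k(\alpha)=(1+\alpha)\ind_{\{k=1\}}$ for irrational $\alpha$. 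Then $\kappa(\alpha)=1+\alpha$ is continuous and strictly increasing and $\sum_k\abs{c_k(\alpha)}<\infty$, yet $\sup_{\alpha\in\mathcal A}\left(\zeta(\alpha)-x\kappa(\alpha)^2\right)_+\ge\sup_{j}\left(\xi_j^2-1-4x\right)_+=\infty$ almost surely (Borel--Cantelli), so the expectation is infinite.

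What actually makes the uniformization work --- and what the paper's two-line proof invokes by observing that $\zeta$ is an \emph{ordered process} and citing Lemmata 1 and 2 of \citet{g10} --- is the componentwise monotonicity of the coefficient functions: for any $\alpha_1,\alpha_2\in\mathcal A$, either $c_k(\alpha_1)\le c_k(\alpha_2)$ for all $k$ simultaneously, or the reverse. In the application this comes from Definition~\ref{def:order_filter} (ii), since $c_k(\alpha)=\lambda_k q_\alpha(\lambda_k)^2$ inherits the ordering of the filter; it is this property, not the monotonicity of $\kappa$, that yields the increment bound $\sum_k\left(c_k(\alpha_1)-c_k(\alpha_2)\right)^2\le\abs{\kappa(\alpha_1)^2-\kappa(\alpha_2)^2}$ (via $(a-b)^2\le a^2-b^2$ for $0\le b\le a$), which is exactly the chaining ingredient a maximal inequality with universal constants requires. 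So your plan can be completed, but only after adding the componentwise ordering as a hypothesis (it is implicit in the lemma's statement, which is loose on this point, and explicit in Golubev's lemmata) and proving, rather than asserting, the shell maximal inequality from it; at that point you would essentially have reproduced the content of the result that the paper simply cites.
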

\begin{proof}
Note that $\zeta(\alpha)$ is almost surely finite {by means of Chebyshev's inequality and the fact that $\Var{\zeta(\alpha)} = 2\kappa(\alpha) < \infty$ (or alternatively, by Kolmogorov's three-series theorem).} In the terminology of~\citet{g10}, $\zeta(\alpha)$ is an ordered process. The assertion can be proven in exactly the same way as Lemmata 1 and 2 by~\cite{g10}. 
\end{proof}

\begin{proof}[Proof of Theorem \ref{oracle_ineq}]
Consider the bias-variance decomposition
\begin{align}
\E{\normX{\hat{f}_{\alpha_{\mathrm{pred}}} - f}^2} & \le 2\E{\normX{(q_{\alpha_{\mathrm{pred}}}(T^*T)T^*T- I)f}^2} + 2\sigma^2\E{\normX{q_{\alpha_{\mathrm{pred}}}(T^*T)T^*\xi}^2}\nonumber \\
& = 2\E{\sum_{k=1}^{\infty} \left(1-s_{\alpha_{\mathrm{pred}}}(\lambda_k)\right)^2f_k^2} + 2\sigma^2\E{\sum_{k = 1}^{\infty} \lambda_k q_{\alpha_{\mathrm{pred}}}(\lambda_k)^2\xi_k^2} \label{eq:vb_decomp}. 
\end{align}

For the first term (i.e.~bias part) in~\eqref{eq:vb_decomp}, we have for all {$f\in\mathcal{W}_\phi\left(\rho\right)$}
\begin{align*}
\E{\sum_{k=1}^{\infty} f_k^2\left(1-s_{\alpha_{\mathrm{pred}}}(\lambda_k)\right)^2} & \le C_1 \E{\psi^{-1} \Bigl(\frac{1}{C_1}\sum_{k=1}^{\infty} \lambda_k (1-s_{\alpha_{\mathrm{pred}}}(\lambda_k))^2 f_k^2 \Bigr)} && \text{[by Lemma~\ref{source_cond}]} \\
& \le C_1 \E{\psi^{-1} \Bigl(\frac{1}{C_1}r(\alpha_{\mathrm{pred}}, f) \Bigr)} && \text{[by~\eqref{eq:pred_risk}]} \\
& \le C_1 \psi^{-1}\Bigl(\frac{1}{C_1}\E{r(\alpha_{\mathrm{pred}}, f)}\Bigr) && \text{[by Jensen's inequality]} \\
& \le C_1 \psi^{-1}\Bigl(\frac{2}{C_1} r(\alpha_o, f)+ C_2\sigma^2\Bigr), && \text{[by Remark~\ref{rmk:moments}]}
\end{align*}
{where $C_1 \coloneqq \rho^2$ with $\rho$ in~\eqref{eq:sc_general}, and $C_2$ is a universal constant.} 

For the second term (i.e.~variance part)  in~\eqref{eq:vb_decomp}, we further split it into two terms
\begin{equation}\label{eq:var}
\sigma^2\E{\sum_{k = 1}^{\infty} \lambda_k q_{\alpha_{\mathrm{pred}}}(\lambda_k)^2\xi_k^2} = \sigma^2 \E{\sum_{k = 1}^{\infty} \lambda_k q_{\alpha_{\mathrm{pred}}}(\lambda_k)^2} +  \sigma^2\E{\sum_{k = 1}^{\infty} \lambda_k q_{\alpha_{\mathrm{pred}}}(\lambda_k)^2(\xi_k^2-1)}. 
\end{equation} 
Note that $\Psi_1'(x) \le S^{-1}(x) \Psi_2'(x)$ for functions $\Psi_1$ and $\Psi_2$ in \eqref{eq:Psis}, so by monotonicity of $S$ and Assumption~\ref{surrogate_ass} (iii), condition~\eqref{eq:tail_cond} is satisfied with $\Psi(x) = \Psi_1(C_q x)$ as well. Then, for the first term in~\eqref{eq:var}, it holds that
\begin{align*}
\sigma^2 \E{\sum_{k = 1}^{\infty} \lambda_k q_{\alpha_{\mathrm{pred}}}(\lambda_k)^2} & \le C_3 \sigma^2 \E{ \Psi_1\left(C_q\sum_{k=1}^{\infty} s_{\alpha_{\mathrm{pred}}}(\lambda_k)^2\right)} &&\text{[by Lemma~\ref{compare}]} \\
& \le C_3 \sigma^2\E{\Psi_1\left(C_q\frac{r(\alpha_{\mathrm{pred}}, f)}{\sigma^2}\right)} && \text{[by~\eqref{eq:pred_risk}]} \\
& \le C_3 \sigma^{2}\left( \Psi_1\left(2C_q\frac{r(\alpha_o, f)}{\sigma^2}\right) + C_4\right) && \text{[by Corollary~\ref{ctrMoment}]} \\
& = 2 C_3 C_q \frac{r(\alpha_o, f)}{S^{-1}\left(2C_q\frac{r(\alpha_o, f)}{\sigma^2}\right)} + C_3 C_4 \sigma^2, &&
\end{align*}
{where constant $C_3$ depends only on $C_q', C_q'', C_q, C_S$, $c_q$ and the operator $T$, and constant $C_4$ is universal.}

For the second term in~\eqref{eq:var}, we apply Lemma~\ref{od_process} with $c_k(\alpha) = \lambda_k q_{\alpha}(\lambda_k)^2$, which leads to
\begin{align*}
\sigma^2\E{\sum_{k = 1}^{\infty} \lambda_k q_{\alpha_{\mathrm{pred}}}(\lambda_k)^2(\xi_k^2-1)} & \le \sigma^2 \E{\left(\zeta(\alpha_{\mathrm{pred}}) - x\kappa(\alpha_{\mathrm{pred}})^2\right)_+} +\sigma^2 \E{x\kappa(\alpha_{\mathrm{pred}})^2} \\
& \le \sigma^2 \E{\sup_{\alpha \in \mathcal{A}}\left(\zeta(\alpha) - x\kappa(\alpha)^2\right)_+} +\sigma^2 \E{x\kappa(\alpha_{\mathrm{pred}})^2} \\
& \le \sigma^2\frac{C_5}{x} + x\sigma^2\E{\sum_{k = 1}^{\infty} \lambda_k^2 q_{\alpha_{\mathrm{pred}}}(\lambda_k)^4}\qquad \text{ for all } x > 0,
\end{align*}
{where $C_5$ is a universal constant}, and $\zeta$ and $\kappa$ are defined in Lemma~\ref{od_process}.

We minimize the right hand side of the above equation over $x >0$, and then obtain 
\begin{align*}
\sigma^2\E{\sum_{k = 1}^{\infty} \lambda_k q_{\alpha_{\mathrm{pred}}}(\lambda_k)^2(\xi_k^2-1)} & \le 2\sigma^2\left(C_5 \E{\sum_{k = 1}^{\infty} \lambda_k^2 q_{\alpha_{\mathrm{pred}}}(\lambda_k)^4}\right)^{1/2} && \\
& \le C_6 \sigma^2\left(\E{\Psi_2\left(C_q \sum_{k = 1}^{\infty} s_{\alpha_{\mathrm{pred}}}(\lambda_k)^2\right)}\right)^{1/2} && \text{[by Lemma~\ref{compare}]} \\
& \le C_6 \sigma^2\left(\E{\Psi_2\left(C_q \frac{r(\alpha_{\mathrm{pred}}, f)}{\sigma^2}\right)}\right)^{1/2}  &&  \text{[by~\eqref{eq:pred_risk}]} \\
& \le C_6 \sigma^{2}\left(\Psi_2\left(2C_q\frac{r(\alpha_o, f)}{\sigma^2}\right) + C_7\right)^{1/2} && \text{[by Corollary~\ref{ctrMoment}]} \\
&\le C_6 (2C_q)^{1/2}\frac{\sigma\sqrt{r(\alpha_o, f)}}{S^{-1}\left(2 C_q \frac{r(\alpha_o, f)}{\sigma^2}\right)} + C_6(C_7)^{1/2} \sigma^2, &&
\end{align*}
{where constant $C_6$ depends only on $C_q', C_q'', C_q, C_S$, $c_q$ and the operator $T$, and constant $C_7$ is universal.}

Combining all these estimates concludes the proof.
\end{proof}

\section{Numerical simulations}\label{sec:numerics}

In this section, we will investigate the behavior of the following parameter choice methods by means of a simulation study:
\begin{enumerate}[label=(\roman*)]
\item
The oracle parameter choice $\alpha_{\mathrm{or}} = \argmin_{\alpha \in \mathcal A} \sE{\Vert \hat f_\alpha - f^\dagger\Vert_\X^2}$, which is not available in practice and evaluated here for comparison only,
\item
the a-posteriori parameter choice rule which is studied in this paper, given by  $\alpha_{\mathrm{pred}} = \argmin_{\alpha \in \mathcal A} \hat r\left(\alpha, Y\right)$ with $\hat r\left(\alpha, Y\right)$ as in \eqref{eq:risk_estimator},
\item
and the Lepski{\u\i}-type balancing principle originally introduced by \citet{l90}, and was further developed for usage in statistical inverse problems by \citet{bh05}, \citet{m06}, \citet{mp06}, and \citet{wh12}. {It consists in choosing} 
\begin{equation}\label{eq:lepskij}{
\alpha_{\mathrm{LEP}} = \max\left\{ \alpha \in \mathcal A ~\big|~ \left\Vert \hat f_{\tilde \alpha} - \hat f_{\alpha} \right\Vert_{\X} \leq 4\sigma\sqrt{\Tr\left(q_{\tilde \alpha}\left(T^*T\right)^2T^*T\right)} \text{ for all } \tilde\alpha \le \alpha, \,\tilde\alpha \in \mathcal A\right\}.}
\end{equation}
Note that the term $\sigma\sqrt{\Tr\left(q_{\alpha}\left(T^*T\right)T^*\right)}$ is in fact an estimator for the standard deviation of $\hat f_\alpha$. For an explanatory derivation of this choice we refer to \citet{m06}. Unfortunately, the computation of $\alpha_{\mathrm{LEP}}$ is expensive (see e.g. our simulations below).
\end{enumerate}

In all above methods, {for the computational purpose, we consider a discretized version of $\mathcal A$ instead by} 
\begin{equation}\label{eq:Ar}
\mathcal A_r = \left\{\sigma^2 \cdot r^k~\big|~ k=0,1,..., \left\lfloor\left(\log\left(r\right)\right)^{-1}\log\left(\sigma^{-2}\left\Vert T^*T\right\Vert\right)\right\rfloor\right\} 
\end{equation}
for some $r>1$, i.e. it discretizes the range of possible $\alpha$'s $\left[\sigma^2, \left\Vert T^*T\right\Vert\right]$ in a logarithmically equispaced way. In our simulations we use $r = 1.2$. We also tried different values of $r$ which did not influence the results significantly. Note that it can readily be seen from the error decomposition \eqref{eq:pred_risk} that the discrete parameter set $\mathcal A_r$ is -- under appropriate conditions on the filter which are satisfied by all filters in Table~\ref{tab:example} -- able to resemble the optimal behavior of a continuous parameter set $\sigma^2 \leq \alpha < \infty$ up to a constant depending on $r$.

Let us briefly comment on the implementation of the parameter choice rule $\alpha_{\mathrm{pred}}$. Even though the minimization is not performed over the continuum $\alpha \in \mathcal A$ but over a discrete set {$\mathcal A_r$} here, the computation of $\alpha_{\mathrm{pred}}$ can be numerically challenging. In Figure \ref{fig:score} we depict the function $\alpha \mapsto \hat r \left(\alpha, Y\right)$ in an example using Tikhonov regularization, which shows that the function is {relatively} flat around its absolute minimum. We observed this behavior in many situations, especially if $\sigma$ is not too small. {However, around the minimum the function is not completely flat, and the minimum seems well-defined as visible in the zooms of Figure \ref{fig:score}. This ensures that we will be able to find the minimum up to a discretization error determined by the value $r$ in \eqref{eq:Ar}.} Finally we mention that the evaluation of the trace operator in $\hat r \left(\alpha, Y\right)$ can be expensive, but this can be overcome by different techniques, cf.~\citet[Section~9.4]{ehn96} or \citet[Section~7.1]{v02}. 

\begin{figure}[!htb]
\setlength\fheight{8cm} \setlength\fwidth{12cm}
\centering
\footnotesize

\input{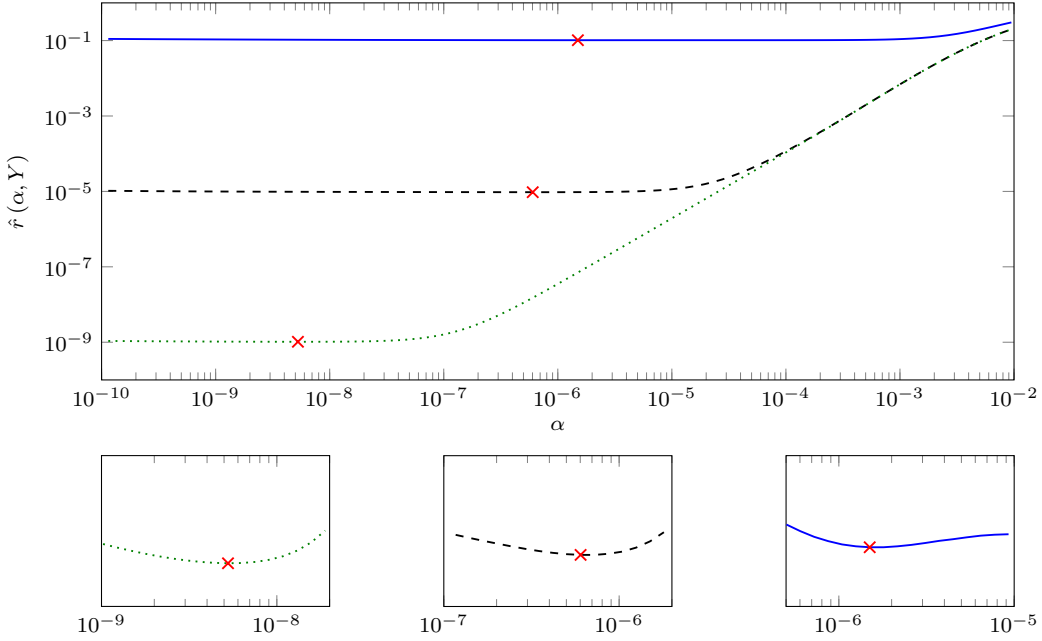}

\caption{The function $\alpha \mapsto \hat r \left(\alpha, Y\right)$ for different noise levels{: $\sigma = 10^{-2}$ (\ref{s02}), $\sigma = 10^{-4}$ (\ref{s04}), $\sigma = 10^{-6}$ (\ref{s06}). We also depict zooms (not to scale) of the regions around the actual minima, which are always marked by red crosses.} The operator $T$ is as in Section \ref{sec:numerics}.1, and $f$ is as in Example \ref{ex:1}. The chosen regularization method is Tikhonov regularization.}
\label{fig:score}
\end{figure}

\subsection{Convergence rates}

At first we investigate the empirical rate of convergence in a mildly ill-posed situation. Therefore, we consider the linear integral operator $T : \mathbf L^2 \left(\left[0,1\right]\right) \to \mathbf L^2 \left(\left[0,1\right]\right)$ defined by
\begin{equation*}
\left(Tf\right) \left(x\right) = \int_0^1 k\left(x,y\right) f\left(y\right) \,\mathrm d y, \qquad x \in \left[0,1\right]
\end{equation*}
with kernel $k\left(x,y\right) = \min\left\{x\cdot\left(1-y\right), y \cdot \left(1-x\right)\right\}, x,y \in \left[0,1\right]$, i.e. $\left(Tf\right)'' = -f$ for all $f \in \mathbf L^2 \left( \left[0,1\right] \right)$. Obviously, the eigenvalues $\lambda_k$ of $T^*T$ satisfy $\lambda_k \sim k^{-4}$. 

We discretize $T$ by choosing equidistant points $x_1 = \frac{1}{2n}, x_2 = \frac{3}{2n}, \dots, x_n = \frac{2n-1}{2n}$ and using the composite midpoint rule
\[
\left(Tf\right)\left(x\right) = \int_0^1 k\left(x,y\right) f\left(y\right) \,\mathrm d y \approx \frac{1}{n} \sum_{i=1}^n k\left(x, x_i\right) f\left(x_i\right)
\]
on the grid points $x = x_j$, $1 \leq j \leq n$. To avoid an inverse crime, the exact data $\gdag$ is always calculated analytically. {The discretization parameter $n$ is set to 1024.}

We consider two different scenarios varying in the smoothness of the unknown solution $f$:
\begin{example}\label{ex:1}
As the first example, we consider the continuous function
\[
f\left(x\right) = \begin{cases} x & \text{if }0 \leq x \leq \frac12,\\ 1-x & \text{if } \frac12 \leq x \leq 1.\end{cases}
\]
It can readily be seen by straightforward computations that the Fourier coefficients $f_k$ of $f$ are given by
\[
f_k = \frac{\left(-1\right)^k - 1}{4 \pi^3 k^2}.
\]
Consequently $f \in \mathcal{S}_{3-\varepsilon}$ and we obtain $\mathcal O \bigl(\sigma^{\frac34-\varepsilon}\bigr)$ as rate of convergence for any $\varepsilon>0$, see Corollary~\ref{order_optimal}.
\end{example}
\begin{example}\label{ex:2}
In the second example we choose
\[
f\left(x\right) = \begin{cases} 1 & \text{if }\frac14 \leq x \leq \frac34,\\ 0 & \text{else.} \end{cases}
\]
As this function can be written as the derivative of functions as in the first example, it is clear that $f \in H^{\frac12-\varepsilon} \left(\left[0,1\right]\right)$ for any $\varepsilon>0$. This is also evident by the fact that the Fourier coefficients $f_k$ of $f$ are given by
\[
f_k = \frac{\left(-1\right)^k \sin\left(\frac{\pi k}{2}\right)}{2 \pi^2 k}.
\]
Consequently, $f \in \mathcal{S}_{1-\varepsilon}$ and we obtain $\mathcal O \bigl(\sigma^{\frac13-\varepsilon}\bigr)$ as rate of convergence for any $\varepsilon>0$, see Corollary~\ref{order_optimal}.
\end{example}
\begin{remark}\label{rem:opt_rate}{
In the present setting, it follows from results by \citet{hw17} that one can obtain a rate of convergence $\mathcal O \left(\sigma^p\right)$ for a function $f$ if and only if $f \in B_{2,\infty}^{\frac{5p}{4-2p}}$ the $L^2$-based Besov-space $B_{2,\infty}^s$ with smoothness index $s$ and fine index $\infty$. Consequently, in the above mentioned Examples \ref{ex:1} and \ref{ex:2}, one finds that the minimax rates of convergence are $\mathcal O\bigl(\sigma^{\frac34}\bigr)$ and $\mathcal O \bigl(\sigma^{\frac13}\bigr)$ respectively.}
\end{remark}

In Figure \ref{fig:rate} we plot several empirical risks against the noise level $\sigma \in \left\{2^{-15}, ..., 2^{-25}\right\}$. The optimal rate of convergence {taking into account Remark \ref{rem:opt_rate}} is also indicated. We consider spectral cut-off (cf.~Figure~\ref{fig:rate}~(a)), Tikhonov regularization (cf.~Figure~\ref{fig:rate}~(b)), and Showalter regularization (cf.~Figure~\ref{fig:rate} (c)). Using Monte Carlo simulations with $10^4$ experiments per noise level we compute empirical versions of the oracle risk $R_{\mathrm{or}} \left(\sigma\right) \coloneqq \sE{\Vert \hat f_{\alpha_{\mathrm{or}}} - f^\dagger\Vert_2^2}$, the prediction risk $R_{\mathrm{pred}} \left(\sigma\right) \coloneqq \sE{\Vert \hat f_{\alpha_{\mathrm{pred}}} - f^\dagger\Vert_2^2}$, and the Lepski{\u\i} risk $R_{\mathrm{LEP}} \left(\sigma\right) \coloneqq \sE{\Vert \hat f_{\alpha_{\mathrm{LEP}}} - f^\dagger\Vert_2^2}$. 

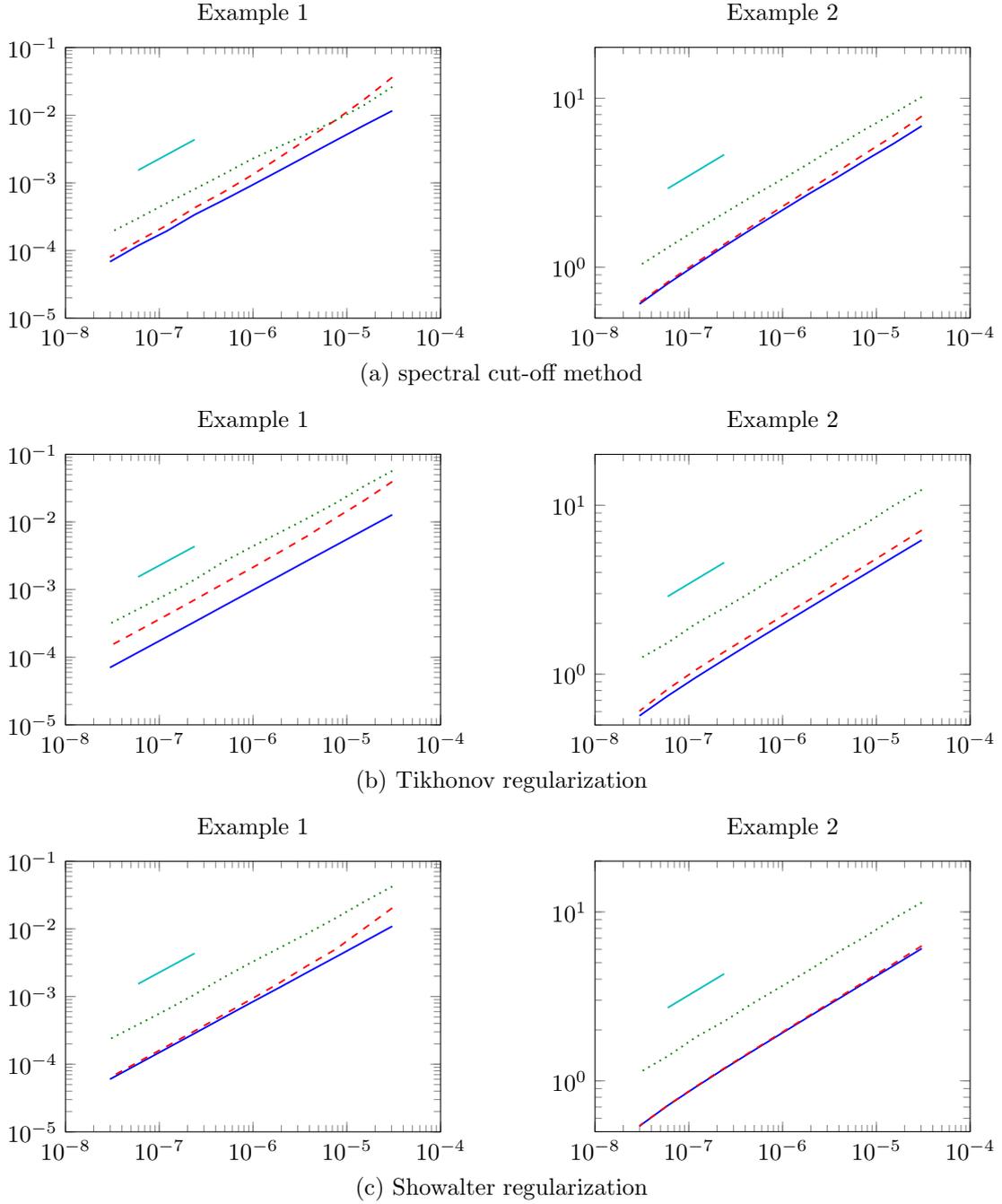
\begin{figure}[!htb]
\setlength\fheight{4cm} \setlength\fwidth{5.5cm} 
\centering
\begin{tikzpicture}[baseline]

\begin{axis}[%
width=\fwidth,
height=\fheight,
scale only axis,
xmode=log,
xmin=1e-08,
xmax=0.0001,
xminorticks=true,
ymode=log,
ymin=1e-05,
ymax=0.1,
yminorticks=true,
title={Example 1}
]
\addplot [color=blue,solid,line width = 0.25mm]
  table[row sep=crcr]{%
3.0517578125e-05	0.0116193389742968\\
1.52587890625e-05	0.00708316700922466\\
7.62939453125e-06	0.00424767729996574\\
3.814697265625e-06	0.00253857221030887\\
1.9073486328125e-06	0.00151599141912138\\
9.5367431640625e-07	0.000909108498366782\\
4.76837158203125e-07	0.000550047354042723\\
2.38418579101562e-07	0.000338714905681675\\
1.19209289550781e-07	0.000192760465580052\\
5.96046447753906e-08	0.000118430776378717\\
2.98023223876953e-08	6.82432543778183e-05\\
};
\label{or};

\addplot [color=red,dashed,line width = 0.25mm]
  table[row sep=crcr]{%
3.0517578125e-05	0.036295322375574\\
1.52587890625e-05	0.0170317344303647\\
7.62939453125e-06	0.00850433003026849\\
3.814697265625e-06	0.00449290585763362\\
1.9073486328125e-06	0.00236902889972069\\
9.5367431640625e-07	0.00128196566993519\\
4.76837158203125e-07	0.000721409217938607\\
2.38418579101562e-07	0.000429248354590549\\
1.19209289550781e-07	0.000234220033181429\\
5.96046447753906e-08	0.000138268872398077\\
2.98023223876953e-08	7.96713853312672e-05\\
};
\label{erm};

\addplot [color=black!50!green,dotted,line width = 0.25mm]
  table[row sep=crcr]{%
3.0517578125e-05	0.0262581834986209\\
1.52587890625e-05	0.0140728712809486\\
7.62939453125e-06	0.0084284562357477\\
3.814697265625e-06	0.00532025244449078\\
1.9073486328125e-06	0.00345064636306847\\
9.5367431640625e-07	0.00222583130197277\\
4.76837158203125e-07	0.00132748867264298\\
2.38418579101562e-07	0.000808437746500054\\
1.19209289550781e-07	0.000498968235594273\\
5.96046447753906e-08	0.000300098315463282\\
2.98023223876953e-08	0.000183687793735211\\
};
\label{lep};

\addplot [color=mycolor1,solid,line width = 0.25mm]
  table[row sep=crcr]{%
2.38418579101562e-07	0.00435317938454995\\
1.19209289550781e-07	0.00258841594849498\\
5.96046447753906e-08	0.00153908133126838\\
};
\label{opt};

\end{axis}
\end{tikzpicture}

\begin{axis}[%
width=\fwidth,
height=\fheight,
scale only axis,
xmode=log,
xmin=1e-08,
xmax=0.0001,
xminorticks=true,
ymode=log,
ymin=0.5,
ymax=20,
yminorticks=true,
title={Example 2}
]
\addplot [color=blue,solid,line width = 0.25mm]
  table[row sep=crcr]{%
3.0517578125e-05	6.85632202857599\\
1.52587890625e-05	5.38526473677365\\
7.62939453125e-06	4.29381892111309\\
3.814697265625e-06	3.39583011216897\\
1.9073486328125e-06	2.71538438714287\\
9.5367431640625e-07	2.14324932834216\\
4.76837158203125e-07	1.69139192112961\\
2.38418579101562e-07	1.32604791497141\\
1.19209289550781e-07	1.03168343986473\\
5.96046447753906e-08	0.798161546754232\\
2.98023223876953e-08	0.605165803560897\\
};

\addplot [color=red,dashed,line width = 0.25mm]
  table[row sep=crcr]{%
3.0517578125e-05	7.8297896465453\\
1.52587890625e-05	5.99505234763416\\
7.62939453125e-06	4.71018520743342\\
3.814697265625e-06	3.66335999104812\\
1.9073486328125e-06	2.87864908592692\\
9.5367431640625e-07	2.24841064974715\\
4.76837158203125e-07	1.7550433229596\\
2.38418579101562e-07	1.36497035908165\\
1.19209289550781e-07	1.06025081861093\\
5.96046447753906e-08	0.815190792716937\\
2.98023223876953e-08	0.616983323680394\\
};

\addplot [color=black!50!green,dotted,line width = 0.25mm]
  table[row sep=crcr]{%
3.0517578125e-05	10.1801781280545\\
1.52587890625e-05	8.13263806653979\\
7.62939453125e-06	6.56214084749959\\
3.814697265625e-06	5.21507103229571\\
1.9073486328125e-06	4.10781587717418\\
9.5367431640625e-07	3.27356122319601\\
4.76837158203125e-07	2.62762167705237\\
2.38418579101562e-07	2.08621742476551\\
1.19209289550781e-07	1.65878555398515\\
5.96046447753906e-08	1.3040081629473\\
2.98023223876953e-08	1.02134982133825\\
};

\addplot [color=mycolor1,solid,line width = 0.25mm]
  table[row sep=crcr]{%
2.38418579101562e-07	4.6362414217978\\
1.19209289550781e-07	3.67978725507018\\
5.96046447753906e-08	2.9206490798588\\
};

\end{axis}
\end{tikzpicture}
(a) spectral cut-off method \\[0.2cm]
\begin{tikzpicture}[baseline]

\begin{axis}[%
width=\fwidth,
height=\fheight,
scale only axis,
xmode=log,
xmin=1e-08,
xmax=0.0001,
xminorticks=true,
ymode=log,
ymin=1e-05,
ymax=0.1,
yminorticks=true,
title={Example 1}
]
\addplot [color=blue,solid,line width = 0.25mm]
  table[row sep=crcr]{%
3.0517578125e-05	0.012752223955773\\
1.52587890625e-05	0.00758614106064272\\
7.62939453125e-06	0.00450566962327946\\
3.814697265625e-06	0.00268391092295185\\
1.9073486328125e-06	0.00159066089581221\\
9.5367431640625e-07	0.000947823784740438\\
4.76837158203125e-07	0.00056351377018105\\
2.38418579101562e-07	0.000333344599470225\\
1.19209289550781e-07	0.000198271873309025\\
5.96046447753906e-08	0.000118194787219114\\
2.98023223876953e-08	7.02825563538516e-05\\
};

\addplot [color=red,dashed,line width = 0.25mm]
  table[row sep=crcr]{%
3.0517578125e-05	0.039463760507173\\
1.52587890625e-05	0.0206713712434543\\
7.62939453125e-06	0.0114634281050758\\
3.814697265625e-06	0.00621910473172134\\
1.9073486328125e-06	0.00356108653540911\\
9.5367431640625e-07	0.00205425466513262\\
4.76837158203125e-07	0.00121446374083439\\
2.38418579101562e-07	0.000704741449407461\\
1.19209289550781e-07	0.000416006122526433\\
5.96046447753906e-08	0.00024650141976843\\
2.98023223876953e-08	0.000146746381609933\\
};

\addplot [color=black!50!green,dotted,line width = 0.25mm]
  table[row sep=crcr]{%
3.0517578125e-05	0.0568557212360874\\
1.52587890625e-05	0.0337216972197594\\
7.62939453125e-06	0.0190842317386482\\
3.814697265625e-06	0.0113537945998641\\
1.9073486328125e-06	0.00693048859925879\\
9.5367431640625e-07	0.00423257527694458\\
4.76837158203125e-07	0.00254426143798829\\
2.38418579101562e-07	0.0014007814270812\\
1.19209289550781e-07	0.000841256017023017\\
5.96046447753906e-08	0.000513589291904302\\
2.98023223876953e-08	0.000313451227389835\\
};

\addplot [color=mycolor1,solid,line width = 0.25mm]
  table[row sep=crcr]{%
2.38418579101562e-07	0.00435347138383379\\
1.19209289550781e-07	0.00258858957230794\\
5.96046447753906e-08	0.00153918456860523\\
};

\end{axis}
\end{tikzpicture}

\begin{axis}[%
width=\fwidth,
height=\fheight,
scale only axis,
xmode=log,
xmin=1e-08,
xmax=0.0001,
xminorticks=true,
ymode=log,
ymin=0.5,
ymax=20,
yminorticks=true,
title={Example 2}
]
\addplot [color=blue,solid,line width = 0.25mm]
  table[row sep=crcr]{%
3.0517578125e-05	6.21588218391905\\
1.52587890625e-05	4.93916425961745\\
7.62939453125e-06	3.91170432508691\\
3.814697265625e-06	3.11077854510771\\
1.9073486328125e-06	2.46244573581536\\
9.5367431640625e-07	1.95105166343526\\
4.76837158203125e-07	1.54471309745981\\
2.38418579101562e-07	1.21772220877077\\
1.19209289550781e-07	0.956600936612419\\
5.96046447753906e-08	0.743465659541643\\
2.98023223876953e-08	0.566708780673558\\
};

\addplot [color=red,dashed,line width = 0.25mm]
  table[row sep=crcr]{%
3.0517578125e-05	7.11435834867334\\
1.52587890625e-05	5.58768242386655\\
7.62939453125e-06	4.39616350490106\\
3.814697265625e-06	3.47596854172589\\
1.9073486328125e-06	2.7441826853234\\
9.5367431640625e-07	2.17488163461724\\
4.76837158203125e-07	1.71729371688973\\
2.38418579101562e-07	1.35475631128682\\
1.19209289550781e-07	1.06068344894668\\
5.96046447753906e-08	0.815032210604854\\
2.98023223876953e-08	0.604213488579882\\
};

\addplot [color=black!50!green,dotted,line width = 0.25mm]
  table[row sep=crcr]{%
3.0517578125e-05	12.3483083310113\\
1.52587890625e-05	9.95133940557068\\
7.62939453125e-06	7.752363348669\\
3.814697265625e-06	6.27199786411406\\
1.9073486328125e-06	4.8710848963765\\
9.5367431640625e-07	3.93558349159953\\
4.76837158203125e-07	3.08553836076677\\
2.38418579101562e-07	2.46290336739179\\
1.19209289550781e-07	1.98934494656004\\
5.96046447753906e-08	1.53272012431252\\
2.98023223876953e-08	1.232445448962\\
};

\addplot [color=mycolor1,solid,line width = 0.25mm]
  table[row sep=crcr]{%
2.38418579101562e-07	4.57741379232898\\
1.19209289550781e-07	3.63309573461838\\
5.96046447753906e-08	2.8835899955172\\
};

\end{axis}
\end{tikzpicture}
(b) Tikhonov regularization \\[0.2cm]
\begin{tikzpicture}[baseline]

\begin{axis}[%
width=\fwidth,
height=\fheight,
scale only axis,
xmode=log,
xmin=1e-08,
xmax=0.0001,
xminorticks=true,
ymode=log,
ymin=1e-05,
ymax=0.1,
yminorticks=true,
title={Example 1}
]
\addplot [color=blue,solid,line width = 0.25mm]
  table[row sep=crcr]{%
3.0517578125e-05	0.010941761634159\\
1.52587890625e-05	0.006481896048918\\
7.62939453125e-06	0.00384410626502731\\
3.814697265625e-06	0.00229301239634098\\
1.9073486328125e-06	0.00135662076054934\\
9.5367431640625e-07	0.000812662497840448\\
4.76837158203125e-07	0.000482592534903115\\
2.38418579101562e-07	0.000285791915004106\\
1.19209289550781e-07	0.000169941103964626\\
5.96046447753906e-08	0.000100895249444331\\
2.98023223876953e-08	6.00136650888221e-05\\
};

\addplot [color=red,dashed,line width = 0.25mm]
  table[row sep=crcr]{%
3.0517578125e-05	0.0203796544183298\\
1.52587890625e-05	0.0101021492936363\\
7.62939453125e-06	0.00501217009247393\\
3.814697265625e-06	0.00287311251850359\\
1.9073486328125e-06	0.00158759659231561\\
9.5367431640625e-07	0.000911721254992627\\
4.76837158203125e-07	0.000532151091408848\\
2.38418579101562e-07	0.000309941674060695\\
1.19209289550781e-07	0.000182572049238022\\
5.96046447753906e-08	0.000107786349844247\\
2.98023223876953e-08	6.35828956200919e-05\\
};

\addplot [color=black!50!green,dotted,line width = 0.25mm]
  table[row sep=crcr]{%
3.0517578125e-05	0.0423874442555184\\
1.52587890625e-05	0.0251938195020625\\
7.62939453125e-06	0.0144795465027806\\
3.814697265625e-06	0.00859946137664919\\
1.9073486328125e-06	0.00521782494248718\\
9.5367431640625e-07	0.00317882807773154\\
4.76837158203125e-07	0.00189150226325581\\
2.38418579101562e-07	0.00107143310476553\\
1.19209289550781e-07	0.000633574379726031\\
5.96046447753906e-08	0.000386371696854129\\
2.98023223876953e-08	0.000235730838152784\\
};

\addplot [color=mycolor1,solid,line width = 0.25mm]
  table[row sep=crcr]{%
2.38418579101562e-07	0.0043531093490316\\
1.19209289550781e-07	0.00258837430512662\\
5.96046447753906e-08	0.0015390565699734\\
};

\end{axis}
\end{tikzpicture}

\begin{axis}[%
width=\fwidth,
height=\fheight,
scale only axis,
xmode=log,
xmin=1e-08,
xmax=0.0001,
xminorticks=true,
ymode=log,
ymin=0.5,
ymax=20,
yminorticks=true,
title={Example 2}
]
\addplot [color=blue,solid,line width = 0.25mm]
  table[row sep=crcr]{%
3.0517578125e-05	6.07076061438365\\
1.52587890625e-05	4.80392388969354\\
7.62939453125e-06	3.81846008509016\\
3.814697265625e-06	3.02254462319572\\
1.9073486328125e-06	2.39788542918086\\
9.5367431640625e-07	1.89636279871376\\
4.76837158203125e-07	1.49711731636695\\
2.38418579101562e-07	1.17903177724544\\
1.19209289550781e-07	0.92130458972445\\
5.96046447753906e-08	0.71407061105142\\
2.98023223876953e-08	0.540055662413382\\
};

\addplot [color=red,dashed,line width = 0.25mm]
  table[row sep=crcr]{%
3.0517578125e-05	6.29644960305893\\
1.52587890625e-05	4.93610214501514\\
7.62939453125e-06	3.89393366370287\\
3.814697265625e-06	3.07172313727599\\
1.9073486328125e-06	2.42672830946249\\
9.5367431640625e-07	1.9157981750536\\
4.76837158203125e-07	1.50925054028718\\
2.38418579101562e-07	1.18664052704192\\
1.19209289550781e-07	0.926135646433523\\
5.96046447753906e-08	0.715582631125816\\
2.98023223876953e-08	0.54151948746001\\
};

\addplot [color=black!50!green,dotted,line width = 0.25mm]
  table[row sep=crcr]{%
3.0517578125e-05	11.3663265339115\\
1.52587890625e-05	9.1212145117221\\
7.62939453125e-06	7.14742283241441\\
3.814697265625e-06	5.73997758533615\\
1.9073486328125e-06	4.50777823708366\\
9.5367431640625e-07	3.59968291749578\\
4.76837158203125e-07	2.8696168345186\\
2.38418579101562e-07	2.25045402666321\\
1.19209289550781e-07	1.8165557083192\\
5.96046447753906e-08	1.398812113693\\
2.98023223876953e-08	1.12024885376775\\
};

\addplot [color=mycolor1,solid,line width = 0.25mm]
  table[row sep=crcr]{%
2.38418579101562e-07	4.2992481705473\\
1.19209289550781e-07	3.41231553429957\\
5.96046447753906e-08	2.70835663439729\\
};

\end{axis}
\end{tikzpicture}
(c) Showalter regularization \\[0.2cm]
\caption{{Simulation results for various regularization methods: the oracle risk $R_{\mathrm{or}}$ (\ref{or}), the prediction risk $R_{\mathrm{prod}}$ (\ref{erm}), the Lepski{\u\i} risk $R_{\mathrm{LEP}}$ (\ref{lep}), and the optimal rates of convergence $\sigma^{3/4}$ and $\sigma^{1/3}$ respectively (\ref{opt}) as functions of $\sigma$ for the two considered examples.}}
\label{fig:rate}
\end{figure}

In all plots we find a good agreement of our theoretical predictions and the empirical results. Compared with the Lepski{\u\i}-type balancing principle, it seems that $\alpha_{\mathrm{pred}}$ performs order-optimal with a slightly smaller constant. The loss of a log-factor by using $\alpha_{\mathrm{LEP}}$ cannot be visible in such a small simulation study. {We furthermore estimated the empirical rates from the simulations depicted in Figure \ref{fig:rate} and compared them by means of statistical testing with the minimax rate of convergence. In all cases, the hypothesis test described in Appendix \ref{app:test} accepts the hypothesis that the empirical prediction risk rate is at least the minimax rate with significance level 10\%. In view of Remark \ref{rem:opt_rate}, a faster rate of convergence is impossible. From this point of view, our simulations strongly support the theory.}

\subsection{Efficiency simulations}

Besides the convergence rate simulations above we also want to numerically infer on the constant in the oracle inequality which will be done by efficiency simulations. Therefore \citep[inspired by][]{bl11,cg14} we consider the following setup. The forward operator is a $300 \times 300$ diagonal matrix with singular values $\lambda\left(k\right) = k^{-a}$ with a fixed parameter $a>0$. Then we repeat the following experiment $10^4$ times : Given a parameter $\nu$ we generate a random ground truth $f \in \mathbb R^{300}$ by $f(k)  = \pm k^{-\nu} \cdot \left(1+\mathcal N \left(0,0.1^2\right)\right)$ where the sign is independent and uniformly distributed for each component. From this ground truth, data is generated according to $Y(k) = \lambda\left(k\right) \cdot f\left(k\right) + \mathcal N \left(0,\sigma^2\right)$ where the noise is again independent in each component. Based on the data we compute empirical versions of the oracle risk $R_{\mathrm{or}} \left(\sigma\right)$, the prediction risk $R_{\mathrm{pred}} \left(\sigma\right)$, and the Lepski{\u\i} risk $R_{\mathrm{LEP}} \left(\sigma\right)$ for Tikhonov regularization. In Figure~\ref{fig:efficiencies} we depict the fractions of the oracle risk with the different a-posteriori risks for various parameters $\nu$ and $a$ to compare the average behavior of these parameter choice methods.

\begin{figure}[!htb]
\setlength\fheight{3cm} \setlength\fwidth{6cm} 
\centering
\footnotesize
\begin{tabular}{cc}
\begin{tikzpicture}[baseline]

\begin{axis}[%
width=\fwidth,
height=\fheight,
scale only axis,
xmode=log,
xmin=7e-16,
xmax=.1,
xminorticks=true,
ymode=log,
ymin=4e-5,
ymax=1.4,
yminorticks=true,
xlabel=$\sigma$,
legend pos = south west,
legend style={draw=black,fill=white,legend cell align=left}
]

\addlegendimage{empty legend}
\addlegendentry{$a = 3$, $\nu = 0.3$}

\addplot [color=black!50!green,solid,line width = 0.25mm]
  table[row sep=crcr]{%
0.1	0.00079992109991603\\
0.05	0.00126344098505539\\
0.025	0.00275924320454688\\
0.0125	0.00751773957202932\\
0.00625	0.0320755909854114\\
0.003125	0.111015251979832\\
0.0015625	0.0929908030122426\\
0.00078125	0.394052731213092\\
0.000390625	0.440805821982772\\
0.0001953125	0.646553403880778\\
9.765625e-05	0.883157404909027\\
4.8828125e-05	0.905886563472991\\
2.44140625e-05	0.953329573118967\\
1.220703125e-05	0.976049310940308\\
6.103515625e-06	0.973638446062099\\
3.0517578125e-06	0.982609556378315\\
1.52587890625e-06	0.985557114410762\\
7.62939453125e-07	0.987637559045139\\
3.814697265625e-07	0.987135752068715\\
1.9073486328125e-07	0.986991552295349\\
9.5367431640625e-08	0.986927356363361\\
4.76837158203125e-08	0.985396125639521\\
2.38418579101563e-08	0.985946688070287\\
1.19209289550781e-08	0.986608897617176\\
5.96046447753906e-09	0.990635428560936\\
2.98023223876953e-09	0.996164950435706\\
1.49011611938477e-09	0.999480350783981\\
7.45058059692383e-10	0.999988720558663\\
3.72529029846191e-10	1\\
1.86264514923096e-10	1\\
9.31322574615479e-11	0.999998181426308\\
4.65661287307739e-11	0.999993441370351\\
2.3283064365387e-11	0.999999803911719\\
1.16415321826935e-11	0.999997733020674\\
5.82076609134674e-12	0.999987327521323\\
2.91038304567337e-12	0.999998826430865\\
1.45519152283669e-12	0.999999921322386\\
7.27595761418343e-13	0.999999973640147\\
3.63797880709171e-13	0.99999856835975\\
1.81898940354586e-13	0.999999200454764\\
9.09494701772928e-14	0.999999992253525\\
4.54747350886464e-14	0.999999984164226\\
2.27373675443232e-14	0.999999991164976\\
1.13686837721616e-14	0.99999999603668\\
5.6843418860808e-15	0.999999997479481\\
2.8421709430404e-15	0.999999999832502\\
1.4210854715202e-15	0.999999999512138\\
7.105427357601e-16	0.999999999999101\\
};
\label{effURE};

\addplot [color=red,dashed,line width = 0.25mm]
  table[row sep=crcr]{%
0.1	0.973274871548626\\
0.05	0.965846450509999\\
0.025	0.957140586311694\\
0.0125	0.953670261181753\\
0.00625	0.949620734475075\\
0.003125	0.94374092122508\\
0.0015625	0.937901737556751\\
0.00078125	0.931375918558587\\
0.000390625	0.923524337967541\\
0.0001953125	0.914926193724773\\
9.765625e-05	0.905019494934478\\
4.8828125e-05	0.893418686831241\\
2.44140625e-05	0.880335520070768\\
1.220703125e-05	0.865351799921892\\
6.103515625e-06	0.84766027089686\\
3.0517578125e-06	0.825980449361132\\
1.52587890625e-06	0.801299947800063\\
7.62939453125e-07	0.769369376275098\\
3.814697265625e-07	0.731787505148443\\
1.9073486328125e-07	0.682467304477652\\
9.5367431640625e-08	0.618983085898769\\
4.76837158203125e-08	0.536906595809099\\
2.38418579101563e-08	0.42633727383498\\
1.19209289550781e-08	0.287494279106194\\
5.96046447753906e-09	0.156855124321738\\
2.98023223876953e-09	0.0772498926274657\\
1.49011611938477e-09	0.0668424361589303\\
7.45058059692383e-10	0.0646086635357977\\
3.72529029846191e-10	0.0721514507792433\\
1.86264514923096e-10	0.0756148027294791\\
9.31322574615479e-11	0.0671089827360029\\
4.65661287307739e-11	0.0649545042454498\\
2.3283064365387e-11	0.0766173356320582\\
1.16415321826935e-11	0.0708845695173777\\
5.82076609134674e-12	0.0660373637883795\\
2.91038304567337e-12	0.0623597727169227\\
1.45519152283669e-12	0.075443170387679\\
7.27595761418343e-13	0.0754451516556204\\
3.63797880709171e-13	0.0702960935197238\\
1.81898940354586e-13	0.0658759687181566\\
9.09494701772928e-14	0.0624926342017728\\
4.54747350886464e-14	0.0750094692039215\\
2.27373675443232e-14	0.0752050343733838\\
1.13686837721616e-14	0.0704143345722073\\
5.6843418860808e-15	0.0657078474912394\\
2.8421709430404e-15	0.0624343778632826\\
1.4210854715202e-15	0.0743951985863337\\
7.105427357601e-16	0.0756845586467583\\
};
\label{effLEP};

\end{axis}
\end{tikzpicture}

\begin{axis}[%
width=\fwidth,
height=\fheight,
scale only axis,
xmode=log,
xmin=7e-16,
xmax=.1,
xminorticks=true,
ymode=log,
ymin=4e-5,
ymax=1.4,
yminorticks=true,
xlabel=$\sigma$,
legend pos = south west,
legend style={draw=black,fill=white,legend cell align=left}
]

\addlegendimage{empty legend}
\addlegendentry{$a = 4$, $\nu = 0.4$}

\addplot [color=black!50!green,solid,line width = 0.25mm]
  table[row sep=crcr]{%
0.1	0.000217870694437914\\
0.05	0.000199749471058759\\
0.025	0.000215695006741389\\
0.0125	0.000332145307236776\\
0.00625	0.000348669263440446\\
0.003125	0.000554347362599421\\
0.0015625	0.000627894726265306\\
0.00078125	0.00146490232411415\\
0.000390625	0.00187711600373313\\
0.0001953125	0.00554072333153646\\
9.765625e-05	0.0254902051489822\\
4.8828125e-05	0.0307633003505766\\
2.44140625e-05	0.0312298100511064\\
1.220703125e-05	0.176446922856726\\
6.103515625e-06	0.292048435527476\\
3.0517578125e-06	0.62602675114639\\
1.52587890625e-06	0.653918121028067\\
7.62939453125e-07	0.827433667672825\\
3.814697265625e-07	0.873400842383183\\
1.9073486328125e-07	0.949925082956498\\
9.5367431640625e-08	0.960123266322171\\
4.76837158203125e-08	0.973309350298982\\
2.38418579101563e-08	0.980699057388029\\
1.19209289550781e-08	0.973452309073184\\
5.96046447753906e-09	0.98550333388483\\
2.98023223876953e-09	0.985649482661901\\
1.49011611938477e-09	0.986783680947811\\
7.45058059692383e-10	0.987120875544788\\
3.72529029846191e-10	0.985796299774388\\
1.86264514923096e-10	0.985796832653492\\
9.31322574615479e-11	0.983951821456329\\
4.65661287307739e-11	0.983002758596601\\
2.3283064365387e-11	0.984344222758541\\
1.16415321826935e-11	0.989052059029784\\
5.82076609134674e-12	0.99460150981328\\
2.91038304567337e-12	0.998489654778947\\
1.45519152283669e-12	0.999661851508232\\
7.27595761418343e-13	0.999943932392162\\
3.63797880709171e-13	0.999987394524477\\
1.81898940354586e-13	0.999998098766157\\
9.09494701772928e-14	0.99999823348261\\
4.54747350886464e-14	0.99999968248804\\
2.27373675443232e-14	0.999999867168774\\
1.13686837721616e-14	0.999993250613741\\
5.6843418860808e-15	0.999999986691467\\
2.8421709430404e-15	0.999998408359007\\
1.4210854715202e-15	0.999999944736761\\
7.105427357601e-16	0.999999863145915\\
};

\addplot [color=red,dashed,line width = 0.25mm]
  table[row sep=crcr]{%
0.1	0.969272986843605\\
0.05	0.965685951709997\\
0.025	0.951112086108039\\
0.0125	0.938931652386618\\
0.00625	0.9425946182325\\
0.003125	0.941555487973478\\
0.0015625	0.935146285707152\\
0.00078125	0.932745427194161\\
0.000390625	0.928704236915513\\
0.0001953125	0.924760872456973\\
9.765625e-05	0.920528500951787\\
4.8828125e-05	0.915888300387574\\
2.44140625e-05	0.910531689711035\\
1.220703125e-05	0.904520967981245\\
6.103515625e-06	0.898363284269454\\
3.0517578125e-06	0.891807525810973\\
1.52587890625e-06	0.884486038960839\\
7.62939453125e-07	0.876393659345058\\
3.814697265625e-07	0.866829542231259\\
1.9073486328125e-07	0.856955336855604\\
9.5367431640625e-08	0.845519035101537\\
4.76837158203125e-08	0.831685079393081\\
2.38418579101563e-08	0.816506663851713\\
1.19209289550781e-08	0.799530514210783\\
5.96046447753906e-09	0.778719445373443\\
2.98023223876953e-09	0.753737143035095\\
1.49011611938477e-09	0.724837568163652\\
7.45058059692383e-10	0.689443527126648\\
3.72529029846191e-10	0.643915586087953\\
1.86264514923096e-10	0.585290047621601\\
9.31322574615479e-11	0.510664125010529\\
4.65661287307739e-11	0.409642212693875\\
2.3283064365387e-11	0.281478822164994\\
1.16415321826935e-11	0.15791891808426\\
5.82076609134674e-12	0.0785954125226441\\
2.91038304567337e-12	0.0664119654086447\\
1.45519152283669e-12	0.0666940727978692\\
7.27595761418343e-13	0.0736683394612687\\
3.63797880709171e-13	0.0687766844348266\\
1.81898940354586e-13	0.0694594956571027\\
9.09494701772928e-14	0.0628478560376616\\
4.54747350886464e-14	0.0760700274933453\\
2.27373675443232e-14	0.0735735347263426\\
1.13686837721616e-14	0.0681285932564515\\
5.6843418860808e-15	0.0637046722751735\\
2.8421709430404e-15	0.0670866024302506\\
1.4210854715202e-15	0.0769842688334462\\
7.105427357601e-16	0.0726262729275403\\
};

\end{axis}
\end{tikzpicture}
\begin{tikzpicture}[baseline]

\begin{axis}[%
width=\fwidth,
height=\fheight,
scale only axis,
xmode=log,
xmin=7e-16,
xmax=.1,
xminorticks=true,
ymode=log,
ymin=4e-5,
ymax=1.4,
yminorticks=true,
xlabel=$\sigma$,
legend pos = south west,
legend style={draw=black,fill=white,legend cell align=left}
]

\addlegendimage{empty legend}
\addlegendentry{$a = 5$, $\nu = 0.5$}

\addplot [color=black!50!green,solid,line width = 0.25mm]
  table[row sep=crcr]{%
0.1	8.88373561265219e-05\\
0.05	8.90835348544582e-05\\
0.025	9.52338756257208e-05\\
0.0125	0.000100374683154943\\
0.00625	9.4337494295965e-05\\
0.003125	0.000128547802727255\\
0.0015625	0.00013350967616018\\
0.00078125	0.000140366296045302\\
0.000390625	0.000220233520873779\\
0.0001953125	0.000184551714111749\\
9.765625e-05	0.000389616780669605\\
4.8828125e-05	0.000578794723639811\\
2.44140625e-05	0.000655502335256382\\
1.220703125e-05	0.00213916755842192\\
6.103515625e-06	0.00251747767364116\\
3.0517578125e-06	0.00203646921104204\\
1.52587890625e-06	0.00514014559167047\\
7.62939453125e-07	0.0644324695882992\\
3.814697265625e-07	0.0250888622223242\\
1.9073486328125e-07	0.248599509064307\\
9.5367431640625e-08	0.0777810417464837\\
4.76837158203125e-08	0.343111543934266\\
2.38418579101563e-08	0.781456356418349\\
1.19209289550781e-08	0.744937518623885\\
5.96046447753906e-09	0.85231651076684\\
2.98023223876953e-09	0.827907396006283\\
1.49011611938477e-09	0.944708806941443\\
7.45058059692383e-10	0.955779257874588\\
3.72529029846191e-10	0.963052007967872\\
1.86264514923096e-10	0.968259685476684\\
9.31322574615479e-11	0.972639188128542\\
4.65661287307739e-11	0.977051286723788\\
2.3283064365387e-11	0.983209996213952\\
1.16415321826935e-11	0.984528655775079\\
5.82076609134674e-12	0.985126530888017\\
2.91038304567337e-12	0.986914878404918\\
1.45519152283669e-12	0.986337745234152\\
7.27595761418343e-13	0.984936457100161\\
3.63797880709171e-13	0.983278328146102\\
1.81898940354586e-13	0.982141290875723\\
9.09494701772928e-14	0.980765797904367\\
4.54747350886464e-14	0.983170106076827\\
2.27373675443232e-14	0.986962500584117\\
1.13686837721616e-14	0.993030155218349\\
5.6843418860808e-15	0.997007791859657\\
2.8421709430404e-15	0.998740079408409\\
1.4210854715202e-15	0.999610030061147\\
7.105427357601e-16	0.999887806586392\\
};

\addplot [color=red,dashed,line width = 0.25mm]
  table[row sep=crcr]{%
0.1	0.961173705954702\\
0.05	0.973582044126617\\
0.025	0.955566117935094\\
0.0125	0.927428707890149\\
0.00625	0.910628746338692\\
0.003125	0.927415671630454\\
0.0015625	0.931687798200974\\
0.00078125	0.920961976176126\\
0.000390625	0.915888430851344\\
0.0001953125	0.91901988059316\\
9.765625e-05	0.9132148647673\\
4.8828125e-05	0.911705437623321\\
2.44140625e-05	0.908802806536373\\
1.220703125e-05	0.906404197437698\\
6.103515625e-06	0.902826403838907\\
3.0517578125e-06	0.899707410693763\\
1.52587890625e-06	0.895809130310337\\
7.62939453125e-07	0.892167922454243\\
3.814697265625e-07	0.888362199441155\\
1.9073486328125e-07	0.883846453194289\\
9.5367431640625e-08	0.879353241719036\\
4.76837158203125e-08	0.873763788682987\\
2.38418579101563e-08	0.868150485160963\\
1.19209289550781e-08	0.862143849768256\\
5.96046447753906e-09	0.855932387977254\\
2.98023223876953e-09	0.848334813371531\\
1.49011611938477e-09	0.840934953058661\\
7.45058059692383e-10	0.831454540530133\\
3.72529029846191e-10	0.821548066653717\\
1.86264514923096e-10	0.810576787692628\\
9.31322574615479e-11	0.798209562492302\\
4.65661287307739e-11	0.783220310666861\\
2.3283064365387e-11	0.766509854846102\\
1.16415321826935e-11	0.74803328568778\\
5.82076609134674e-12	0.724002127119253\\
2.91038304567337e-12	0.696283814009001\\
1.45519152283669e-12	0.66382764205153\\
7.27595761418343e-13	0.620743788290434\\
3.63797880709171e-13	0.564833448128765\\
1.81898940354586e-13	0.496346148478381\\
9.09494701772928e-14	0.40148953917806\\
4.54747350886464e-14	0.279801738063738\\
2.27373675443232e-14	0.161738059681073\\
1.13686837721616e-14	0.0805355295604921\\
5.6843418860808e-15	0.0659192857143492\\
2.8421709430404e-15	0.0688158684232896\\
1.4210854715202e-15	0.0701768527877993\\
7.105427357601e-16	0.064618029795434\\
};

\end{axis}
\end{tikzpicture}

\begin{axis}[%
width=\fwidth,
height=\fheight,
scale only axis,
xmode=log,
xmin=7e-16,
xmax=.1,
xminorticks=true,
ymode=log,
ymin=4e-5,
ymax=1.4,
yminorticks=true,
xlabel=$\sigma$,
legend pos = south west,
legend style={draw=black,fill=white,legend cell align=left}
]

\addlegendimage{empty legend}
\addlegendentry{$a = 6$, $\nu = 0.6$}

\addplot [color=black!50!green,solid,line width = 0.25mm]
  table[row sep=crcr]{%
0.1	4.89341855928039e-05\\
0.05	5.2755068020744e-05\\
0.025	5.93925129320518e-05\\
0.0125	5.75065973906853e-05\\
0.00625	5.2053233935156e-05\\
0.003125	4.77305627153146e-05\\
0.0015625	5.4271257577693e-05\\
0.00078125	6.00638558391999e-05\\
0.000390625	5.7863446720024e-05\\
0.0001953125	6.21001090870507e-05\\
9.765625e-05	7.67387019683043e-05\\
4.8828125e-05	7.33652512862807e-05\\
2.44140625e-05	0.00010883174664322\\
1.220703125e-05	0.000101482098411269\\
6.103515625e-06	0.000145402644051001\\
3.0517578125e-06	0.000160170256066457\\
1.52587890625e-06	0.000222623264368997\\
7.62939453125e-07	0.000261901859610707\\
3.814697265625e-07	0.000328927692921563\\
1.9073486328125e-07	0.000610519001905512\\
9.5367431640625e-08	0.000520739467941081\\
4.76837158203125e-08	0.0060247614021626\\
2.38418579101563e-08	0.0070398413543147\\
1.19209289550781e-08	0.0299770593273909\\
5.96046447753906e-09	0.011935144092274\\
2.98023223876953e-09	0.0819017777247388\\
1.49011611938477e-09	0.0761155941337671\\
7.45058059692383e-10	0.35935404200348\\
3.72529029846191e-10	0.215992518108534\\
1.86264514923096e-10	0.612014343236238\\
9.31322574615479e-11	0.606740043751235\\
4.65661287307739e-11	0.805335186872598\\
2.3283064365387e-11	0.839128905459138\\
1.16415321826935e-11	0.935205048320532\\
5.82076609134674e-12	0.930166490477391\\
2.91038304567337e-12	0.946022141694621\\
1.45519152283669e-12	0.963168232298121\\
7.27595761418343e-13	0.972533797599589\\
3.63797880709171e-13	0.971597188494818\\
1.81898940354586e-13	0.97919748103919\\
9.09494701772928e-14	0.980671106643225\\
4.54747350886464e-14	0.981671530570969\\
2.27373675443232e-14	0.984222787744149\\
1.13686837721616e-14	0.984681970870864\\
5.6843418860808e-15	0.985332641288495\\
2.8421709430404e-15	0.984667644722648\\
1.4210854715202e-15	0.98402622941952\\
7.105427357601e-16	0.982980545296578\\
};

\addplot [color=red,dashed,line width = 0.25mm]
  table[row sep=crcr]{%
0.1	0.939392150698528\\
0.05	0.977383762480174\\
0.025	0.973923120745215\\
0.0125	0.940950898588446\\
0.00625	0.893336015446509\\
0.003125	0.871260461946572\\
0.0015625	0.906565479788513\\
0.00078125	0.926166073745109\\
0.000390625	0.909643388008957\\
0.0001953125	0.893124278393445\\
9.765625e-05	0.90001212425987\\
4.8828125e-05	0.903111388008282\\
2.44140625e-05	0.893865723552338\\
1.220703125e-05	0.895345050324604\\
6.103515625e-06	0.893016765778782\\
3.0517578125e-06	0.891491380010715\\
1.52587890625e-06	0.889112783896727\\
7.62939453125e-07	0.887408511101381\\
3.814697265625e-07	0.88427009823122\\
1.9073486328125e-07	0.882841739604238\\
9.5367431640625e-08	0.880092294580603\\
4.76837158203125e-08	0.877835009793476\\
2.38418579101563e-08	0.875416254082921\\
1.19209289550781e-08	0.873040985801845\\
5.96046447753906e-09	0.869306171884325\\
2.98023223876953e-09	0.866613443264651\\
1.49011611938477e-09	0.862563296157337\\
7.45058059692383e-10	0.85892013642275\\
3.72529029846191e-10	0.854654890641425\\
1.86264514923096e-10	0.850380402889722\\
9.31322574615479e-11	0.845714428430121\\
4.65661287307739e-11	0.840303490977385\\
2.3283064365387e-11	0.834367603132577\\
1.16415321826935e-11	0.828471082501453\\
5.82076609134674e-12	0.821676893006913\\
2.91038304567337e-12	0.814145924685415\\
1.45519152283669e-12	0.805896149239956\\
7.27595761418343e-13	0.796317656349031\\
3.63797880709171e-13	0.785702169662654\\
1.81898940354586e-13	0.773815235379859\\
9.09494701772928e-14	0.759722225832337\\
4.54747350886464e-14	0.743863358099758\\
2.27373675443232e-14	0.725333433397638\\
1.13686837721616e-14	0.704393959898773\\
5.6843418860808e-15	0.676751622772579\\
2.8421709430404e-15	0.645675318949074\\
1.4210854715202e-15	0.607507497481264\\
7.105427357601e-16	0.5527966033432\\
};

\end{axis}
\end{tikzpicture}%
\end{tabular}
\caption{Efficiency simulations for Tikhonov regularization with different smoothness parameters $a$ and $\nu$: $R_{\mathrm{or}}/R_{\mathrm{pred}}$ (\ref{effURE}), $R_{\mathrm{or}}/R_{\mathrm{LEP}}$ (\ref{effLEP}).}
\label{fig:efficiencies}
\end{figure}
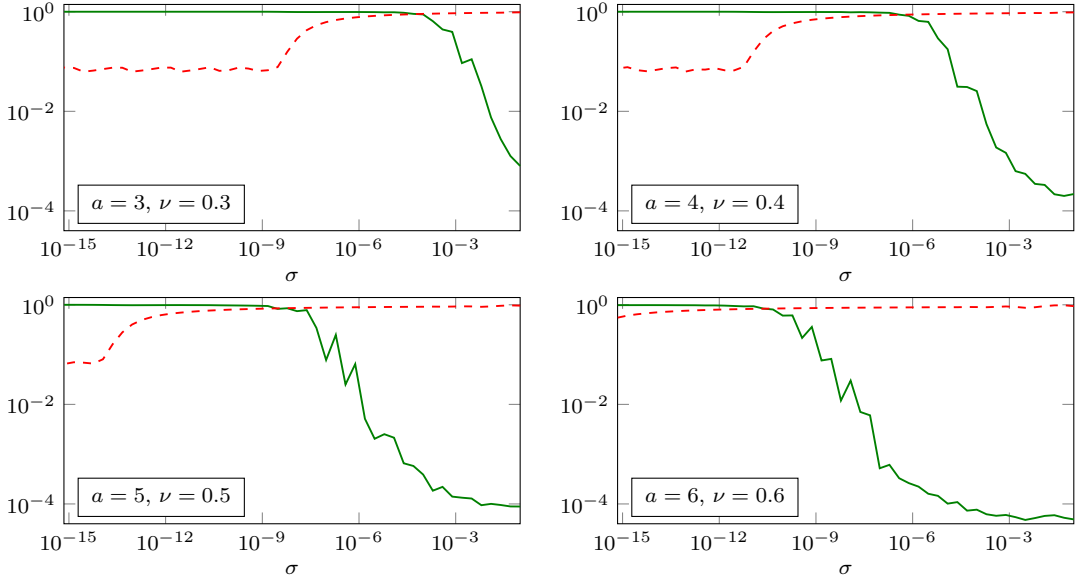

In conclusion we empirically find that both choices $\alpha_{\mathrm{pred}}$ and $\alpha_{\mathrm{LEP}}$ seem to satisfy an oracle inequality. Comparing the performance of $\alpha_{\mathrm{pred}}$ and $\alpha_{\mathrm{LEP}}$ it seems that $\alpha_{\mathrm{LEP}}$ behaves worse for small values of $\sigma$, which is in good agreement with Figure \ref{fig:rate}. Furthermore, the computational effort for $\alpha_{\mathrm{pred}}$ is significantly smaller: in our efficiency simulations around $90\%$ of the computation time were spent for computing $\alpha_{\mathrm{LEP}}$ in~\eqref{eq:lepskij}.

\section{Conclusion and outlook}\label{s:con_out}

In this study we have analyzed a parameter choice method for filter-based regularization methods applied to linear statistical inverse problems. Therefore we have proven an oracle inequality, which generalizes the one from~\citet{cg14} to general ordered filters satisfying weak assumptions (cf.~Definition \ref{def:order_filter} and Assumption~\ref{para_filter_ass}). From this oracle inequality we derived convergence rates of the investigated parameter choice, which are actually order optimal in a wide range of cases.

We point out that our techniques do not seem to be limited to the specific choice of $\alpha$ investigated here. Quite the contrary, we provide a general strategy to prove an oracle inequality, which might be used for other choices as well. If one would consider a different parameter choice rule, then an oracle inequality can be carried out the same way as in this study provided the following ingredients are available:
\begin{itemize}
\item general moment bounds for the prediction risk as in Corollary \ref{ctrMoment} (in our case based on the exponential bounds proven by~\citet{k94}),
\item a deterministic optimality result as in Lemma \ref{source_cond},
\item estimates for the behavior of the regularization algorithm as in Lemma \ref{compare}.
\end{itemize}
Note that the latter two assumptions do not rely on the parameter choice rule at all but only on the interplay of the operator, the regularization method, and the source condition. Consequently, whenever the general moments can be bounded, the analysis can basically be carried over from standard results, see e.g.~\citet{bt16} for a useful deviation inequality for regularization methods with convex penalties.

Even though we have always assume that the noise level $\sigma$ is known, the generalization to unknown $\sigma$ is straightforward. For the choice of parameter $\alpha_{\mathrm{pred}}$, we simply replace $\sigma$ by a proper estimator $\hat\sigma$ in~\eqref{eq:risk_estimator} and~\eqref{eq:alpha_pred}. The only affected part in our argumentation is the Kneip's deviation bound in Theorem~\ref{expBnd}, which still holds if we further assume the smoothness of $Tf$ (this is usually the case due to the blessing of ill-posedness). {More precisely, one could consider estimators of the form $\hat\sigma^2 \coloneqq \inner{Y}{\Lambda_\sigma Y}_{\Y^*\times\Y}$ for some linear operator $\Lambda_\sigma$ such that $\E{\inner{\xi}{\Lambda_\sigma \xi}_{\Y^*\times\Y}} = 1$ and $\Tr(\Lambda_\sigma^*\Lambda_\sigma) \le C < \infty$. In this case, under additional smoothness assumption that $\abs{\inner{Tf}{\Lambda_\sigma Tf}_{\Y}} \le \tilde C \sigma^{2}$, the assertion of Theorem~\ref{expBnd} still holds, with constants $C_\xi',C_\xi''$ there depending only on $C$ and $\tilde C$, see~\cite{gol11} for a possible choice of $\Lambda_\sigma$, and~Section 6 in \cite{k94} for further details.}

Another possible generalization concerns the errors in \eqref{eq:model}. If $\xi$ is such that the $\xi_k$'s in \eqref{eq:seq_model} are independent sub-Gaussian errors, then there are two crucial parts of the proofs which have to be generalized: Theorem~\ref{expBnd} and Lemma~\ref{od_process}. In fact it turns out that both also hold for independent sub-Gaussian errors~\citep[see][]{k94,g10}, so the whole analytical methodology remains valid in such a case as well.

The general analytical strategy advocated in this paper, of course, has its own limitations, as the resulting oracle inequality might turn out to be inadequate or even trivial in certain cases (see Section~\ref{ss:example_serious} for instance). {For exponentially ill-posed problems, we expect that the parameter choice rule under investigation has to be modified {suitably}, as examined in the seminal papers \citep{g04,cg06}}. Future questions include generalizations to nonlinear problems and noise models with heavier tails.

\section*{Acknowledgements}

FW wants to thank Yu.~Golubev for pointing his attention to the inspiring paper \citep{cg14} during a visit in G\"ottingen. FW gratefully acknowledges financial support by the German Research Foundation DFG through subproject A07 of CRC 755, and HL acknowledges support through RTG 2088, subproject B2, and the National Nature Science Foundation of China (61571008, 61402495). {We also thank two anonymous referees and the editors for several questions and constructive comments which helped us to improve the quality and presentation of the paper substantially.}

\appendix

\section{Properties of $\alpha_{\mathrm{pred}}$}\label{app:alpha}
{
We first show that almost surely the infimum of $\hat r(\alpha, Y)$ in~\eqref{eq:alpha_pred} over $\alpha$ in $\mathcal A$ is attainable, and such a minimizer is unique. The existence of minimizers follow immediately from the continuous dependence of $q_{\alpha}(\lambda)$ on $\alpha$ and the closedness of $\mathcal A$. For the uniqueness, we focus on the case that $Y_k \neq 0$ for every $k\in\mathbb N$, which holds with probability $1$. Define $\tilde r: \ell^\infty\to\Rset$ as 
$$
\tilde r(x) = \sum_{k=1}^\infty \lambda_k^2Y_k^2x_k^2 -2 \sum_{k=1}^\infty \lambda_kY_k^2x_k+2\sigma^2 \sum_{k=1}^\infty \lambda_k x_k\qquad \text{for every }x = \{x_k\}_{k\in \mathbb N}\in \ell^\infty.
$$
It is easy to see that $\tilde r(\cdot)$ is strictly convex. Note that $\hat r(\alpha, Y) = \tilde r(x)$ with $x = \{q_{\alpha}(\lambda_k)\}_{k \in\mathbb N}$. This, together with the fact that $q_{\alpha}$ is strictly increasing over $\alpha$, implies the uniqueness of $\argmin_{\alpha \in \mathcal A}\hat r(\alpha, Y)$. Thus, $\alpha_{\mathrm{pred}}$ is well-defined.}

{
Next we consider the measurability of $\alpha_{\mathrm{pred}}$. Due to its uniqueness, we have for any $x \in \Rset$
$$
\{\alpha_{\mathrm{pred}} < x\} = \Bigl\{\min_{\alpha \in\mathcal A}r(\alpha, Y) < \min_{\alpha \in\mathcal A,\, \alpha \ge x}r(\alpha, Y)\Bigr\} = \bigcup_{z \in \mathbb Q}\biggl(\Bigl\{\min_{\alpha \in\mathcal A}r(\alpha, Y) < z\Bigr\}\cap\Bigl\{\min_{\alpha \in\mathcal A,\, \alpha \ge x}r(\alpha, Y)> z\Bigr\}\biggr). 
$$
By the continuity of $q_{\alpha}$ with respect to $\alpha$, it holds that $\bigl\{\min_{\alpha \in\mathcal A}r(\alpha, Y) < z\bigr\}$ and $\bigl\{\min_{\alpha \in\mathcal A,\, \alpha \ge x}r(\alpha, Y)> z\bigr\}$ are measurable. Then $\{\alpha_{\mathrm{pred}} < x\}$ is measurable, and thus $\alpha_{\mathrm{pred}}$ is measurable. 
}

\section{Hypothesis testing for rates of convergence}\label{app:test}

{
In our simulations, we fix a test function $f$, select a sequence of noise levels $\{\sigma_i\,:\,i= 1, \ldots, n\}$, and for each $\sigma_i$ we compute estimators $\hat f_{\sigma_i,j}$ of $f$ from independent realizations of $Y$ for every $j=1,...,m$. This gives rise to an empirical estimate $\bar e_i \coloneqq \sum_{j = 1}^m e_{i,j}/m$ of the risk $\sE{\|\hat f_{\sigma_i} - f\|_\X^2}$ with $e_{i,j}\coloneqq \|\hat f_{\sigma_i, j} - f\|_\X^2$. To estimate the convergence order $\vartheta$ in $\sE{\|\hat f_{\sigma_i} - f\|_\X^2} \approx C\sigma^\vartheta$, we assume the model
\begin{equation}\label{eq:er}
\log \bar e_i \,=\, \vartheta \log \sigma_i + \varrho +\varepsilon_i\qquad \text{ with } \varepsilon_i \sim \mathcal N (0, \delta_i^2)\text{ independently, } i= 1,\ldots,n.
\end{equation}
Here $\vartheta$, and $\varrho$ are unknown, and we assume for the moment that standard deviations $\delta_i$ of perturbation are known in advance.  
Note that it is not possible to achieve faster convergence rates than the optimal one. In order to investigate the discrepancy between the convergence rate of $\hat f_\sigma$ and the optimal one, it is sufficient to test whether it is no slower than the optimal rate or not. To be precise, we consider the test
$$
H_0: \vartheta \ge \vartheta_o \qquad \text{ against }\qquad H_1: \vartheta < \vartheta_o,
$$  
where $\vartheta_o$ is the optimal order of convergence for test function $f$. }

{
From linear model theory~\citep[e.g.][]{NKNW96}, a classical testing statistics (based on the MLE estimator of $\vartheta_o$) for the above test is 
\begin{equation}\label{eq:proc}
\begin{aligned}
&\mathbb T \coloneqq 
(\hat \vartheta - \vartheta_o)\biggl(\frac{\bigl(\sum_{i = 1}^n\delta_i^{-2}\bigr)\bigl(\sum_{i = 1}^n\delta_i^{-2}\log^2\sigma_i\bigr) - \bigl(\sum_{i = 1}^n\delta_i^{-2}\log\sigma_i\bigr)^2}{\sum_{i = 1}^n\delta_i^{-2}}\biggr)^{1/2}\\ \text{ with }\quad &
\hat\vartheta = \frac{\bigl(\sum_{i=1}^n\delta_i^{-2}\bigr)\bigl(\sum_{i=1}^n\delta_i^{-2}\log \sigma_i \log \bar e_i\bigr)-\bigl(\sum_{i=1}^n\delta_i^{-2}\log \sigma_i\bigr)\bigl(\sum_{i=1}^n\delta_i^{-2} \log \bar e_i\bigr) }{\bigl(\sum_{i=1}^n\delta_i^{-2}\bigr)\bigl(\sum_{i=1}^n\delta_i^{-2}\log^2 \sigma_i\bigr)-\bigl(\sum_{i=1}^n\delta_i^{-2}\log \sigma_i\bigr)^2}.
\end{aligned}
\end{equation}
The corresponding rejection region of significance level $\alpha \in (0,1)$ is $R_\alpha\coloneqq\{\mathbb T < z_{1-\alpha}\}$, and the corresponding $p$-value is $\Phi(\mathbb T)$, where $z_{1-\alpha}$ and $\Phi$ are the ($1-\alpha$) quantile and the distribution function of the standard normal distribution, respectively.
In reality, the standard deviations $\delta_i$ are unknown, but can be easily estimated from the sample variance of $\{e_{i,j}\}_{j=1}^m$ by means of delta methods. More precisely, the central limit theorem implies $\sqrt{m}(\bar e_i - \sE{e_{i,j}})\stackrel{\mathcal D}{\to}\mathcal{N}(0, \Var{e_{i,j}})$, and via delta methods we obtain
$$
\sqrt{m}\left(\log{\bar e_i} - \log \E{e_{i,j}}\right)\stackrel{\mathcal{D}}{\to}\mathcal{N}\left(0,\, \E{e_{i,j}}^{-2}\Var{e_{i,j}}\right).
$$
Then $\Var{\log \bar e_i}\approx m^{-1}\E{e_{i,j}}^{-2}\Var{e_{i,j}}$. Based on such an approximation, we derive an estimator of $\delta_i$ as
$$
\hat\delta_i \coloneqq \frac{1}{\sqrt{m}\abs{\bar e_i}}\Bigl(\sum_{j = 1}^m (e_{i,j}- \bar e_i)^2\Bigr)^{1/2}.
$$}

{
The final procedure is given by~\eqref{eq:proc} with $\delta_i$ replaced by $\hat \delta_i$, which is exactly the testing procedure used in Section~\ref{sec:numerics}. We note that one can justify the model~\eqref{eq:er} and the estimation of $\delta_i$ by using a large $m$.}

\bibliography{copRef}

\end{document}